\pgfplotsset{compat=1.11}
\theoremstyle{plain}
\newtheorem{theorem}{Theorem}[section]
\newtheorem{lemma}[theorem]{Lemma}
\theoremstyle{definition}
\newtheorem{definition}[theorem]{Definition}
\newtheorem{remark}[theorem]{Remark}
\newtheorem*{theorem*}{Theorem}
\newtheorem*{remark*}{Remark}
\newtheorem*{thm*}{Theorem}
\newtheorem*{conjecture*}{Conjecture}
\newcommand{\definedas}{\mathrel{\raise.095ex\hbox{\rm :}\mkern-5.2mu=}}
\newcommand{\asdefined}{\mathrel{=\mkern-5.2mu\raise.095ex\hbox{\rm :}}}
\newcommand{\diver}{\operatorname{div}}
\newcommand{\tr}{\operatorname{tr}}
\newcommand{\Hess}{\operatorname{Hess}}
\newcommand{\diam}{\operatorname{diam}}
\newcounter{flabelcounter}
\title[]{ Local foliations by critical surfaces of the Hawking energy and small sphere limit}
\author[Pe\~nuela Diaz]{Alejandro Pe\~nuela Diaz}
\address{Max Planck Institute for Gravitational Physics, University of Potsdam, 14476 Potsdam, Germany}
\email{alejandro.penuela@aei.mpg.de}
\begin{document}
\begin{abstract}
Local foliations of area constrained Willmore surfaces on a $3$-dimensional Riemannian manifold were constructed by  Lamm, Metzger and Schulze  \cite{locwill}, and   Ikoma, Machiodi and Mondino \cite{ikoma}, the leaves of these foliations are in particular critical surfaces of the Hawking energy in case they are contained in a totally geodesic spacelike hypersurface. We generalize these foliations to the general case of a non-totally geodesic spacelike hypersurface, constructing an  unique  local foliation of area constrained critical surfaces of the Hawking energy.   A discrepancy when evaluating the so called small sphere limit of the Hawking energy was found by Friedrich  \cite{Alex}, he studied concentrations of  area constrained critical surfaces of the Hawking energy and obtained  a result that apparently differs from  the well established small sphere limit of the Hawking energy of Horowitz and Schmidt  \cite{Haw}, this small sphere limit  in principle must be satisfied by any quasi local energy.  We independently  confirm  the  discrepancy and explain the reasons for it to happen. We also prove that these surfaces are suitable to evaluate the Hawking energy in the sense of Lamm, Metzger and Schulze \cite{willflat}, and we find an indication that these surfaces may induce an excess in the energy measured.
\end{abstract}
\maketitle
\section{Introduction and Results}
The search for a quasi local energy is one of the most prominent problems in classical relativity, with many different candidates (for a detailed review of the topic see \cite{Living}). From these candidates one of the most famous is the quasi local energy described by Hawking in 1968 \cite{Hawma}, the so called Hawking energy,   given by the expression 
\begin{equation}\label{Hawkingene}
     \mathcal{E}(\Sigma) = \sqrt{\frac{|\Sigma|}{16 \pi}} \left( 1+ \frac{1}{8 \pi} \int_\Sigma \theta^+ \theta^- d\mu \right),
\end{equation}
where $\Sigma$ is a closed surface in a $4$ dimensional space time, $|\Sigma|$ is the area of the surface, and $\theta^+ \theta^-$ is the product of the null expansions $\theta^+$ and $\theta^-$.  The Hawking energy is one of the simplest quasi local energies that one can find and fulfils almost all the expected properties of a quasi local energy, however it has the inconvenience that it is not necessarily positive, there are well known examples in flat space of surfaces that give a negative Hawking energy (Hayward defined a generalization of the Hawking energy in \cite{Hayward} to address this problem. Nevertheless, we will consider Hawking's definition). Therefore it is of high importance to know which surfaces are appropriate to evaluate the Hawking energy, for instance, it  was shown by Christodoulou and Yau in \cite{Chriyau} and by Miao, Wang and Xie in \cite{Miao} that under some physically reasonable conditions the Hawking energy (in the time symmetric case) is well behaved when evaluated in constant mean curvature spheres. 

This paper is divided into two parts, one devoted to studying foliations of area constrained critical surfaces of the Hawking energy, and other devoted to studying an apparent  discrepancy of the small sphere limit when approaching a point in spacelike direction.

\subsection{Foliations}  We will work in the initial data set setting, this means that we consider a smooth $3$-dimensional Riemannian manifold $(M,g)$, which will be equipped with a symmetric $2$-tensor $k$, we denote this manifold as a triple $(M,g,k)$.  The motivation for considering this setting comes again from general relativity since  $(M,g,k)$  can be seen as a spacelike hypersurface  with second fundamental form $k$ in a $4$-dimensional spacetime. In this setting the Hawking energy can be written for a surface $\Sigma \subset M $ as
\begin{equation}\label{hawkingmass2}
    \mathcal{E}(\Sigma) = \sqrt{\frac{|\Sigma|}{16 \pi}} \left( 1- \frac{1}{16 \pi} \int_\Sigma H^2 - P^2 d\mu \right),
\end{equation}
where $H$ is the mean curvature of the surface $\Sigma$ and   $P=\tr_{g_{\Sigma}} k$ is the trace of the tensor $k$ with respect to the metric induced in $\Sigma$, that is  $P= \tr_\Sigma k= \tr k -k(\nu,\nu)$, where $\nu$ is the outward  normal to $\Sigma$ in $M$.

From a variational point of view studying (\ref{hawkingmass2}) is equivalent to studying the Hawking functional 
\begin{equation}\label{hawfun}
    \mathcal{H}(\Sigma)= \frac{1}{4} \int_\Sigma H^2 - P^2 d\mu
\end{equation}
We are interested in studying area constrained critical surfaces of this functional, then considering a fixed area, we look for surfaces that maximize or minimize the functional.  In particular, these are then critical surfaces of the Hawking energy. In case $k=0$, the so called time symmetric case (or a totally geodesic hypersurface) the Hawking functional reduces to the Willmore functional 
\begin{equation}\label{willfunc}
    \mathcal{W}(\Sigma)= \frac{1}{4} \int_\Sigma H^2  d\mu 
\end{equation}
and the critical surfaces of this functional subject to the constraint that $|\Sigma| $ be fixed are the area constrained Willmore surfaces which we call here for simplicity just Willmore surfaces. These surfaces are characterized by the following Euler Lagrange equation with Lagrange parameter $\lambda$.
\begin{equation}\label{Willeq}
    0= \lambda H  +\Delta^\Sigma H + H|\mathring{B}|^2+ H \mathrm{Ric}(\nu, \nu), 
\end{equation}
where $\mathring{B}$   is the traceless part of the second
fundamental form $B$ of $\Sigma$ in $M$, that is $ \mathring{B} = B- \frac{1}{2} H g_\Sigma$ with norm  $|\mathring{B}|^2 = \mathring{B}_{ij}\, g_\Sigma^{ip}\, g_\Sigma^{jq}\, \mathring{B}_{pq}$,  $\mathrm{Ric}$ is the Ricci curvature of $M$, $\nu$ is the outward normal to $\Sigma$  and $\Delta^\Sigma $ is the Laplace-Beltrami operator on $\Sigma$.

The Willmore surfaces have been extensively studied and in the context of general relativity they were first introduced by Lamm, Metzger and Schulze in \cite{willflat}, where they showed that there exist  a unique foliation of Willmore spheres for asymptotically flat manifolds, this is a foliation that covers the whole manifold except a compact region, what we call a foliation at infinity. In their work they claimed that these surfaces are the optimal surfaces for evaluating the Hawking energy, this since if the manifold has nonnegative  scalar curvature (that means that the dominant energy condition holds) the Hawking energy is nonnegative on these surfaces and it is monotonically nondecreasing along the foliation. It was also shown in \cite{Thomas} by Koerber that the leaves of the foliation are strict local area preserving maximizers of the Hawking energy.

This foliation by Willmore spheres at infinity has been improved by Eichmair and Koerber in \cite{eichko}  where they used a Lyapunov-Schmidt reduction procedure (a technique that  will be also applied in our construction) to obtain the foliation, furthermore, in \cite{willcen} they studied the center of mass of this foliation. The non-totally geodesic case was also considered by Fridrich in his thesis \cite{Friedrich2020}, where he generalized the foliation of \cite{willflat} for critical surfaces of the Hawking functional and showed that the Hawking energy is monotonically nondecreasing along the foliation. We will see in Theorem \ref{positivity} that under even more general conditions, if the dominant energy condition holds then,  the Hawking energy is nonnegative on these surfaces for a large enough radius.
\begin{theorem*}
Assuming that on an asymptotically flat initial data set $(M,g,k)$ the dominant energy conditions holds. There exist an $r_0>0$ such that for $r\geq r_0$,  if $\Sigma_r$ is a critical surface of the Hawking energy  with area radius $r$ ( $|\Sigma_r |=4 \pi r^2$), it is almost  centered, the Lagrange parameter $ \lambda$ is positive with  $ \lambda = \mathcal{O}(r^{-3})$  and also the mean curvature is positive with $H = \mathcal{O}(r^{-1}) $  then  the Hawking energy on $\Sigma_r$ is nonnegative.
\end{theorem*}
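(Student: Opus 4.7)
The inequality $\mathcal{E}(\Sigma_r)\ge 0$ is equivalent to $\int_{\Sigma_r}(H^2-P^2)\,d\mu \le 16\pi$. My plan is to combine the Gauss equation and Gauss--Bonnet on the topological sphere $\Sigma_r$ with the Euler--Lagrange equation derived in the foliation part of the paper, producing a sharp asymptotic expansion of the left-hand side in the AF chart whose leading correction is the ADM mass; the conclusion then follows from the spacetime positive mass theorem.

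The algebraic backbone is the following. Inserting the Gauss equation $K=\tfrac{1}{2}\mathrm{scal}-\mathrm{Ric}(\nu,\nu)+\tfrac{H^2}{4}-\tfrac{|\mathring{B}|^2}{2}$ into $\int_{\Sigma_r} K\,d\mu=4\pi$ gives
\[
\tfrac{1}{4}\!\int_{\Sigma_r}(H^2-P^2)\,d\mu
= 4\pi + \int_{\Sigma_r}\!\Bigl(\mathrm{Ric}(\nu,\nu) + \tfrac{1}{2}|\mathring{B}|^2 - \tfrac{1}{2}\mathrm{scal} - \tfrac{1}{4}P^2\Bigr)d\mu.
\]
The Euler--Lagrange equation for area constrained critical Hawking surfaces, which is worked out in the first part of the paper, has the schematic form
\[
\lambda H + \Delta^{\Sigma}H + H|\mathring{B}|^2 + H\,\mathrm{Ric}(\nu,\nu) + \mathcal{Q}(k,B)=0,
\]
where $\mathcal{Q}$ collects the new contributions from varying the $P^2$ piece. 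Since $H>0$ by hypothesis I divide by $H$, integrate, and use $\int(\Delta H)/H\,d\mu = \int|\nabla H|^2/H^2\,d\mu$ to obtain
\[
\int_{\Sigma_r}\!\bigl(|\mathring{B}|^2+\mathrm{Ric}(\nu,\nu)\bigr)d\mu
= -\lambda|\Sigma_r| - \int_{\Sigma_r}\!\frac{|\nabla H|^2}{H^2}\,d\mu - \int_{\Sigma_r}\!\frac{\mathcal{Q}(k,B)}{H}\,d\mu.
\]
Substituting this into the previous identity displays $\tfrac{1}{4}\!\int(H^2-P^2)-4\pi$ as a sum of manifestly nonpositive terms --- $-\lambda|\Sigma_r|$, $-\int|\nabla H|^2/H^2$, $-\tfrac{1}{2}\int|\mathring{B}|^2$, $-\tfrac{1}{4}\int P^2$ --- plus the remainder $-\tfrac{1}{2}\int\mathrm{scal} - \int \mathcal{Q}/H$.

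The almost--centered hypothesis allows me to view $\Sigma_r$ as a small normal graph of a coordinate sphere $S_{x_0,r}$ in the asymptotic chart, with the $C^{2,\alpha}$ estimates on the graph function (and on $H-2/r$, $|\nabla H|$, $|\mathring{B}|$) provided by the Lyapunov--Schmidt construction of the foliation in the previous sections. Plugging in $g_{ij}=\delta_{ij}+O(|x|^{-1})$, $k_{ij}=O(|x|^{-2})$ with the usual decay of derivatives, and using the hypotheses $\lambda=O(r^{-3})$ and $H=O(r^{-1})$ (which make $\lambda|\Sigma_r|=O(r^{-1})$ and $\int |\nabla H|^2/H^2=o(r^{-1})$), each integrand can be expanded in powers of $r^{-1}$. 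Rewriting $\mathrm{scal}$ via the DEC--energy density $2\mu=\mathrm{scal}+(\tr k)^2-|k|^2\ge 2|J|$ and carefully tracking the $\mathcal{Q}(k,B)$--terms, the $O(r^{-1})$ pieces of $-\tfrac{1}{2}\int\mathrm{scal}-\int\mathcal{Q}/H$ reassemble precisely into the ADM mass integrand, giving
\[
\int_{\Sigma_r}(H^2-P^2)\,d\mu = 16\pi - 16\pi\,\frac{m_{\mathrm{ADM}}}{r} + o(r^{-1}),
\]
equivalently $\mathcal{E}(\Sigma_r)=m_{\mathrm{ADM}}+o(1)$.

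The spacetime positive mass theorem (Eichmair--Huang--Lee--Schoen), whose only input is DEC, gives $m_{\mathrm{ADM}}\ge 0$. When $m_{\mathrm{ADM}}>0$ I choose $r_0$ so large that the $o(1)$ error is bounded by $m_{\mathrm{ADM}}/2$; in the borderline case $m_{\mathrm{ADM}}=0$, the rigidity statement forces $(M,g,k)$ to be initial data for Minkowski spacetime, where one verifies $\mathcal{E}(\Sigma_r)\ge 0$ directly on what are then small perturbations of round Euclidean spheres. The main obstacle is the reassembly in the expansion step: the $P^2$ terms and the $\mathcal{Q}(k,B)$--block are genuinely new compared to the Willmore setting of Lamm--Metzger--Schulze, and the delicate point is to check that after expanding in the asymptotic chart no spurious $O(r^{-1})$ contribution survives outside the ADM mass. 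The precise hypotheses on $\lambda$, $H$, and almost-centeredness are tailored exactly to rule out such obstructions.
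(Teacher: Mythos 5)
Your algebraic backbone coincides with the paper's: both proofs reduce $\mathcal{E}(\Sigma_r)\ge 0$ to $\int_{\Sigma_r}(H^2-P^2)\,d\mu\le 16\pi$, divide the Euler--Lagrange equation by $H$, integrate $\Delta^\Sigma H/H$ by parts to produce $\int|\nabla\log H|^2$, and combine the Gauss equation with Gauss--Bonnet. The divergence comes afterwards, and this is where your proposal has a genuine gap. The paper never extracts an ADM mass expansion and never invokes the positive mass theorem: it simply observes that $-\lambda|\Sigma_r|$, $-\int|\nabla\log H|^2$, $-\tfrac12\int|\mathring B|^2$ and the $P^2$-terms already carry the right sign (this is exactly where the hypothesis $\lambda>0$ enters), bounds the whole $\mathcal{Q}(k,B)/H$ block crudely by $\int \tfrac{C}{H}|k||\nabla k|\,d\mu$, which decays by the assumed fall-off of $k$, and disposes of $-\tfrac12\int\mathrm{Sc}$ by adding and subtracting $\tfrac13(\tr k)^2$ so that the dominant energy condition gives $\mathrm{Sc}+\tfrac23(\tr k)^2\ge 0$ pointwise. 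Only the pointwise consequence $\mu\ge 0$ of the DEC is used, not the global positive mass theorem.

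The gap in your route is the claimed expansion $\int_{\Sigma_r}(H^2-P^2)\,d\mu=16\pi-16\pi\,m_{\mathrm{ADM}}/r+o(r^{-1})$. First, the hypotheses of the theorem give only $O$-bounds: $\lambda=\mathcal{O}(r^{-3})$ with $\lambda>0$, $H=\mathcal{O}(r^{-1})$, and ``almost centered'' (meaning $|x|\sim r$ on $\Sigma_r$); they do not supply the $C^{2,\alpha}$ graph estimates over coordinate spheres that your expansion needs, and the Lyapunov--Schmidt estimates you invoke belong to the \emph{local} foliation near a point constructed in Section 2, not to arbitrary large critical surfaces in the asymptotic regime (the theorem is stated for any critical surface with the listed properties, not only for leaves of the asymptotic foliation of \cite{Friedrich2020}). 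Second, even granting such estimates, your own identity contains the term $-\lambda|\Sigma_r|=\mathcal{O}(r^{-1})$, which sits at exactly the order of the mass term; since $\lambda$ has no prescribed leading coefficient in the hypotheses, the coefficient of $r^{-1}$ in your expansion cannot be universally $-16\pi\,m_{\mathrm{ADM}}$, so the ``reassembly'' step you yourself flag as delicate would in fact fail. (Your treatment of the borderline case $m_{\mathrm{ADM}}=0$ via rigidity adds a further heavy and unneeded input.) The repair is to abandon the expansion entirely: after your substitution, every $\mathcal{O}(r^{-1})$ term is either manifestly nonpositive or controlled in absolute value by the decay of $k$ and by $\mathrm{Sc}+\tfrac23(\tr k)^2\ge 0$, which is all the theorem requires.
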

This shows that  the Hawking functional critical surfaces  in the asymptotically flat case have the same desirable properties as the Willmore surfaces and are "optimal" (in the sense of Lamm, Metzger and Schulze) to evaluate the Hawking energy on a spacelike hypersurface.

Here we are more interested in the local behaviour of the surfaces; in this direction, it was shown by Lamm and Metzger in \cite{ToMet} and later by Laurain and Mondino in \cite{Laurain} that Willmore surfaces concentrated around points which are critical points of the scalar curvature, that is points $p\in M$ such that  $\nabla \mathrm{Sc}_p =0$. Furthermore in \cite{locwill}  Lamm, Metzger and Schulze, and in \cite{ikoma} Ikoma, Machiodi and Mondino showed by a means of a Lyapunov-Schmidt reduction procedure that if at a point $p \in M$, $\nabla \mathrm{Sc}_p =0$ and  $\nabla^2 \mathrm{Sc}_p$ is not degenerated then around $p$ there is a local foliation of area constrained Willmore surfaces around that point. 

The first part of this paper will be devoted to generalizing these local foliations to the general case when $k \neq 0$, obtaining the following results.  
\begin{theorem*}
Let $p \in M $ be such that at $p$, $\nabla(  \mathrm{Sc} + \frac{3}{5} (\tr k)^2 + \frac{1}{5} |k|^2)=0  $ and $\nabla^2(  \mathrm{Sc} + \frac{3}{5} (\tr k)^2 + \frac{1}{5} |k|^2) $ is nondegenerate. Then there exist  $\delta, \epsilon_0, C >0$ such that if at $p$, 
$$ C |(\nabla^2(  \mathrm{Sc} + \frac{3}{5} (\tr k)^2 + \frac{1}{5} |k|^2))^{-1}| \cdot \; |k|\; |\nabla k|\, (|k|^2 + |\mathrm{Ric}|  ) <1 $$ then there exist a smooth foliation $\mathcal{F}= \{ S_r : r\in (0, \delta)  \} $  around $p$ of  area constrained critical spheres of the Hawking functional, that is surfaces satisfying equation (\ref{eulag}), for some $\lambda \in \mathbb{R}$. Furthermore these surfaces can be express  as normal graphs over geodesic spheres of radius $r$, and they  satisfy $\mathcal{H}(S_r) < 4\pi +\epsilon_0^2$ and $|S_r|< \epsilon_0^2 $,  for $r \in (0, \delta)$.
\end{theorem*}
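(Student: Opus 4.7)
The strategy is a Lyapunov--Schmidt reduction modelled on \cite{locwill,ikoma}, extended to handle the $P^2$-term of the Hawking functional. First, I would write down the Euler--Lagrange equation (\ref{eulag}) for $\mathcal{H}-\lambda |\cdot|$: varying (\ref{hawfun}) produces the Willmore equation (\ref{Willeq}) plus additional terms coming from varying $\int_\Sigma P^2\, d\mu$, which involve $k$, $\nabla k$, the normal $\nu$ and $H$ but vanish identically when $k\equiv 0$. I would parametrize candidate surfaces as normal graphs $\Sigma = \mathrm{graph}_{q,r}(u)$ of a function $u\in C^{4,\alpha}(S^2)$ over the geodesic sphere of radius $r$ centered at a nearby point $q$, and regard the unknowns as $(q,u,\lambda)$ while treating $r\to 0$ as the small parameter.

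Second, working in geodesic normal coordinates at $q$ and rescaling, the Euler--Lagrange operator expands as the Euclidean Willmore operator plus curvature and $k$-dependent corrections ordered by powers of $r$. The linearization at the round configuration $u=0$, $\lambda=-2/r^{2}$ is, to leading order,
\begin{equation*}
L_{0} = \tfrac{1}{r^{4}}\bigl(\Delta^{S^{2}}(\Delta^{S^{2}}+2)\bigr),
\end{equation*}
whose kernel $K$ is three-dimensional and spanned by the coordinate functions $x^{i}/r$ (the area constraint kills the constant mode). Splitting the equation into its $K$- and $K^{\perp}$-components, the $K^{\perp}$-part is solved by the implicit function theorem for $(u,\lambda)$ as a smooth function of $(q,r)$, with quantitative bounds. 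The smallness hypothesis on $C\,|(\nabla^{2}(\cdots))^{-1}|\cdot|k|\,|\nabla k|\,(|k|^{2}+|\mathrm{Ric}|)$ enters precisely here: it guarantees that the $k$-dependent perturbations are subdominant to the invertible Willmore part, so that the Neumann series underlying the implicit function theorem converges.

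Third comes the finite-dimensional reduced equation, obtained by substituting the solution $(u(q,r),\lambda(q,r))$ into the $K$-projection of the Euler--Lagrange operator. After expanding in $r$, I expect this to take the form
\begin{equation*}
c\, r^{m+1}\,\nabla\!\bigl(\mathrm{Sc} + \tfrac{3}{5}(\tr k)^{2} + \tfrac{1}{5}|k|^{2}\bigr)_{q} \;+\; o(r^{m+1}) \;=\; 0,
\end{equation*}
with $c>0$. The special combination $\tfrac{3}{5}(\tr k)^{2}+\tfrac{1}{5}|k|^{2}$ is what arises when $\int_{\Sigma}P^{2}\,d\mu$ on a small geodesic sphere is expanded to the order that matches the $\int_{\Sigma}H^{2}\,d\mu$ contribution producing $\nabla\mathrm{Sc}$ in the classical case. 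The assumptions $\nabla(\mathrm{Sc}+\tfrac{3}{5}(\tr k)^{2}+\tfrac{1}{5}|k|^{2})_{p}=0$ and nondegeneracy of the Hessian at $p$ then allow one last application of the implicit function theorem, solving the reduced equation for $q=q(r)$ near $p$ and producing a smooth family $S_{r}$, $r\in(0,\delta)$. The bounds $\mathcal{H}(S_{r})<4\pi+\epsilon_{0}^{2}$ and $|S_{r}|<\epsilon_{0}^{2}$ are consequences of the round-sphere expansions $\mathcal{H}(S^{2}_{r})\to 4\pi$, $|S^{2}_{r}|\to 0$ as $r\to 0$ together with the $C^{4,\alpha}$-smallness of $u$; the smooth dependence $r\mapsto S_{r}$ yields the foliation.

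The main obstacle is the identification of the precise combination $\tfrac{3}{5}(\tr k)^{2}+\tfrac{1}{5}|k|^{2}$ at the correct order of the reduced equation. Every $k$-dependent variational term from $\int P^{2}\,d\mu$ must be Taylor-expanded in geodesic normal coordinates at $p$, integrated against the translation modes $x^{i}/r$, and combined with the classical Willmore--scalar curvature expansion; the coefficients $3/5$ and $1/5$ are sensitive to this computation and, together with the cross-terms between $H$ and $P$, are where the genuine non-time-symmetric content of the theorem sits. Keeping the error estimates at this step sharp enough to remain consistent with the smallness condition on $|k|,|\nabla k|,|\mathrm{Ric}|$ is where the real work lies.
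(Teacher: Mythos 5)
Your overall architecture coincides with the paper's: rescaled normal coordinates at a moving center, Lyapunov--Schmidt splitting of $C^{4,1/2}(\mathbb{S}^2)$ along the kernel of $-\Delta^{\mathbb{S}^2}(-\Delta^{\mathbb{S}^2}-2)$, solution of the $K^\perp$-equation by the implicit function theorem, and a reduced finite-dimensional equation whose leading term is $\frac{4\pi}{3}\nabla(\mathrm{Sc}+\frac{3}{5}(\tr k)^2+\frac{1}{5}|k|^2)$, solved for the center using the nondegenerate Hessian. (One cosmetic difference: the paper keeps the full four-dimensional kernel and determines $\lambda$ from the projection onto the constants, rather than discarding that mode via the area constraint.)

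There is, however, one genuine gap: you have misplaced the role of the hypothesis $C\,|(\nabla^{2}(\mathrm{Sc}+\frac{3}{5}(\tr k)^{2}+\frac{1}{5}|k|^{2}))^{-1}|\cdot|k|\,|\nabla k|\,(|k|^{2}+|\mathrm{Ric}|)<1$. It is \emph{not} needed to make the implicit function theorem converge: the linearization on $K^\perp$ is $-\Delta^{\mathbb{S}^2}(-\Delta^{\mathbb{S}^2}-2)|_{K^\perp}$, invertible with no smallness assumption, and the reduced operator is invertible by the nondegeneracy of the Hessian alone. What the condition actually controls is the \emph{foliation property}, which your proposal never verifies: "smooth dependence $r\mapsto S_r$" does not prevent the leaves from intersecting, since the centers $c(\tau(r))$ may drift faster than the radius grows. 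The paper computes the lapse $\alpha = 1 + \frac{\partial\tau^k}{\partial r}|_{r=0}\langle e_k,\nu\rangle$ of the family and must show $|\frac{\partial\tau}{\partial r}|_{r=0}|<1$. Differentiating the reduced equation in $r$ expresses $\frac{\partial\tau}{\partial r}|_{r=0}$ in terms of $(\nabla^2(\cdots))^{-1}$ applied to $\tilde\pi_1(\Phi_{\varphi rr}(0,0,0,0)\varphi_0)$; the Willmore part of $\Phi_{\varphi rr}$ is even and drops out, but $W_2$ contributes an odd piece proportional to $\nabla k \ast k$, which paired with $\varphi_0 = \mathcal{O}(|k|^2+|\mathrm{Ric}|)$ yields precisely the product $|k|\,|\nabla k|\,(|k|^2+|\mathrm{Ric}|)$ appearing in the hypothesis. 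Without this step you obtain only a concentration of critical surfaces, not a foliation (this distinction is exactly the content of the remark following the theorem). You should add this lapse computation; the rest of your outline, including the identification of the coefficients $3/5$ and $1/5$ via the moment integrals of $x^i x^j$ and $x^i x^j x^p x^q$ over $\mathbb{S}^2$, is on track.
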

 We also obtained a uniqueness result. 
\begin{theorem*}
$(i)$ Assume that at $p$, $\nabla(  \mathrm{Sc} + \frac{3}{5} (\tr k)^2 + \frac{1}{5} |k|^2)=0  $, $\nabla^2(  \mathrm{Sc} + \frac{3}{5} (\tr k)^2 + \frac{1}{5} |k|^2) $ is nondegenerate and that the foliation $ \mathcal{F}$ of the previous theorem  exists satisfying $\mathcal{H}(\Sigma) < 4\pi +\epsilon_0^2$ and $|\Sigma|< \epsilon_0^2 $ for any $\Sigma \in \mathcal{F} $ and the $\epsilon_0$ of the theorem. If  $ \mathcal{F}_2$ is a  foliation around $p$ of  area constrained critical spheres of the Hawking functional, which satisfy $\mathcal{H}(\Sigma) < 4\pi +\epsilon^2$ and $|\Sigma|< \epsilon^2 $ for any $\Sigma \in \mathcal{F}_2 $ and some $\epsilon \leq \epsilon_0$, then either $\mathcal{F}$ is a restriction of $\mathcal{F}_2$ or $\mathcal{F}_2$ is a restriction of $\mathcal{F}$.

$(ii)$ Claim $(i)$  also holds, if instead of  foliations, we consider a concentration of  surfaces around $p$ that satisfy $\mathcal{H}(\Sigma) < 4\pi +\epsilon^2$ and $|\Sigma|< \epsilon^2 $ for any $\Sigma \in \mathcal{F}_2 $ and  $\epsilon \leq \epsilon_0$.
\end{theorem*}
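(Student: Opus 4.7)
The plan is to extract from the Lyapunov--Schmidt construction used to produce $\mathcal{F}$ a pointwise uniqueness statement: any area-constrained critical surface $\Sigma$ of the Hawking functional satisfying $\mathcal{H}(\Sigma)<4\pi+\epsilon_0^2$ and $|\Sigma|<\epsilon_0^2$ must coincide with one of the leaves $S_r$. Once this is proved, both parts $(i)$ and $(ii)$ follow at once: in the foliation case every leaf of $\mathcal{F}_2$ equals some $S_r$, and in the concentration case every sufficiently small surface of the concentrating family is again some $S_r$, so the two collections must agree on the overlap of their parameter ranges.

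First I would show that, after possibly shrinking $\epsilon_0$, the hypotheses $\mathcal{H}(\Sigma)<4\pi+\epsilon^2$ and $|\Sigma|<\epsilon^2$ force $\Sigma$ to be a small $C^{2,\alpha}$ normal graph $\phi$ over a geodesic sphere $S_r(q)$ of area radius $r$ centered at some point $q$ close to $p$. This is the $\epsilon$-regularity / graphical decomposition for almost-Willmore surfaces of small area obtained from Simon's monotonicity combined with the arguments of De Lellis--M\"uller and Lamm--Metzger, adapted to the $H^{2}-P^{2}$ integrand; the $P^{2}$ term contributes only a lower-order perturbation at the relevant scale. The map $\Sigma\mapsto (q(\Sigma),r(\Sigma),\phi(\Sigma))$ is made canonical by choosing $(q,r)$ so that $\phi$ is $L^{2}$-orthogonal on the sphere to the four-dimensional kernel of the linearized round-sphere Willmore operator, spanned by the constants and the restrictions of the Euclidean coordinate functions.

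Feeding $(q,r,\phi)$ into the splitting underlying the existence theorem, the Euler--Lagrange equation decomposes into a perpendicular equation $\Pi^{\perp}F(q,r,\phi)=0$ and a finite-dimensional one $\Pi\, F(q,r,\phi)=0$. The first is uniformly elliptic on the orthogonal complement and, by the implicit function theorem, admits a unique small solution $\phi=\phi(q,r)$ for every $(q,r)$ near $(p,0)$, so the $\phi$ attached to $\Sigma$ is forced to agree with that $\phi(q,r)$. After an appropriate rescaling, the parallel equation reads
\[
\nabla\Bigl(\mathrm{Sc}+\tfrac{3}{5}(\tr k)^{2}+\tfrac{1}{5}|k|^{2}\Bigr)\Big|_{q}+\mathcal{O}(r^{2})=0,
\]
and the nondegeneracy of the Hessian at $p$, combined with a second application of the implicit function theorem, yields a unique solution $q=q(r)$ for each small $r$. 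Hence the one-parameter family $r\mapsto S_r$ exhausts all such critical surfaces, which is the claimed uniqueness.

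The main obstacle I expect is verifying that every $\Sigma\in\mathcal{F}_{2}$, or every surface in the concentration of part $(ii)$, genuinely falls inside the uniqueness range of both implicit function theorem steps: that the associated $\phi$ is quantitatively small in the norm used by the Lyapunov--Schmidt reduction, and that $|q-p|$ is adequately controlled (one expects $|q-p|=\mathcal{O}(r)$, via the a priori estimates already developed for $\mathcal{F}$). This is where the $\mathcal{H}(\Sigma)<4\pi+\epsilon^{2}$ and $|\Sigma|<\epsilon^{2}$ hypotheses must be used quantitatively through the graphical decomposition, and $\epsilon_{0}$ must be chosen small enough that both IFT applications apply. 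Since the whole argument treats each surface individually, without any use of the foliation structure of $\mathcal{F}_{2}$, part $(ii)$ will come out for free.
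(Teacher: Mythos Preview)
Your proposal is correct and follows essentially the same route as the paper: reduce each leaf of $\mathcal{F}_2$ to a small normal graph over a geodesic sphere via the De Lellis--M\"uller and Lamm--Metzger type estimates (the paper quotes the needed a priori bounds for Hawking-critical surfaces from \cite{Alex}), fix the gauge so that $\phi\perp K$, and then invoke the uniqueness clause of the two implicit function theorem steps from the existence proof; since the argument is leafwise, $(ii)$ is automatic. One small omission: the four-dimensional parallel equation also has a $K_0$ component, which the paper uses to pin down $\lambda(0)=-\tfrac{1}{3}\mathrm{Sc}-\tfrac{1}{5}(\tr k)^2-\tfrac{1}{15}|k|^2$ before solving for $q(r)$ from the $K_1$ part.
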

\subsection{Small Sphere Limit}\label{secsmall}

For the second part of this paper, we will focus on studying the small sphere limit of the Hawking energy. In general, any quasi local energy must have the right asymptotics when evaluated on large and small spheres. In particular it must satisfy the small sphere limit. 

Here we consider a $4$-dimensional spacetime $M^4$ and will denote the geometric quantities on this manifold by an index $(\cdot)^4$. Before introducing the small sphere limit we need to define what a light cut is. 

Let $p\in M^4$ and let $C_p$ be the future null cone of $p$, that is, the null hypersurface generated by future null geodesics starting at $p$. Pick any future directed timelike unit vector $e_0$ at $p$.  We normalize a null vector $L$ at $p$ by $\langle L,e_0 \rangle =-1$. We consider the null geodesics of the vector  $L$ and let $l$ be the affine parameter of these null geodesics. We define  the light cuts $\Sigma_l$ to be the family of surfaces on $C_p$ determined by the level sets of the
affine parameter $l$.

Let $p\in M^4$ and let $C_p$ be the future null cone of $p$, that is, the null hypersurface generated by future null geodesics starting at $p$. Pick any future directed timelike unit vector $e_0$ at $p$.  We normalize a null vector $L$ at $p$ by $\langle L,e_0 \rangle =-1$. We consider the null geodesics of the vector  $L$ and let $l$ be the affine parameter of these null geodesics. We define  the light cuts $\Sigma_l$ to be the family of surfaces on $C_p$ determined by the level sets of the
affine parameter $l$.

 The small sphere limit tells us that when evaluating the quasi local energy on surfaces approaching a point $p$, in a spacetime along the light cuts of the null cone of $p$, the leading term of the quasi local energy should recover the stress energy tensor in spacetimes with matter fields, i.e.,  $\lim_{r \to 0} \frac{M(\Sigma_r)}{r^3} = \frac{4 \pi}{3} T(e_0,e_0)$.  If the point is contained in a spacelike hypersurface $M \subset M^4$ then by using the Gauss–Codazzi
equations we obtain
$$\lim_{r \to 0} \frac{M(\Sigma_r)}{r^3} = \frac{4 \pi}{3} T(e_0,e_0) =\frac{1}{12}( \mathrm{Sc} + (\tr k)^2 - |k|^2 ), $$
where everything is evaluated at $p$, and the right hand side is the energy density of the Einstein constrained equations on  $M$ (here $\mathrm{Sc} $ and $k$ are the scalar curvature and second fundamental form of $M$). The small sphere limit was first introduced by Horowitz and Schmidt  for the Hawking energy \cite{Haw}, it must be satisfy by any reasonable notion of quasi local energy as it was shown for the Brown-York energy \cite{Brown} the Kijowski-Epp-Liu-Yau  energy \cite{Yu}, the Wang-Yau \cite{wangyau} and for their higher dimensional  versions \cite{Wang} among others. In particular, when the point $p$ is contained in a spacelike hypersurface $M \subset M^4$, we have the following expansion for the Hawking energy for cuts on the light cut $S_l$
\begin{equation}\label{lightex}
   \mathcal{E}(\Sigma_l)=   \frac{1}{12}( \mathrm{Sc} + (\tr k)^2 - |k|^2 )l^3 + \mathcal{O}(l^5)
\end{equation}
at $p$. Having this expansion in mind when studying area constrained critical surfaces of the Hawking functional (\ref{hawfun}) in a spacelike hypersurface (initial data set), it would be natural to think that such surfaces concentrate around points satisfying that
\begin{equation}
    \nabla (\mathrm{Sc} + (\tr k)^2 - |k|^2)=0
\end{equation}
 at $p$. However, in \cite{Alex} Friedrich found that this is not the case. In fact a point having a concentration of these surfaces must satisfy 
$$\nabla(  \mathrm{Sc} + \frac{3}{5} (\tr k)^2 + \frac{1}{5} |k|^2)=0  $$
at $p$, this was an unexpected result that we managed to confirm with our results as well (in Theorem \ref{primfoli}) and we also obtained in the equivalent Theorem \ref{nonexis}. This result gives the  impression that the local expansion of the Hawking energy depends on how you approach the point.  Figure \ref{figure} illustrates the situation.

In section \ref{secdis}, we will  study this discrepancy found by Friedrich and see that it comes from purely geometric reasons, in particular, that even if  a priori the two ways to approach the point may look similar, the surfaces used  are quite different. Finally, in Remark \ref{excess} we will see that these results suggest that  the critical surfaces of the Hawking functional induce an excess in the measure of  the Hawking energy. 
\begin{figure}[h]\label{figure}
\centering
\includegraphics[width=8.7cm]{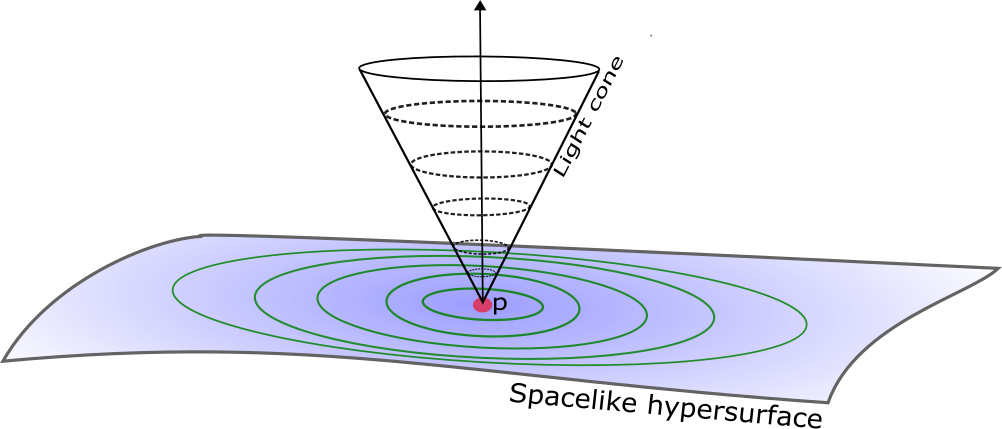}
\caption{ Comparison between approaching a point along cuts on a null cone and along critical surfaces on a spacelike hypersurface.}
\end{figure}

\section{Foliations}\label{sec2}
\subsection{Preliminaries and setting}
In this section, we work with data $(M,g,k)$ where $(M,g)$ is a smooth $3$-dimensional Riemannian manifold which is equipped with a symmetric $2$-tensor $k$. In General relativity, the data $(M,g,k)$ represents a spacelike hypersurface (or an inital data set) with second fundamental form $k$ in a $4$-dimensional spacetime. In this setting we don't need any mention for the spacetime.
We introduce the following notation: The covariant derivatives will be denoted by $;$ and the partial derivatives $\frac{\partial}{\partial x^i}$  by a comma  or by $\partial_i$.  

Now we derive the equation that characterizes the area surfaces equations of the Hawking functional.
\begin{lemma}[First variation]
The area constrained Euler Lagrange equation for the Hawking functional (\ref{hawfun}) is 
\begin{equation}\label{eulag}
\begin{split}
   0=& \lambda H  +\Delta^\Sigma H + H|\mathring{B}|^2+ H \mathrm{Ric}(\nu, \nu)+P( \nabla_\nu \tr k - \nabla_\nu k(\nu,\nu )) - 2P \diver_\Sigma (k(\cdot, \nu))\\ & +\frac{1}{2}H P^2 - 2k (\nabla^\Sigma P, \nu )
    \end{split}
\end{equation}
Here $H$ is the mean curvature of $\Sigma$ , $\mathring{B}$   is the traceless part of the second
fundamental form $B$ of $\Sigma$ in $M$, that is
$\mathring{B}= B- \frac{1}{2} H g_\Sigma$ where $g_\Sigma$ is the induced
metric on $\Sigma$,  $\mathrm{Ric} $ is the Ricci curvature of $M$, $\nabla^\Sigma $, $\diver_\Sigma$ and $\Delta^\Sigma$ are the covariant derivate, tangential divergence  and Laplace Beltrami operator on $\Sigma$. Finally $\lambda \in \mathbb{R}$ plays the role of a Lagrange parameter.
\end{lemma}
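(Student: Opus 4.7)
The plan is a standard normal variation computation, treating the area constraint via a Lagrange multiplier $\lambda$. Fix a smooth family $\Sigma_t$ obtained from $\Sigma=\Sigma_0$ by flowing in the normal direction with speed $f\in C^\infty(\Sigma)$, and recall the classical first variation formulas
\[
\frac{d}{dt}\bigg|_{t=0} d\mu = Hf\,d\mu,\qquad \dot\nu = -\nabla^\Sigma f,\qquad \dot H = -\Delta^\Sigma f -(|B|^2+\mathrm{Ric}(\nu,\nu))f.
\]
Applying these together with the identity $|B|^2 = |\mathring B|^2 + \tfrac{1}{2}H^2$ (valid on a surface) and integrating by parts on the closed surface $\Sigma$, the first variation of the Willmore piece $\tfrac{1}{4}\int H^2\,d\mu$ is the familiar
\[
\tfrac{d}{dt}\big|_{t=0}\tfrac{1}{4}\!\int H^2\,d\mu = -\tfrac{1}{2}\!\int f\big(\Delta^\Sigma H + H|\mathring B|^2 + H\,\mathrm{Ric}(\nu,\nu)\big)\,d\mu,
\]
which produces the first four terms of (\ref{eulag}) (after multiplying by $-2$ and adding the $\lambda H$ Lagrange contribution coming from the area constraint).

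The remaining work is to differentiate $-\tfrac{1}{4}\int P^2\,d\mu$ with $P = \tr k - k(\nu,\nu)$. The quantity $\tr k$ is a scalar on $M$, so its time derivative along the flow is simply $f\nabla_\nu \tr k$. For $k(\nu,\nu)$ there are two contributions: one from moving the base point (giving $f(\nabla_\nu k)(\nu,\nu)$) and one from rotating $\nu$ (giving $2k(\dot\nu,\nu)=-2k(\nabla^\Sigma f,\nu)$). Combining,
\[
\dot P = f\,\nabla_\nu \tr k - f\,(\nabla_\nu k)(\nu,\nu) + 2k(\nabla^\Sigma f,\nu).
\]
Inserting into $-\tfrac14\int(2P\dot P + P^2 Hf)\,d\mu$ produces the terms $-\tfrac12 fP\bigl(\nabla_\nu \tr k-(\nabla_\nu k)(\nu,\nu)\bigr)$ and $-\tfrac14 HP^2 f$ directly, together with the residual boundary-type term $-\int Pk(\nabla^\Sigma f,\nu)\,d\mu$.

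The key manipulation is to rewrite this last term in divergence form. View the tangential $1$-form $\omega$ on $\Sigma$ defined by $\omega(X)=k(X,\nu)$, so $k(\nabla^\Sigma f,\nu)=\langle \omega^\sharp,\nabla^\Sigma f\rangle$; the divergence theorem on the closed surface $\Sigma$ then gives
\[
-\!\int P\,k(\nabla^\Sigma f,\nu)\,d\mu = \int f\,\diver_\Sigma(P\omega^\sharp)\,d\mu = \int f\bigl(P\diver_\Sigma(k(\cdot,\nu)) + k(\nabla^\Sigma P,\nu)\bigr)\,d\mu.
\]
Assembling all contributions (and using that the two occurrences of $\pm\tfrac14 H^3 f$ cancel) yields a single integrand against $f$. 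Multiplying by $-2$, adding the $\lambda H$ constraint term, and appealing to the fundamental lemma of the calculus of variations (for mean-zero test functions $f$, up to the choice of $\lambda$) produces equation (\ref{eulag}). The only real bookkeeping difficulty is getting the factor $2$ in front of $k(\nabla^\Sigma f,\nu)$ correct when differentiating $k(\nu,\nu)$ and then consistently tracking signs through the integration by parts; once that is handled carefully the formula falls out.
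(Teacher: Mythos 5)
Your proposal is correct and follows essentially the same route as the paper: the known first variation of the Willmore part, the variation formula $\dot P = f\,\nabla_\nu\tr k - f\,\nabla_\nu k(\nu,\nu)+2k(\nabla^\Sigma f,\nu)$, an integration by parts of the $k(\nabla^\Sigma f,\nu)$ term into $P\diver_\Sigma(k(\cdot,\nu))+k(\nabla^\Sigma P,\nu)$, and a Lagrange multiplier $\lambda H$ for the area constraint. The only cosmetic point is that the admissible variations satisfy $\int_\Sigma Hf\,d\mu=0$ rather than being literally mean-zero, but since you introduce the multiplier as $\lambda H$ this does not affect the argument.
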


\begin{proof}
Let $\Sigma \subset M$ be a surface and let $f: \Sigma \times (-\epsilon , \epsilon) \rightarrow M$ be a variation of $\Sigma$ with $f(\Sigma, s)= \Sigma_s$ and lapse $\frac{\partial f}{\partial s }_{|s=0}=\alpha \nu $. In \cite[Section 3]{willflat}, it was shown that the first variation of the Willmore functional (\ref{willfunc}) is given by 
\begin{equation}
\begin{split}
  \frac{1}{4}  \frac{d}{d s}\int_{\Sigma_s} H^2  d\mu_{| s=0}  = \int_{\Sigma_s} \left( -\Delta^\Sigma  H - H|\mathring{B}|^2- H\mathrm{Ric}(\nu, \nu)  \right) \alpha \, d\mu,
    \end{split}
\end{equation}
now let's compute the variation of $\frac{1}{2} \int_\Sigma P^2  d\mu $. In \cite{Ce}, it was shown that the variation of $P$ is given by
\begin{equation}
    \frac{d\, P }{d s}_{| s=0}=\left( \nabla_\nu \tr k - \nabla_\nu k(\nu, \nu)\right)\alpha +2 k(\nabla \alpha, \nu),
\end{equation}
using this relation and integration by parts we have 
\begin{equation}
\begin{split}
 \frac{1}{4}  \frac{d}{d s}\int_{\Sigma_s} P^2  d\mu_{| s=0}  =& \int_{\Sigma_s} \frac{1}{2} P^2 H  \alpha +P\left( \nabla_\nu \tr k - \nabla_\nu k(\nu, \nu)\right)\alpha +2P k(\nabla \alpha, \nu)  d\mu\\
 =\int_{\Sigma_s}\big( &\frac{1}{2} P^2 H   +P\left( \nabla_\nu \tr k - \nabla_\nu k(\nu, \nu)\right) -2P \diver_\Sigma \left( k(\cdot, \nu) \right )\\
 &- 2 k(\nabla^\Sigma P, \nu ) \big) \alpha  d\mu.
    \end{split}
\end{equation}
We are considering area constrained surfaces, which means surfaces whose variation of area is zero. This traduces to the area constraint $\int_\Sigma H \alpha d\mu =0 $. Then our surfaces must satisfy the area constraint and 
\begin{equation*}
\begin{split}
 &0=\frac{1}{2}  \left( \frac{d}{d s}\int_{\Sigma_s} H^2  d\mu_{|s=0} -  \frac{d}{d s}\int_{\Sigma_s} P^2  d\mu_{|s=0} \right)  =\\
 &\int_{\Sigma_s} \big( -\Delta^\Sigma H - H|\mathring{B}|^2- H \mathrm{Ric}(\nu, \nu) - \frac{1}{2} P^2 H   -P\left( \nabla_\nu \tr k - \nabla_\nu k(\nu, \nu)\right) +2P \diver_\Sigma \left( k(\cdot, \nu) \right )\\
 &+ 2 k(\nabla^\Sigma P, \nu ) \big) \alpha  d\mu
    \end{split}
\end{equation*}
 Then combining  this expression and the area constraint give us the Euler Lagrange equation (\ref{eulag}).
\end{proof}
Note that this result is equivalent to \cite[Lemma 2.8]{Alex}, and it reduces to the Willmore equation (\ref{Willeq}) in case $k=0$.

 Friedrich proved in \cite{Friedrich2020}  the existence of a foliation of critical surfaces of the Hawking functional in asymptotically Schwarschild manifolds, and also proved that the Hawking energy is monotonically nondecreasing along the foliation. Now we will show that if the dominant energy condition holds, the Hawking energy is nonnegative on these surfaces. This holds in more general conditions that the ones considered by Friedrich (it holds when assuming general  asymptotic flatness). First, recall that the dominant energy condition is given by 
\begin{equation}
    \mu \geq |J|
\end{equation}
where 
\begin{equation}
     \mathrm{Sc} + (\tr k)^2 - |k|^2=2 \mu \quad \text{and} \quad  \diver (k -(\tr k) g)= J
\end{equation}
are the energy density and the momentum density of the Einstein constraint equations. In particular, the dominant energy condition implies $\mu \geq 0$ which also implies $ \mathrm{Sc} + \frac{2}{3}(\tr k)^2 \geq 0 $.

\begin{theorem}\label{positivity}
Assuming that on an asymptotically flat initial data set $(M,g,k)$, where $k$ decays like $ |k|+ |\nabla k| |x| \leq C|x|^{-\frac{3}{2}-\epsilon}$ for some constant $C>0$ and $\epsilon \in (0, \frac{1}{2})$ and  the dominant energy conditions holds. There exist an $r_0>0$ such that for $r\geq r_0$,  if $\Sigma_r$ is a critical surface of the Hawking energy  with area radius $r$ ( $|\Sigma_r |=4 \pi r^2$), it is almost  centered ($|x|$ the distance to the origin of any point in $\Sigma_r$ is comparable to $r$), the Lagrange parameter $ \lambda$ is positive with  $ \lambda = \mathcal{O}(r^{-3})$  and also the mean curvature is positive with $H = \mathcal{O}(r^{-1}) $  then  the Hawking energy on $\Sigma_r$ is nonnegative.
\end{theorem}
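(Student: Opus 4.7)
The plan is to rewrite $\mathcal{E}(\Sigma_r)$, via Gauss--Bonnet and the Euler--Lagrange equation (\ref{eulag}), as a sum of nonnegative terms plus a remainder that the dominant energy condition and asymptotic flatness make manageable. Since $\Sigma_r$ is a topological sphere, the Gauss equation $K_\Sigma=\tfrac12\mathrm{Sc}-\mathrm{Ric}(\nu,\nu)+\tfrac14 H^2-\tfrac12|\mathring{B}|^2$ integrated against $d\mu$ combined with Gauss--Bonnet gives
\[
\mathcal{E}(\Sigma_r)=\frac{1}{4\pi}\sqrt{\frac{|\Sigma_r|}{16\pi}}\Big(\int_{\Sigma_r}K_M\,d\mu+\tfrac14\int_{\Sigma_r}P^2\,d\mu-\tfrac12\int_{\Sigma_r}|\mathring{B}|^2\,d\mu\Big),
\]
where $K_M=\tfrac12\mathrm{Sc}-\mathrm{Ric}(\nu,\nu)$ is the sectional curvature of the tangent plane. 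Proving $\mathcal{E}(\Sigma_r)\geq 0$ therefore reduces to bounding $\int|\mathring{B}|^2$ from above.

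Using $H>0$, I would next divide (\ref{eulag}) by $H$ and integrate, applying the identity $\int\Delta^\Sigma H/H\,d\mu=\int|\nabla^\Sigma H|^2/H^2\,d\mu$ (which follows by integrating $\diver(\nabla^\Sigma H/H)$). Solving the resulting equation for $\int|\mathring{B}|^2$, substituting back into the formula above, and using $2K_M+\mathrm{Ric}(\nu,\nu)=\mathrm{Sc}-\mathrm{Ric}(\nu,\nu)$, I would arrive at the master identity
\[
\mathcal{E}(\Sigma_r)=\frac{\sqrt{|\Sigma_r|/(16\pi)}}{8\pi}\Big(\lambda|\Sigma_r|+\int\frac{|\nabla^\Sigma H|^2}{H^2}\,d\mu+\int P^2\,d\mu+\int(\mathrm{Sc}-\mathrm{Ric}(\nu,\nu))\,d\mu+\int\frac{\mathcal{K}[k]}{H}\,d\mu\Big),
\]
where $\mathcal{K}[k]$ collects the $k$-dependent terms of (\ref{eulag}). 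The first three interior terms are manifestly nonnegative; by the hypothesis $\lambda>0$ with $\lambda=\mathcal{O}(r^{-3})$ and $|\Sigma_r|=4\pi r^2$, the term $\lambda|\Sigma_r|$ is of order $r^{-1}$. As a sanity check, applying this identity to a coordinate sphere in vacuum Schwarzschild recovers $\mathcal{E}(\Sigma_r)\to m_{ADM}$.

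The final step is to estimate $\int(\mathrm{Sc}-\mathrm{Ric}(\nu,\nu))+\int H^{-1}\mathcal{K}[k]$ under the DEC and the stated decay. I would integrate by parts the $\diver_\Sigma(k(\cdot,\nu))$ and $k(\nabla^\Sigma P,\nu)$ pieces hidden in $\mathcal{K}[k]$ to move derivatives onto $1/H$, then use the almost-centered hypothesis together with standard asymptotic expansions on approximate coordinate spheres to rewrite the remainder as a multiple of $\int 2\mu\,d\mu=\int(\mathrm{Sc}+(\tr k)^2-|k|^2)\,d\mu\geq 0$, plus an error of size $O(r^{-2\epsilon})$ coming from the decay $|k|+|\nabla k||x|\leq C|x|^{-3/2-\epsilon}$. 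For $r_0$ chosen large enough this error is absorbed into the strictly positive term $\lambda|\Sigma_r|$, yielding $\mathcal{E}(\Sigma_r)\geq 0$. The main obstacle is precisely this reorganization: the $k$-pieces of (\ref{eulag}) are first-order derivatives of $k$ and products of $k$ with $\nu$, and they do not collect automatically into the DEC scalar without combining integration by parts along $\Sigma_r$ with the Codazzi identity. The relatively weak decay $|x|^{-3/2-\epsilon}$ (slower than the classical $|x|^{-2-\epsilon}$) is what forces $\lambda|\Sigma_r|$, rather than a pointwise curvature inequality, to be the leading positive contribution, and keeping all the error bounds uniform over almost-centered critical surfaces is the delicate point.
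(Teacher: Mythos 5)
Your overall strategy is the same as the paper's: divide the Euler--Lagrange equation (\ref{eulag}) by $H$, integrate, convert $\int \Delta^\Sigma H/H$ into $\int|\nabla\log H|^2$, combine with the Gauss equation and Gauss--Bonnet, and let the dominant energy condition plus the decay of $k$ control what remains. Your first display and your ``master identity'' are both algebraically correct (they are equivalent, modulo the integrated Euler--Lagrange identity, to the one the paper derives). The problem is in the final step, and it is caused by your choice of which term to eliminate. By solving the integrated equation for $\int|\mathring{B}|^2$ and substituting, you reintroduce the ambient term $-\int_{\Sigma_r}\mathrm{Ric}(\nu,\nu)\,d\mu$. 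This term is \emph{not} an error: on a large almost-centered sphere it is of order $r^{-1}$ and carries the ADM mass (in Schwarzschild, $-\int\mathrm{Ric}(\nu,\nu)\approx 8\pi m/r$, and in your own sanity check it supplies half of $m_{ADM}$). It is the same order as $\lambda|\Sigma_r|$, its sign is not controlled by the dominant energy condition (DEC constrains $\mathrm{Sc}+(\tr k)^2-|k|^2$, not $\mathrm{Ric}(\nu,\nu)$), and it cannot be ``rewritten as a multiple of $\int 2\mu$ plus a small error'' nor absorbed into $\lambda|\Sigma_r|$ by taking $r$ large. Relatedly, your claimed error size $O(r^{-2\epsilon})$ is itself fatal even if it were achievable: with $\epsilon\in(0,\tfrac12)$ one has $r^{-2\epsilon}\gg r^{-1}=\lambda|\Sigma_r|$, so such an error could never be absorbed; the genuinely small terms in this problem are $O(r^{-1-2\epsilon})$ after integration, which is what makes the absorption work.

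The paper avoids this by eliminating $\mathrm{Ric}(\nu,\nu)$ rather than $|\mathring{B}|^2$: substituting $2\mathrm{Ric}(\nu,\nu)=\mathrm{Sc}-\mathrm{Sc}^{\Sigma_r}+H^2-|B|^2$ into the integrated equation leaves $+\tfrac12|\mathring{B}|^2$ (nonnegative), $\tfrac12\int\mathrm{Sc}^{\Sigma_r}=4\pi$ exactly by Gauss--Bonnet, and $+\tfrac14 H^2$, which recombines with $-\tfrac14P^2$ into precisely the quantity $\tfrac14\int(H^2-P^2)$ one wants to bound by $4\pi$. After adding and subtracting $\tfrac13(\tr k)^2$ so that DEC gives $\mathrm{Sc}+\tfrac23(\tr k)^2\geq 0$, the only negative contributions left are $-\tfrac13\int(\tr k)^2$ and the $k$-terms bounded crudely by $\int C|k||\nabla k|/H$, both $O(r^{-1-2\epsilon})$, which the positive $\lambda$-term dominates. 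No integration by parts of the $\diver_\Sigma(k(\cdot,\nu))$ pieces, no Codazzi identity, and no asymptotic expansion of $\mathrm{Ric}$ is needed. To repair your argument you would have to apply the Gauss equation and Gauss--Bonnet a second time to your remainder $\int(\mathrm{Sc}-\mathrm{Ric}(\nu,\nu))$, which lands you back on the paper's identity.
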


\begin{proof}
According to (\ref{hawkingmass2}), it is enough to see that $\int_{\Sigma_r} H^2 -P^2 d\mu \leq 16 \pi$. We proceed similarly as in \cite[Theorem 4]{willflat}. We consider  equation (\ref{eulag}), divided by $H$, integrate by parts the term $\frac{\Delta H}{H}$ and use the Gauss equation $2\mathrm{Ric}(\nu, \nu) = \mathrm{Sc} -\mathrm{Sc}^{\Sigma_r} + H^2 -|B|^2  $ obtaining
\begin{equation*}
\begin{split}
   0=  \int_{\Sigma_r}&\lambda+ |\nabla \log H|^2 + \frac{1}{2} |\mathring{B}|^2+  \frac{1}{2}(\mathrm{Sc} -\mathrm{Sc}^{\Sigma_r}) +\frac{P}{H}( \nabla_\nu \tr k - \nabla_\nu k(\nu,\nu ))\\& +\frac{1}{4}H^2 + \frac{1}{2} P^2 - 2\frac{P}{H} \diver_\Sigma (k(\cdot, \nu))- \frac{2}{H} k (\nabla^\Sigma P, \nu )d\mu. 
    \end{split}
\end{equation*}
  We can estimate for some constant $C$
\begin{equation*}
    \int_{\Sigma_r}\lambda+ |\nabla \log H|^2 + \frac{1}{2} |\mathring{B}|^2   +\frac{1}{4}H^2  +\frac{1}{2} P^2  - \frac{C}{H}|k| |\nabla k| d\mu \leq  - \int_{\Sigma_r} \frac{1}{2}(\mathrm{Sc} -\mathrm{Sc}^{\Sigma_r})  d\mu.
\end{equation*}
Now using  Gauss-Bonnet theorem to replace $ \mathrm{Sc}^{\Sigma_r}$  and subtracting $\frac{1}{3}(\tr k)^2$ on both sides we have 
\begin{equation*}
\begin{split}
    &\int_{\Sigma_r}\lambda+ |\nabla \log H|^2 +\frac{1}{4}(H^2- P^2)  +\frac{3}{4} P^2 - \frac{1}{3}(\tr k)^2+ \frac{1}{2} |\mathring{B}|^2   - \frac{C}{H}|k| |\nabla k| d\mu \\& \leq 4 \pi - \int_{\Sigma_r} \frac{1}{2}(\mathrm{Sc}+\frac{2}{3} (\tr k)^2) d\mu.
    \end{split}
\end{equation*}
Now  thanks to the dominant energy condition, we have $\mathrm{Sc}-\frac{2}{3} (\tr k)^2 \geq 0  $ and by the decay conditions of the assumptions, it is direct to see that for $r$ large enough
$$0 \leq\int_{\Sigma_r}\lambda+ \frac{3}{4} P^2 - \frac{1}{3}(\tr k)^2  - \frac{C}{H}|k| |\nabla k| d\mu,  $$
then it follows directly that  $\int_{\Sigma_r} H^2 -P^2 d\mu \leq 16 \pi$. 
\end{proof}

\begin{remark}
 Note that the foliation constructed in \cite{Friedrich2020} satisfies the conditions of the previous result. This shows that these  surfaces have the same desired properties as the Willmore surfaces in the totally geodesic case ($k=0$) when evaluating the Hawking energy.
\end{remark}

    To produce our foliations, we will use the fact that geodesics spheres of small radius around a point $p\in M$ form a foliation, and this foliation can be perturbed in a suitable way. The perturbation procedure consists of a normal perturbation to the geodesics spheres and a perturbation of their center. For this procedure, we will consider the  setup  considered in \cite{Me},  which is like the one considered  in  \cite{ikoma,locwill,Ye} when $k=0$.

Denote by $R_p$ the injectivity radius of $p$  and define $r_p:= \frac{1}{8} R_p$. we will also denote  $\mathbb{B}_r:=\{x\in\mathbb{R}^{3}: ||x ||<r  \} $ and  $\mathbb{S}^2_r:=\{x\in\mathbb{R}^{n+1}: ||x ||=r  \} $ where $||\cdot ||$ is the euclidean norm.  

For $\tau \in \mathbb{R}^{3} $ with $|| \tau|| < r_p$ we define $F_\tau : \mathbb{B}_{2r_p} \rightarrow M $ by
\begin{equation}\label{coordinates}
    F_\tau(x)= \exp_{c(\tau)}(x^i e^\tau_i), 
\end{equation}
where $c(\tau)= \exp_p (\tau^i e_i)$, $e_i$ are an orthonormal basis of $T_p M$ and $e_i^\tau$ their parallel transport to $c(\tau)$ along the geodesic $c(t \tau )_{0\leq t \leq 1
}$.  Consider also the dilation $\alpha_r(x)=rx $ for $r>0$. For each $\tau$ and $0<r<r_p$, the map $F_\tau \circ \alpha_r $ gives rise to some rescaled normal coordinates centered at $c(\tau)$, in particular, the metric $g$ in these coordinates satisfies that 
$$g_{ij}(r x)= r^2 (\delta_{ij} + \sigma_{ij}(x r)) $$
where $\delta$ detones the euclidean metric and  $\sigma$ satisfies $ |\sigma_{ij}(x)|\leq |x|^2$, we denote this by $g_{ij}(rx)= r^2 (\delta_{ij} + \mathcal{O}(|x|^2 r^2))$.

As in \cite{locwill}, let $\Omega_1= \{ \varphi \in \mathcal{C}^{4, \frac{1}{2}}(\mathbb{S}^2) \; |\; ||\varphi ||_{\mathcal{C}^{4, \frac{1}{2}}(\mathbb{S}^2)} <\delta_0 \}$   with $\delta_0>0$ so small that $ S_\varphi :=\{x+ \varphi(x) \nu(x): x \in \mathbb{S}^2 \}$  is an embedded $\mathcal{C}^4$ surface in  $\mathbb{R}^{3} $, and    where $\nu$ is the unit normal to $\mathbb{S}^n$.  Define the map $\Tilde{\Phi}: (0,r_p) \times \mathbb{B}_{2r_p} \times \Omega_1 \times \mathbb{R} \rightarrow \mathcal{C}^\frac{1}{2} (\mathbb{S}^2)  $ given by 
\begin{equation}
\begin{split}
    \Tilde{\Phi}(r, \tau , \varphi , \lambda)=&\lambda H  +\Delta^\Sigma H + H|\mathring{B}|^2+ H \mathrm{Ric}(\nu, \nu)+\frac{1}{2}H P^2+P( \nabla_\nu \tr k - \nabla_\nu k(\nu,\nu ))  \\&- 2P \diver_\Sigma (k(\cdot, \nu))- 2k (\nabla^\Sigma P, \nu ),
    \end{split}
\end{equation}
where the expression of the right is evaluated for  $\Sigma= F_\tau (\alpha_r (S_\varphi))$ at $F_\tau (r(x+ \varphi(x) \nu))$ with respect to $g$. Note that this is the equation that characterizes the area constrained critical surfaces of  the Hawking functional. To find a foliation,  we look for  some functions $\tau(r)$, $\varphi(r)$ and $\lambda(r)$ such that $ \Tilde{\Phi}(r, \tau(r) , \varphi(r) , \lambda(r))=0 $ for some $r \in (0, r_0)$, then our surfaces  $\Sigma_r= F_{\tau(r)} (\alpha_r (S_\varphi(r)))$ are parameterized by   $r$ and with some extra work one can see that they form a foliation.

In order to find these functions, we will use the implicit function theorem, but in an auxiliary manifold $(\mathbb{B}_{2r_p}, g_{\tau,r}= r^{-2} \alpha_r^*(F_\tau^*(g)),$ $k_{\tau, r}= r^{-1} \alpha_r^*(F_\tau^*(k)) )$  this manifold  is useful since  its metric is conformal to $g$ in the $F_\tau \circ \alpha_r $ coordinates and when $r=0$,  $g_{\tau,0}$  is just the euclidean metric  and $k_{\tau,0}=0$, allowing us to work with an $r$ arbitrarily small. Furthermore, we define the operator 
\begin{equation}\label{resc}
\begin{split}
    \Phi(r, \tau , \varphi , \lambda)=&r^2 \lambda H_{r,\tau}  +\Delta_{r,\tau}^\Sigma H_{r,\tau} + H_{r,\tau}|\mathring{B}_{r,\tau}|^2+ H_{r,\tau} \mathrm{Ric}_{r,\tau}(\nu_{r,\tau}, \nu_{r,\tau})+\frac{1}{2}H_{r,\tau} P_{r,\tau}^2\\&+P_{r,\tau}( \nabla_{\nu_{r,\tau}} \tr k_{r,\tau} - \nabla_{\nu_{r,\tau}} k_{r,\tau}(\nu_{r,\tau},\nu_{r,\tau} ))  - 2P_{r,\tau} \diver_\Sigma (k_{r,\tau}(\cdot, \nu_{r,\tau}))\\&- 2k_{r,\tau} (\nabla^\Sigma P_{r,\tau}, \nu_{r,\tau} )
    \end{split}
\end{equation}
where the right hand side is evaluated on $\Sigma= S_\varphi $ at $x+ \varphi(x) \nu(x) $ with respect to $g_{\tau,r}$ on $\mathbb{B}_2 $ (we denote this by the subindex $r,\tau$). The convenience of this operator on the auxiliary manifold is that the metric $g_{\tau,r}$ is  conformal to $g$ in the coordinates $F_\tau \circ \alpha_r $ with conformal factor $r^2$, $k_{r,\tau}$ is also conformal to $k$ and then using how the different terms on  (\ref{resc}) transform under this conformal transformation (for instance, $H_{r,\tau}= r H$, $\nu_{r,\tau}= r \nu $, $P_{r,\tau}= r P $ etc) one obtains the following relation
\begin{equation}
    \begin{split}
       \Phi(r, \tau , \varphi , \lambda)=&r^3 \Tilde{\Phi}(r, \tau , \varphi , \lambda)
    \end{split}
\end{equation}
 and therefore, if we manage to find a surface satisfying $\Phi(r, \tau , \varphi , \lambda)=0$ we then have an area constrained critical surfaces of the Hawking functional in our original manifold.

Note that the operator (\ref{resc}) can be decomposed into two parts,  one that doesn't depend on $k$ that we denote by $W_1$,  and another that depends on $k$ which we denote by  $W_2 $. Then we have $\Phi(r, \tau , \varphi , \lambda)=(W_1+ W_2)(r, \tau , \varphi, \lambda) $ where
\begin{equation}
    W_1(r, \tau , \varphi, \lambda):= r^2 \lambda H_{r,\tau}  +\Delta_{r,\tau}^\Sigma H_{r,\tau} + H_{r,\tau}|\mathring{B}_{r,\tau}|^2+ H_{r,\tau} \mathrm{Ric}_{r,\tau}(\nu_{r,\tau}, \nu_{r,\tau})
\end{equation}
and
\begin{equation}
\begin{split}
    W_2(r, \tau , \varphi, \lambda):=& \frac{1}{2}H_{r,\tau} P_{r,\tau}^2+P_{r,\tau}( \nabla_{\nu_{r,\tau}} \tr k_{r,\tau} - \nabla_{\nu_{r,\tau}} k_{r,\tau}(\nu_{r,\tau},\nu_{r,\tau} ))  \\ &- 2P_{r,\tau} \diver_\Sigma (k_{r,\tau}(\cdot, \nu_{r,\tau}))  - 2k_{r,\tau} (\nabla^\Sigma P_{r,\tau}, \nu_{r,\tau} )
    \end{split}
\end{equation}
Note that $ W_1(r, \tau , \varphi, \lambda) $ corresponds to the Willmore operator whose  local behaviour has been studied in many different papers like in \cite{ToMet}, \cite{locwill} and \cite{ikoma} among others.

From now on, we will denote by $A^\tau(x)$  a tensor evaluated at $F_\tau(x)$ and then  $A^\tau$(0) is the tensor  evaluated at the point $c(\tau)$. Also  if $\tau=0$, we omit the superscript i.e., $A^0=A$.

Now let's see the operator (\ref{resc}) when one considers a geodesic sphere, that is, when $\varphi$ is equal to zero.
\begin{lemma}
Considering the setting of above one has 
 \begin{equation}
     W_1(r, \tau , 0, \lambda)=  r^2 (2 \lambda - \frac{2}{3}\mathrm{Rs}^\tau(0) + 4 \mathrm{Ric}^\tau_{pq}(0) x^p x^q ) + r^3( 5  \mathrm{Ric}^\tau_{pq,s}(0) x^p x^q x^s  -\mathrm{Rs}_{,p}  x^p )+ \mathcal{O}(r^4). 
 \end{equation}
\begin{equation}\label{sphep}
\begin{split}
    W_2(r, \tau , 0, \lambda)=& r^2\big( -(\tr k^\tau)^2 + (2\tr k^\tau \, k^\tau_{ij}
    + 4 k^\tau_{si} \, k^\tau_{sj})x^i x^j  -5  k^\tau_{ij}\, k^\tau_{pq} x^ix^j x^px^q  \big)\\
    &+ r^3 \big( \big(\frac{\partial_i (\tr k^\tau)^2}{2}  - 2\partial_s ( \tr k^\tau \, k^\tau_{ s i}) \big) x^i   +(\partial_s (\tr k^\tau k^\tau_{ij}) + 2 \partial_t (k^\tau_{ij} k^\tau_{ts}) )x^i x^j x^s \\
    &-3 k^\tau_{ij} \, k^\tau_{pq,s} x^i x^j x^px^q x^s \big) +\mathcal{O}(r^4).
    \end{split}
\end{equation}
Where $k^\tau=k^\tau(rx)$.  In particular, $\Phi(r, \tau , 0, \lambda)=(W_1+ W_2)(r, \tau , 0, \lambda) $.
\end{lemma}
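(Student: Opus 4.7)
The plan is to Taylor-expand every geometric quantity in (\ref{resc}) in powers of $r$, exploiting the fact that at $r=0$ the rescaled metric $g_{r,\tau}$ reduces to the Euclidean metric on $\mathbb{B}_2$, the tensor $k_{r,\tau}$ vanishes, and the surface $\mathbb{S}^2_\varphi$ with $\varphi=0$ is just the standard unit sphere in $\mathbb{R}^3$. Since $\Phi = W_1 + W_2$ by construction, the two expansions are handled separately. For $W_1$ the formula is the standard asymptotics of the Willmore operator on geodesic spheres, which has been worked out in \cite{locwill} and \cite{ikoma} (the $k=0$ case), so I would simply quote it. The substantive content is therefore the expansion of $W_2$.

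First I would record the ingredient Taylor expansions in normal coordinates centered at $c(\tau)$,
\begin{align*}
g_{r,\tau,ij}(x) &= \delta_{ij} - \tfrac{r^2}{3} R^\tau_{ipjq}(0)\, x^p x^q - \tfrac{r^3}{6} R^\tau_{ipjq;s}(0)\, x^p x^q x^s + \mathcal{O}(r^4),\\
k_{r,\tau,ij}(x) &= r\, k^\tau_{ij}(rx) = r\, k^\tau_{ij}(0) + r^2 k^\tau_{ij,s}(0)\, x^s + \mathcal{O}(r^3),
\end{align*}
from which I would derive the expansions on the unit sphere in the rescaled metric: $\nu_{r,\tau}=x+\mathcal{O}(r^2)$, $(g_{r,\tau,\Sigma})_{ab}=(g_{\mathbb{S}^2})_{ab}+\mathcal{O}(r^2)$, $H_{r,\tau}=2+\mathcal{O}(r^2)$, $|\mathring B_{r,\tau}|^2=\mathcal{O}(r^4)$, and in particular
\[
P_{r,\tau}=r\,\bigl(\tr k^\tau(0) - k^\tau_{ij}(0)\, x^i x^j\bigr) + \mathcal{O}(r^2).
\]

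Next, I would treat each of the four summands of $W_2$ in turn. At the leading order $r^2$, the contributing pieces are: (a) $\tfrac{1}{2}H_{r,\tau} P_{r,\tau}^2 = r^2 P_0^2 + \mathcal{O}(r^3)$ with $P_0 := \tr k^\tau - k^\tau_{ij} x^i x^j$; (b) $-2\, k_{r,\tau}(\nabla^\Sigma P_{r,\tau}, \nu_{r,\tau})$, computed from $\nabla^{\mathbb{S}^2}(k^\tau_{ij}x^ix^j)=2k^\tau x - 2(k^\tau_{ij}x^ix^j)x$ (tangentially projected); and (c) $-2 P_{r,\tau}\diver_\Sigma\bigl(k_{r,\tau}(\cdot,\nu_{r,\tau})\bigr)$, which I would evaluate via the ambient identity $\diver_\Sigma W^T = \diver_M W - \langle \nabla_\nu W, \nu\rangle - H\langle W,\nu\rangle$ applied to $W=k^\tau x$, yielding the leading contribution $\tr k^\tau - 3 k^\tau_{ij}x^ix^j$. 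The fourth summand, $P_{r,\tau}(\nabla_\nu \tr k_{r,\tau}-\nabla_\nu k_{r,\tau}(\nu,\nu))$, only enters at order $r^3$, because covariant differentiation of $k_{r,\tau}$ costs one extra power of $r$. Adding (a)--(c) then produces exactly the $r^2$ coefficient in (\ref{sphep}).

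The main obstacle is the bookkeeping at order $r^3$, where one must simultaneously track the order-$r^2$ curvature corrections to $\nu_{r,\tau},H_{r,\tau},g_{r,\tau,\Sigma}$ and to the Christoffel symbols appearing in $\nabla_\nu,\diver_\Sigma,\nabla^\Sigma$; the linear Taylor term $r^2 k^\tau_{ij,s}(0) x^s$ in $k_{r,\tau}$; the order-$r^3$ pieces of (a)--(c); and the leading contribution of (d). A subtle point, already visible at order $r^2$ in step (c), is that $\diver_\Sigma$ does not simply raise the order, because $k(\cdot,\nu)$ is not tangential to $\mathbb{S}^2$; the ambient-divergence identity above is what keeps this accounted for. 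Collecting every contribution produces the stated $r^3$ coefficient, with the $\mathcal{O}(r^4)$ remainder uniform in $\tau$ and $\lambda$ on the relevant compact set by smoothness of $g$ and $k$, and the identity $\Phi=W_1+W_2$ closes the argument.
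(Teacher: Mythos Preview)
Your proposal is correct and follows essentially the same strategy as the paper: quote \cite{locwill} for $W_1$ and carry out a direct Taylor expansion for $W_2$. The only organisational difference is that you work in the rescaled picture $(g_{r,\tau},k_{r,\tau})$ on the unit sphere, whereas the paper invokes the conformal relation $W_2=r^3(\cdot)$ and expands the unrescaled quantities on the geodesic sphere using a parallel tangent frame; your ambient-divergence identity for $\diver_\Sigma(k(\cdot,\nu))$ is exactly the paper's frame computation repackaged, and your $r^2$ contributions from (a)--(c) match the paper's line by line.
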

\begin{proof}
In \cite[Proposition 2.3]{locwill} it was shown that 
$$W_1(r, \tau , 0, \lambda)=  r^2 (2 \lambda - \frac{2}{3}\mathrm{Sc}^\tau(0) + 4 \mathrm{Ric}^\tau_{pq}(0) x^p x^q ) + r^3( 5  \mathrm{Ric}^\tau_{pq,s}(0) x^p x^q x^s  -\mathrm{Sc}^\tau_{,p}(0)  x^p )+ \mathcal{O}(r^4) $$
In the rest of the proof we omit the superindex $\tau$ for simplicity. Now considering the rescaling, we have 
\begin{equation}\label{W2}
\begin{split}
    W_2(r, \tau , \varphi , \lambda)=&r^3 \big(\frac{1}{2}H P^2+P( \nabla_{\nu} \tr k - \nabla_{\nu} k(\nu,\nu ))  - 2P \diver_\Sigma (k(\cdot, \nu)) - 2k (\nabla^\Sigma P, \nu )\big),
    \end{split}
\end{equation}
where the right hand side is evaluated on the geodesic sphere  $ F_\tau (\alpha_r (S^n)):=\Sigma$ using the metric $g$.  Consider a  local frame  $e_i \in TM $ $i=1,2,3$. We use Latin letters as indices to denote the whole frame $ i,j,r,s,t...$ and Greek letters $\alpha, \beta$ just to denote the vectors tangent to $\Sigma$.  We use the Einstein summation convention, and for the sake of simplicity, we omit writing the metric $g^{ij}$ when two indices are contracted. 

First, let us expand  the last two terms of (\ref{W2}).
\begin{equation}
    \begin{split}
        \diver_\Sigma (k(\cdot, \nu)) &= e_\alpha \left(  k(e_\alpha, \nu) \right)= \nabla_{e_\alpha} k(e_\alpha, \nu) + k(\nabla_{e_\alpha} e_\alpha , \nu) + k(e_\alpha, \nabla_{e_\alpha} \nu)\\
        &= \nabla_{e_i} k (e_i, \nu) - \nabla_{\nu} k(\nu, \nu) + k (\nabla_{e_\alpha} e_\alpha , \nu) + g^\Sigma (k, B),
    \end{split}
\end{equation}
where $g^\Sigma (k, B)=  g^{\Sigma\alpha \gamma} g^{\Sigma \beta \sigma } k_{\alpha \beta} B_{\gamma \sigma}$.
\begin{equation}
    \nabla^\Sigma_{e_\alpha} P= e_\alpha (\tr k - k(\nu, \nu))= \nabla_{e_\alpha} k(e_i, e_i) - \nabla_{e_\alpha} k(\nu, \nu) + 2 k(\nabla_{e_\alpha}e_\beta , e_\beta).
\end{equation}
 Now introducing these terms in (\ref{W2}) we have
\begin{equation}\label{W22}
\begin{split}
    W_2(r, \tau , \varphi, \lambda)=&r^3 \Big(\frac{1}{2}H P^2+P( \nabla_{\nu} \tr k - \nabla_{\nu} k(\nu,\nu ))  - 2P\big(\nabla_{e_i} k (e_i, \nu) - \nabla_{\nu} k(\nu, \nu) +  g^\Sigma (k, B)\\ &
    +  k (\nabla_{e_\alpha} e_\alpha , \nu)\big)- 2k_{\alpha j} \nu^j\big( \nabla_{e_\alpha} k(e_i, e_i) - \nabla_{e_\alpha} k(\nu, \nu) + 2 k(\nabla_{e_\alpha}e_\beta , e_\beta)\big) \Big).
    \end{split}
\end{equation}
Now using that for a geodesic sphere, one has $H(r,\tau, 0, \lambda)= \frac{2}{r} - \frac{r^2}{3}  \mathrm{Ric}_{ij}\, x^i x^j- \frac{r^3}{4} \mathrm{Ric}_{ij,l}\, x^i x^j x^l +\mathcal{O}(r^4)    $ (this expression can be found in \cite{Ye}) where $\mathrm{Ric}$ is evaluated at $c(\tau)$,  $B(r,\tau, 0, \lambda)= r^{-1} g^\Sigma + \mathcal{O}(r^2)$,  $\nabla_{\nu} \nu = \mathcal{O}(r^2)$ and taking the frame such that $\nabla_{e_i} e_j = \mathcal{O}(r^2) $.

\begin{equation}\label{W23}
\begin{split}
    W_2(r, \tau , 0, \lambda)=& r^2 P^2+r^3 P( \nabla_{\nu} \tr k - \nabla_{\nu} k(\nu,\nu ))  - 2r^3 P\big(\nabla_{e_i} k (e_i, \nu) - \nabla_{\nu} k(\nu, \nu) - k (\nabla_{\nu} \nu , \nu)\\ &
    + \frac{1}{r} P\big)- 2r^3k(e_j, \nu) \nabla_{e_j} k(e_i, e_i) +2 r^3k(\nu, \nu) \nabla_{\nu} k(e_i, e_i) +2r^3k(e_i, \nu) \nabla_{e_i} k(\nu, \nu)\\&
    -2 r^3k(\nu, \nu) \nabla_{\nu} k(\nu, \nu) + 4r^2 k(e_i, \nu)\,k(e_i, \nu) -4r^2 k(\nu, \nu)\,k(\nu, \nu) +\mathcal{O}(r^4) \\
    =& r^2\big( 4 k(e_i, \nu)\,k(e_i, \nu) -4 k(\nu, \nu)\,k(\nu, \nu) -P^2 \big)+r^3  P\big( \nabla_{\nu} \tr k - \nabla_{\nu} k(\nu,\nu )\\
    &- 2\nabla_{e_i} k (e_i, \nu)+2 \nabla_{\nu} k(\nu, \nu)  \big) +2 r^3 \big(k(\nu, \nu) \nabla_{\nu} k(e_i, e_i) \\&
    - k(e_i, \nu) \nabla_{e_\alpha} k(e_i, e_i)  +k(e_i, \nu) \nabla_{e_i} k(\nu, \nu)
    -k(\nu, \nu) \nabla_{\nu} k(\nu, \nu) \big) +\mathcal{O}(r^4) \\
    =&-r^2 (\tr k)^2 + r^3(\tr k \, \partial_i \tr k - 2\tr k \, k_{i s, s} - 2 k_{s j}\, \partial_s \tr k) x^i  + r^2(2\tr k \, k_{ij}\\ &
    + 4 k_{si} \, k_{sj})x^i x^j +r^3(2 \tr k \,k_{ij,s} - \tr k k_{ij,s} - \partial_i \tr k\, k_{js} +2 k_{ij} \, k_{st,t}\\ &
    +2k_{ij} \, \partial_s \tr k+2 k_{st} \, k_{ij,t} )x^ix^jx^s -5 r^2 k_{ij}\, k_{pq} x^ix^j x^px^q -3r^3 k_{ij} \, k_{pq,s} x^i x^j x^px^q x^s\\
    & +\mathcal{O}(r^4)\\
    =& r^2\big( -(\tr k)^2 + (2\tr k\, k_{ij}+ 4 k_{si} \, k_{sj})x^i x^j  -5  k_{ij}\, k_{pq} x^ix^j x^px^q  \big)\\
    &+ r^3 \big( \big(\frac{\partial_i (\tr k)^2}{2}  - 2\partial_s ( \tr k \, k_{ s i}) \big)x^i    +(\partial_s (\tr k k_{ij}) + 2 \partial_t (k_{ij} k_{ts}) )x^i x^j x^s \\
    &-3 k_{ij} \, k_{pq,s} x^i x^j x^px^q x^s \big) +\mathcal{O}(r^4).
    \end{split}
\end{equation}
 \end{proof}
We have an analogous  result to  \cite[Lemma 3.2]{locwill}.
\begin{lemma}\label{3.2}
For every $\tau \in \mathbb{R}^3 $ and every $ \lambda \in \mathbb{R}$ we have that 
$$\Phi_{\varphi r}(0, \tau , 0, \lambda)=0, $$
where we denote  $ \Phi_{\varphi }(r, \tau , \varphi, \lambda)  \varphi'  = \frac{d}{dt} \Phi(r, \tau,  \varphi +t \varphi' , \lambda)   \vert_{t=0}.$
\end{lemma}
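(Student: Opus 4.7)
My plan is to exploit the decomposition $\Phi = W_1 + W_2$ introduced just before the statement. The summand $W_1$ is exactly the area-constrained Willmore operator (including the Lagrange term $r^2 \lambda H_{r,\tau}$), so the identity $W_{1,\varphi r}(0,\tau,0,\lambda)=0$ is precisely the content of \cite[Lemma 3.2]{locwill}. Consequently, it suffices to establish the analogous vanishing $W_{2,\varphi r}(0,\tau,0,\lambda)=0$ for the $k$-dependent piece.

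The central observation is a scaling estimate for $k_{r,\tau}$. From the definition $k_{r,\tau} = r^{-1}\alpha_r^* F_\tau^* k$ one reads off in components
\[
k_{r,\tau}(x) = r\, k(F_\tau(r x)),
\]
so $k_{r,\tau}$ vanishes to first order in $r$. A parallel computation for the Christoffel symbols of $g_{r,\tau}$ yields $\Gamma^{r,\tau}(x) = r\, \Gamma(F_\tau(r x))$, and hence $\nabla^{r,\tau} k_{r,\tau}(x) = r^2\, \nabla k(F_\tau(r x))$. These estimates hold smoothly in $(r,\tau,\varphi,\lambda)$ and uniformly over the bounded admissible range of $\varphi$. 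In particular $P_{r,\tau}$ and $k_{r,\tau}(\cdot,\nu_{r,\tau})$ are $\mathcal{O}(r)$, while $\nabla^{r,\tau} k_{r,\tau}$ is $\mathcal{O}(r^2)$.

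The next step is to inspect the four summands of $W_2$ and confirm that each is a product of at least two such small factors. The term $\tfrac12 H_{r,\tau} P_{r,\tau}^2$ is $\mathcal{O}(r^2)$; the two $\nabla k$ contributions $P_{r,\tau}(\nabla_{\nu_{r,\tau}}\tr k_{r,\tau} - \nabla_{\nu_{r,\tau}} k_{r,\tau}(\nu_{r,\tau},\nu_{r,\tau}))$ and $k_{r,\tau}(\nabla^\Sigma P_{r,\tau},\nu_{r,\tau})$ are both $\mathcal{O}(r^3)$; and the divergence term $P_{r,\tau}\,\diver_\Sigma(k_{r,\tau}(\cdot,\nu_{r,\tau}))$ is $\mathcal{O}(r^2)$. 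This last estimate is the subtle one, since when $\diver_\Sigma$ is expanded as in the proof of the previous lemma it produces a piece $P_{r,\tau}\, k_{r,\tau}(\nabla^{r,\tau}_{e_\alpha}e_\alpha,\nu_{r,\tau})$ in which $B_{r,\tau}$ is only $\mathcal{O}(1)$; nevertheless the two factors $P_{r,\tau}$ and $k_{r,\tau}$ still deliver $r^2$. Together these estimates yield a factorization $W_2(r,\tau,\varphi,\lambda) = r^2\, \widetilde{W}_2(r,\tau,\varphi,\lambda)$ for some $\widetilde{W}_2$ smooth in all its arguments.

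Linearization in $\varphi$ does not touch the $r^2$ prefactor, so $W_{2,\varphi}(r,\tau,0,\lambda) \varphi' = r^2\, \widetilde{W}_{2,\varphi}(r,\tau,0,\lambda) \varphi'$; differentiating in $r$ at $r=0$ gives $W_{2,\varphi r}(0,\tau,0,\lambda)=0$, which combined with the Willmore identity finishes the proof. The only real obstacle I expect is the divergence bookkeeping just described; the remainder amounts to routine order-counting, entirely parallel to the computation carried out for $W_2(r,\tau,0,\lambda)$ in the preceding lemma.
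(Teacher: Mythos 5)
Your proposal is correct and follows essentially the same route as the paper: decompose $\Phi = W_1 + W_2$, quote \cite[Lemma 3.2]{locwill} for $W_{1\varphi r}(0,\tau,0,\lambda)=0$, and kill $W_{2\varphi r}$ by showing that $W_2$ carries a uniform factor of $r^2$ which survives linearization in $\varphi$. The only cosmetic difference is that the paper handles the linearized mean-curvature and second-fundamental-form contributions by citing Ye's result $H_{\varphi r}(0,\tau,0,\lambda)=B_{\varphi r}(0,\tau,0,\lambda)=0$, whereas you reach the same conclusion from the explicit scalings $k_{r,\tau}=\mathcal{O}(r)$, $\nabla^{r,\tau}k_{r,\tau}=\mathcal{O}(r^2)$ and the resulting smooth factorization $W_2=r^2\widetilde W_2$.
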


\begin{proof}

First, we consider the terms depending on $k$, that is, expression (\ref{W22}). In \cite[Lemma 1.3]{Ye} it was shown that $H_{\varphi r}(0, \tau , 0, \lambda)=0$ and $B_{\varphi r}(0, \tau , 0, \lambda)=0$, then we have that  the terms of the linearization that don't depend on $B_{\varphi r}$  have order at least $\mathcal{O}(r^2)$ and therefore
$$W_{2\varphi r}(0, \tau , 0, \lambda) = \frac{\partial}{\partial r} W_{2\varphi }(r, \tau , 0, \lambda)_{|r=0} =0. $$
Finally in \cite[Lemma 3.2]{locwill} it was shown that $W_{1\varphi r}(0, \tau , 0, \lambda) =0$ and as $\Phi_{\varphi r}(0, \tau , 0, \lambda)=W_{1\varphi r}(0, \tau , 0, \lambda)+W_{2\varphi r}(0, \tau , 0, \lambda) $ we have the result.
\end{proof}
In \cite[Section 3]{locwill}, it was shown that when $r \to 0$ the linearization of $W_1$ reduces to  
\begin{equation}
    W_{1\varphi }(0, \tau , 0, \lambda) = -\Delta^{\mathbb{S}^2} (- \Delta^{\mathbb{S}^2} -2),
\end{equation}
which is the linearization of the Willmore operator in Euclidean space. The kernel of this operator is generated by the constant functions and the first spherical harmonics, that is $K=\mathrm{Span}  \{1,x^1, x^2, x^3 \}$ where $x^i$ are  coordinate components of a point $x\in \mathbb{S}^2$. Now notice that by our scaling (as seen in Lemma \ref{3.2}) the operator $W_{1\varphi r}(r, \tau , 0, \lambda)$ has order $\mathcal{O}(r^2) $. Therefore, we have 
\begin{equation}
     \Phi_{\varphi }(0, \tau , 0, \lambda) = -\Delta^{\mathbb{S}^2} (- \Delta^{\mathbb{S}^2} -2).
\end{equation}
Now we define precisely what a concentration of surfaces is.
\begin{definition}
 We say that a family of closed compact embedded surfaces $\{S_r: r \in I \}$, where $I$ is an interval satisfying $0 \in \Bar{I} $, is a \emph{concentration of surfaces around $p$} if 
    \begin{equation*} 
        \limsup_{r \to 0} \diam S_r =0
        \quad \text{ and} \quad \bigcap_{r_0 \in (0,\infty)} \overline{\bigcup_{r \in I \cap (0,r_0) }S_r}= \{ p \}.
    \end{equation*}
\end{definition}
Note that a foliation is a concentration of surfaces where the surfaces can be continuously parameterized by $r$ (that is $\forall r\in I$ there is a surface $S_r$) and where the surfaces do not intersect with each other.
\subsection{ Foliation construction}

As mentioned before, if a surface satisfies $\Phi_{\varphi r}(r, \tau , \varphi, \lambda)=0$ then we have an area constrained critical surface of the Hawking functional, then the idea to construct the foliation  is to find by means of the implicit function theorem some $\tau (r)$, $\varphi(r)$ and $\lambda(r)$ such that $\Phi(r,\tau(r), \varphi(r), \lambda(r))=0$ for all $r\in (0,r_0)$. To achieve this,  we use  that we can decompose $\mathcal{C}^{4,\frac{1}{2}}(\mathbb{S}^2)$ as   $K \oplus K^\bot$  where $K$ is the kernel of $-\Delta^{\mathbb{S}^2} (-\Delta^{\mathbb{S}^2} -2)$ on euclidean space  and $K^\bot$ its $L^2$ orthogonal complement. Then if one manages to show that $\Phi(r,\tau(r), \varphi(r), \lambda(r))=0$  holds on $K$ and on $K^\bot$ the equation holds on $\mathcal{C}^{4,\frac{1}{2}}(\mathbb{S}^2)$, and this is precisely  what we are going to show using the implicit function theorem in each of the cases.

\begin{theorem}\label{primfoli}
Let $p \in M $ be such that at $p$, $\nabla(  \mathrm{Sc} + \frac{3}{5} (\tr k)^2 + \frac{1}{5} |k|^2)=0  $ and $\nabla^2(  \mathrm{Sc} + \frac{3}{5} (\tr k)^2 + \frac{1}{5} |k|^2) $ is nondegenerate. Then there exist  $\delta, \epsilon_0, C >0$ such that if at $p$, 
\begin{equation}\label{folicon}
 C |(\nabla^2(  \mathrm{Sc} + \frac{3}{5} (\tr k)^2 + \frac{1}{5} |k|^2))^{-1}| \cdot \; |k|\; |\nabla k|\, (|k|^2 + |\mathrm{Ric}|  ) <1,
 \end{equation} then there exist a smooth foliation $\mathcal{F}= \{ S_r : r\in (0, \delta)  \} $  around $p$ of  area constrained critical spheres of the Hawking functional, that is surfaces satisfying equation (\ref{eulag}), for some $\lambda \in \mathbb{R}$. Furthermore, these surfaces can be express  as normal graphs over geodesic spheres of radius $r$, and they  satisfy $\mathcal{H}(S_r) < 4\pi +\epsilon_0^2$ and $|S_r|< \epsilon_0^2 $,  for $r \in (0, \delta)$.
\end{theorem}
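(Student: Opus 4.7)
The plan is to apply a Lyapunov--Schmidt reduction to the rescaled operator $\Phi$ of (\ref{resc}), following the strategy of \cite{locwill,ikoma} for the Willmore case and treating the new $k$-dependent term $W_2$ as a perturbation. Because the rescaling satisfies $g_{r,\tau}|_{r=0}=\delta$ and $k_{r,\tau}|_{r=0}=0$, the linearization at $r=0$ agrees with the Euclidean Willmore linearization $L=-\Delta^{\mathbb{S}^2}(-\Delta^{\mathbb{S}^2}-2)$, with kernel $K=\mathrm{Span}\{1,x^1,x^2,x^3\}$. Split $\mathcal{C}^{4,1/2}(\mathbb{S}^2)=K\oplus K^\perp$ with $L^2$-projections $\pi_K,\pi_{K^\perp}$; then $L|_{K^\perp}$ is an isomorphism onto its image.

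By the implicit function theorem applied to $\pi_{K^\perp}\Phi$ at $(0,\tau,0,\lambda)$ one obtains a smooth map $\varphi=\varphi(r,\tau,\lambda)\in K^\perp$, with $\varphi(0,\tau,\lambda)=0$, solving $\pi_{K^\perp}\Phi(r,\tau,\varphi,\lambda)=0$; from Lemma \ref{3.2} and the $\mathcal{O}(r^2)$ size of the $K^\perp$-projection of $\Phi(r,\tau,0,\lambda)$ read off from the expansions of $W_1$ and $W_2$, one gets $\varphi=\mathcal{O}(r^2)$. It remains to solve the four-dimensional reduced system $\pi_K\Phi(r,\tau,\varphi(r,\tau,\lambda),\lambda)=0$ in the unknowns $(\lambda,\tau)\in\mathbb{R}\times\mathbb{R}^3$.

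Using $\int_{\mathbb{S}^2}x^ix^j=\tfrac{4\pi}{3}\delta^{ij}$ and $\int_{\mathbb{S}^2}x^ix^jx^kx^l=\tfrac{4\pi}{15}(\delta^{ij}\delta^{kl}+\delta^{ik}\delta^{jl}+\delta^{il}\delta^{jk})$, and carrying out the projections of the expansions of $W_1$ and $W_2$ above, the constant projection of $\Phi(r,\tau,0,\lambda)$ is, at leading order,
\[
8\pi r^2\!\left[\lambda+\tfrac{1}{3}\!\left(\mathrm{Sc}+\tfrac{3}{5}(\tr k)^2+\tfrac{1}{5}|k|^2\right)(c(\tau))\right]+\mathcal{O}(r^3),
\]
where the coefficient ratio $1/5=(4\pi/15)/(4\pi/3)$ between the quartic and quadratic moments is exactly what combines the $|k|^2$ and $(\tr k)^2$ terms of $W_2$ into this expression. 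A first IFT solves this for $\lambda=\lambda(r,\tau)$. The projection onto $x^i$ has no $r^2$ contribution by parity; at order $r^3$, the $W_1$ part reduces, via the contracted second Bianchi identity $\mathrm{Ric}_{pi,p}=\tfrac{1}{2}\mathrm{Sc}_{,i}$ exactly as in \cite{locwill}, to $\tfrac{4\pi}{3}\partial_i\mathrm{Sc}^\tau(0)$; the analogous combinatorial calculation for $W_2$ using (\ref{sphep}) yields $\tfrac{4\pi}{3}\partial_i(\tfrac{3}{5}(\tr k^\tau)^2+\tfrac{1}{5}|k^\tau|^2)(0)$, so the reduced equation reads
\[
\tfrac{4\pi}{3}r^3\,\partial_i\!\left(\mathrm{Sc}+\tfrac{3}{5}(\tr k)^2+\tfrac{1}{5}|k|^2\right)(c(\tau))+r^3R_i(r,\tau)=0.
\]

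By hypothesis the leading term vanishes at $\tau=0$ and its Jacobian there is $\tfrac{4\pi}{3}$ times the nondegenerate Hessian, so a final IFT produces a smooth $\tau(r)$ with $\tau(0)=0$; unwinding the rescaling, $S_r:=F_{\tau(r)}(\alpha_r(\mathbb{S}^2_{\varphi(r,\tau(r),\lambda(r,\tau(r)))}))$ is a smooth family of area-constrained critical spheres of the Hawking functional. The estimate $\varphi=\mathcal{O}(r^2)$ together with the smooth dependence of $(\tau,\lambda)$ on $r$ ensures that the $S_r$ are mutually disjoint and depend smoothly on $r$, so they form a foliation, while the bounds on $\mathcal{H}(S_r)$ and $|S_r|$ follow by direct expansion. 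The main obstacle, and the source of hypothesis (\ref{folicon}), is to ensure that the remainder $R_i$ and the $k$-induced perturbation of the Jacobian are dominated by the Hessian when applying the last IFT uniformly: the factor $|k|\,|\nabla k|(|k|^2+|\mathrm{Ric}|)$ in (\ref{folicon}) is precisely the natural bound on the quadratic-in-$k$ subleading contributions coming from $W_2$ and on the mixed $k$--$\mathrm{Ric}$ terms, so choosing $C$ from the quantitative IFT constants yields the parameters $\delta,\epsilon_0$ of the statement.
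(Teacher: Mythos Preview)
Your Lyapunov--Schmidt reduction and the computation of the reduced equations are essentially what the paper does, and the leading-order projections you obtain are correct. However, there is a genuine gap in the foliation step, and you misidentify the role of condition (\ref{folicon}).

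The claim that ``$\varphi=\mathcal{O}(r^2)$ together with smooth dependence of $(\tau,\lambda)$ on $r$ ensures that the $S_r$ are mutually disjoint'' is not justified: the center $c(\tau(r))$ is moving with $r$, and smooth dependence alone does not prevent leaves from intersecting. What the paper actually does is compute the lapse of the family via the parametrization $G(r,x)=\exp_{c(\tau(r))}(rx(1+r^2\varphi(r)))$, obtaining $\alpha=\langle\partial_r G,\nu\rangle|_{r=0}=1+\partial_r\tau^k(0)\langle e_k,\nu\rangle$, so that a foliation requires $|\partial_r\tau(0)|<1$. This is where (\ref{folicon}) enters---\emph{not} in the IFT. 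The implicit function theorem is applied at $(r,\tau,\varphi,\lambda)=(0,0,\varphi_0,\lambda_0)$ with Jacobian on $K_1$ equal to $\tfrac{4\pi}{3}\nabla^2(\mathrm{Sc}+\tfrac{3}{5}(\tr k)^2+\tfrac{1}{5}|k|^2)$, which is invertible by hypothesis; no extra smallness is needed there, contrary to your last paragraph.

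To estimate $\partial_r\tau(0)$ the paper differentiates the reduced $K_1$-equation in $r$ at $r=0$, which from the expansion (\ref{expan}) yields
\[
\tfrac{4\pi}{3}\,\nabla^2\!\big(\mathrm{Sc}+\tfrac35(\tr k)^2+\tfrac15|k|^2\big)\cdot\partial_r\tau(0)=-\tfrac12\,\tilde\pi_1\!\big(\Phi_{\varphi rr}(0,0,0,0)\,\varphi_0\big),
\]
where $\varphi_0\in K^\perp$ is the specific solution of (\ref{phi_0}). In the Willmore case $W_{1,\varphi rr}(0,0,0,0)$ is an even operator and $\varphi_0$ is even, so the right-hand side vanishes and the foliation is automatic; but $W_{2,\varphi rr}(0,0,0,0)$ has an odd part of schematic form $\nabla k\ast k$, and since $\varphi_0$ is a linear combination of terms of type $k\ast k$ and $\mathrm{Ric}$, one gets the bound $|\partial_r\tau(0)|\leq C\,|(\nabla^2(\cdots))^{-1}|\cdot|k|\,|\nabla k|\,(|k|^2+|\mathrm{Ric}|)$. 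This is the precise origin of the factor in (\ref{folicon}); your description of it as controlling ``the remainder $R_i$ and the $k$-induced perturbation of the Jacobian'' for the IFT is off target.
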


\begin{proof}
We split the kernel K in two parts $K_0=\mathrm{Span }\{ 1\} $ and  $K_1=\mathrm{Span}\{ x^1, x^2, x^3\} $.
Let $\pi_i$ for $i=0,1$ denote the orthogonal projection from $ \mathcal{C}^{0, \frac{1}{2}}( \mathbb{S}^n)$ onto $K_i$, let $T_1:K_1 \rightarrow  \mathbb{R}^3 $ be the isomorphism sending $x^i_{| \mathbb{S}^2} $  to the $i$th coordinate basis $e_i$, and let $T_0:K_0 \rightarrow  \mathbb{R} $ be the identity map. Define $ \Tilde{\pi}_i:= T_i \circ \pi_i$ for $i=1,2$. We expand the operator
\begin{equation}
\begin{split}
    \Phi(r, \tau, r^2 \varphi, \lambda )=&  \int_0^1 \frac{\partial}{\partial t} (\Phi(r, \tau,t r^2 \varphi, \lambda )) dt + \Phi(r, \tau, 0, \lambda )  \\
    =& \int_0^1 \int_0^1 \frac{\partial}{\partial s} (\Phi_{\varphi}(sr, \tau,st r^2 \varphi, \lambda )) ds r^2 \varphi dt +  \Phi(r, \tau, 0, \lambda ) \\
    &+ \Phi_\varphi(0, \tau, 0, \lambda )\varphi^2r^2
    \end{split}
\end{equation}
and continuing the same procedure, we obtain
\begin{equation}\label{expan}
    \begin{split}
        \Phi(r, \tau, r^2 \varphi, \lambda )=& \Phi(r, \tau, 0, \lambda ) +\Phi_\varphi (0, \tau, 0, \lambda) \varphi r^2 +\Phi_{\varphi r}(0, \tau , 0, \lambda)\varphi r^3 \\
        &+ r^4 \int_0^1 \int_0^1 t \Phi_{\varphi \varphi}(sr, \tau,st r^2 \varphi, \lambda )  \varphi \varphi ds dt\\
        &+ r^4 \int_0^1 \int_0^1 \int_0^1 s \Phi_{\varphi r r}(usr, \tau,ust r^2 \varphi, \lambda )   \varphi du ds dt\\
        &+ r^5 \int_0^1 \int_0^1 \int_0^1 s t \Phi_{\varphi \varphi r}(usr, \tau,ust r^2 \varphi, \lambda )  \varphi \varphi du ds dt.\\
    \end{split}
\end{equation}
Note that $\Phi_{\varphi r}(0, \tau , 0, \lambda)\varphi =0$ by Lemma \ref{3.2}. We will study the projection of this expansion to the kernel. We have for the first term that  $\Phi(r, \tau, 0, \lambda )= W_1(r, \tau, 0, \lambda )+ W_2(r, \tau, 0, \lambda )$ and   in \cite[Lemma 3.1]{locwill} it was shown that
\begin{equation}\label{willker}
    \begin{split}
        \Tilde{\pi}_0 \left(  W_1(r, \tau, 0, \lambda )\right)=& 8 \pi r^2 \left( \lambda + \frac{1}{3} \mathrm{Sc}^\tau(0)  \right) + \mathcal{O}(r^4)\\
         \Tilde{\pi}_1\left(  W_1(r, \tau, 0, \lambda )\right)=& \frac{4 \pi}{3}r^3 \nabla_{e_i}  \mathrm{Sc}^\tau(0) e_i + \mathcal{O}(r^5)
    \end{split}
\end{equation}
Now using equation (\ref{sphep}) and the fact that $\int_{\mathbb{S}^2} x^i d\mu = \int_{\mathbb{S}^2} x^i x^jx^p d\mu =\int_{\mathbb{S}^2} x^ix^j x^p x^q x^s d \mu =0 $ we have
\begin{equation}
    \begin{split}
       \Tilde{\pi}_0\left(  \frac{W_2(r, \tau, 0, \lambda )}{r^2}  \right)_{|r=0}
        =& \int_{\mathbb{S}^2}\big(   \left(
     2\tr k^\tau(rx)\, k^\tau_{ij}(rx)+ 4 k^\tau_{si}(rx) \, k^\tau_{sj}(rx)\right)x^i x^j \\
    &-(\tr k^\tau)(rx)^2 -5  k^\tau_{ij}(rx)\, k^\tau_{pq}(rx) x^ix^j x^px^q   \big)d\mu_{|r=0}\\
    =& \left( 2 \tr k^\tau(0)\, k^\tau_{ij}(0)+ 4 k^\tau_{si}(0) \, k^\tau_{sj}(0)\right) \int_{\mathbb{S}^2}x^i x^j d\mu \\
    &-(\tr k^\tau)(0)^2\int_{\mathbb{S}^2}d\mu -5  k^\tau_{ij}(0)\, k^\tau_{pq}(0) \int_{\mathbb{S}^2}x^ix^j x^px^q  d\mu\\
    =& 8 \pi  ( -\frac{2}{3}(\tr k^\tau)^2 + \frac{2}{3} |k^\tau|^2)
    \end{split}
\end{equation}
where Lemma \ref{integrals} was used and  the quantities are evaluated at the point $c(\tau)$.

Note that for any $\varphi_0 \in K^\bot$ one has $\Tilde{\pi}_i(\Phi_\varphi (0, \tau, 0, \lambda) \varphi ) =0 $, then taking some arbitrary $\varphi_0 \in K^\bot$ which will be fixed later,   and $\lambda_0 = - \frac{1}{3}\mathrm{Sc} +\frac{2}{3}(\tr k^\tau)^2 - \frac{2}{3} |k^\tau|^2 $  where the geometric quantities are evaluated at $p$, we find  using the expansion (\ref{expan}) that
\begin{equation}\label{k0}
    \begin{split}
              \Tilde{\pi}_0\left( \frac{\Phi(r,\tau,  r^2\varphi,\lambda )}{r^2} \right)_{|\substack{r=0,\tau=0, \lambda=\lambda_0, \varphi=\varphi_0}}  = 8 \pi (  \lambda_0 + \frac{1}{3} \mathrm{Sc}^\tau -\frac{2}{3}(\tr k^\tau)^2 + \frac{2}{3} |k^\tau|^2)_{|\tau=0}   =0
    \end{split}
\end{equation}
Using again the expansion (\ref{expan}) and (\ref{willker}) we have 
\begin{equation}\label{proj1}
    \begin{split}
              \Tilde{\pi}_1\left( \frac{\Phi(r,\tau,  r^2\varphi_0,\lambda )}{r^3} \right)_{|\substack{r=0,\tau=0, \lambda=\lambda_0}} &= \frac{4 \pi}{ 3}  \mathrm{Sc}_{,i}e_i +  \Tilde{\pi}_1 \left(  \frac{ W_2(r, \tau, 0, \lambda )}{r^3}  \right)_{|\substack{r=0,\tau=0, \lambda=\lambda_0}}\\
              &= \frac{4 \pi}{ 3}  \mathrm{Sc}_{,i}e_i +  \Tilde{\pi}_1 \Big(    \big(\frac{\partial_i (\tr k^\tau)^2}{2}  - 2\partial_s ( \tr k^\tau \, k^\tau_{ s i}) \big) x^i   +(\partial_s (\tr k^\tau k^\tau_{ij}) \\
    &+ 2 \partial_t (k^\tau_{ij} k^\tau_{ts}) )x^i x^j x^s -3 k^\tau_{ij} \, k^\tau_{pq,s} x^i x^j x^px^q x^s   \Big)_{|\substack{r=0,\tau=0, \lambda=\lambda_0}} \\
    &+\frac{1}{r} \Tilde{\pi}_1 \big(   (6 \tr k^\tau\, k^\tau_{ij}+ 4 k^\tau_{si} \, k^\tau_{sj})x^i x^j-(\tr k^\tau)^2\\
    &-9  k^\tau_{ij}\, k^\tau_{pq} x^ix^j x^px^q    \big)_{|\substack{r=0,\tau=0, \lambda=\lambda_0}}\\
    \end{split}
\end{equation}
Let's see in detail the last two terms of this expression, we have that the second term is equal to 
\begin{equation}
    \begin{split}
                 \int_{\mathbb{S}^2} &\big(\frac{\partial_i (\tr k^\tau)^2}{2}  - 2\partial_s ( \tr k^\tau \, k^\tau_{ s i}) \big) x^i x^l  +(\partial_s (\tr k^\tau k^\tau_{ij}) 
    + 2 \partial_t (k^\tau_{ij} k^\tau_{ts}) )x^i x^j x^sx^l \\&
    -3 k^\tau_{ij} \, k^\tau_{pq,s} x^i x^j x^px^q x^sx^l  d\mu _{|\substack{r=0,\tau=0}} e_l\\
    =&  \big(\frac{\partial_i (\tr k^\tau)^2}{2}  - 2\partial_s ( \tr k^\tau \, k^\tau_{ s i}) \big) \int_{\mathbb{S}^2}x^i x^l d\mu +(\partial_s (\tr k^\tau k^\tau_{ij}) 
    + 2 \partial_t (k^\tau_{ij} k^\tau_{ts}) ) \int_{\mathbb{S}^2}x^i x^j x^sx^l d\mu\\
    &-3 k^\tau_{ij} \, k^\tau_{pq,s} \int_{\mathbb{S}^2} x^i x^j x^px^q x^sx^l  d\mu  e_l\\
    =& \frac{92\pi}{105} \partial_l (\tr k)^2 e_l- \frac{64\pi}{35} \partial_s(\tr k\, k_{sl})e_l + \frac{64\pi}{105} \partial_t( k_{ls} \,k_{st})e_l - \frac{12 \pi}{105} \partial_l |k|^2e_l
    \end{split}
\end{equation}

For the last term of the expression note that $\Tilde{\pi}_1 \big(  ( 6\tr k^\tau \, k^\tau_{ij}
    + 4 k^\tau_{si} \, k^\tau_{sj})x^i x^j
    -(\tr k^\tau)^2 \\-9  k^\tau_{ij}\, k^\tau_{pq} x^ix^j x^px^q   \big)_{|\substack{r=0}}=0
    $ and that $\frac{\partial}{\partial r}_{|r=0} k_{ij}(rx)= k_{ij,t}(0)x^t$, then by performing a Taylor expansion around $r=0$ we find
   \begin{equation*}
    \begin{split}
    &\Big(\frac{1}{r} \Tilde{\pi}_1 \big(   (6 \tr k^\tau\, k^\tau_{ij}+ 4 k^\tau_{si} \, k^\tau_{sj})x^i x^j
    -(\tr k^\tau)^2 -9  k^\tau_{ij}\, k^\tau_{pq} x^ix^j x^px^q    \big) \Big)_{|\substack{r=0,\tau=0, \lambda=\lambda_0}}\\
    &=\frac{\partial}{\partial r} \int_{\mathbb{S}^2} \big((6\tr k^\tau\, k^\tau_{ij}+ 4 k^\tau_{si} \, k^\tau_{sj})x^i x^j
    -(\tr k^\tau)^2 -9  k^\tau_{ij}\, k^\tau_{pq} x^ix^j x^px^q\big)x^l  d\mu_{|\substack{r=0,\tau=0, \lambda=\lambda_0}}  e_l\\
    &=\int_{\mathbb{S}^2} \big((6 \partial_p(\tr k, k_{ij})+ 4 \partial_p (k_{si} \, k_{sj}))x^i x^jx^p
    -\partial_i (\tr k)^2x^i -9  \partial_s (k_{ij}\, k_{pq}) x^ix^j x^px^qx^s\big)x^l  d\mu \, e_l\\ 
    &=-\frac{8\pi}{105} \partial_l (\tr k)^2 e_l+ \frac{64\pi}{35} \partial_s(\tr k\, k_{sl})e_l - \frac{64\pi}{105} \partial_t( k_{ls} \,k_{st})e_l + \frac{8 \pi}{21} \partial_l |k|^2e_l
    \end{split}
\end{equation*}
Then putting everything back into (\ref{proj1}), we obtain
\begin{equation}\label{proj2}
    \begin{split}
              \Tilde{\pi}_1\left( \frac{\Phi(r,\tau,  r^2\varphi_0,\lambda )}{r^3} \right)_{|\substack{r=0,\tau=0, \lambda=\lambda_0}} &= \frac{4 \pi}{ 3} \partial_l( \mathrm{Sc}+ \frac{3}{5}  (\tr k)^2 + \frac{1}{5} |k|^2  )e_l =0.  
    \end{split}
\end{equation}

 To apply the implicit function theorem for the system of equations (\ref{k0}) and (\ref{proj2}),  we need the corresponding operator to be invertible. Let us find the operator. We  compute the following derivatives 
\begin{equation*}
    \begin{split}
              \frac{\partial}{\partial \lambda }\Tilde{\pi}_0\left( \frac{\Phi(r,\tau,  r^2\varphi,\lambda )}{r^2} \right)_{|\substack{r=0,\tau=0, \lambda=\lambda_0}} =8 \pi, \quad \frac{\partial}{\partial \lambda } \Tilde{\pi}_1\left( \frac{\Phi(r,\tau,  r^2\varphi,\lambda )}{r^3} \right)_{|\substack{r=0,\tau=0, \lambda=\lambda_0}} = 0,
    \end{split}
\end{equation*}
\begin{equation*}
    \begin{split}
              \frac{\partial}{\partial \tau^\beta }\Tilde{\pi}_0\left( \frac{\Phi(r,\tau,  r^2\varphi,\lambda )}{r^2} \right)_{|\substack{r=0,\tau=0, \lambda=\lambda_0}} =\frac{8 \pi}{3} (    \partial_{\tau^\beta} \mathrm{Sc} + \frac{1}{5} \partial_{\tau^\beta}|k|^2 + \frac{3}{5} \partial_{\tau^\beta}(\tr k)^2)   )=0,
    \end{split}
\end{equation*}
\begin{equation*}
    \begin{split}
              \frac{\partial}{\partial \tau^\beta } \Tilde{\pi}_1\left( \frac{\Phi(r,\tau,  r^2\varphi,\lambda )}{r^3} \right)_{|\substack{r=0,\tau=0, \lambda=\lambda_0}} &= \frac{4 \pi}{ 3} \partial_{\tau^\beta} \partial_l( \mathrm{Sc}+ \frac{3}{5}  (\tr k)^2 + \frac{1}{5} |k|^2  )e_l.
    \end{split}
\end{equation*}
Then  we need the operator 
\begin{equation}
    \begin{pmatrix}
 8\pi & 0  \\
 0 & \frac{4 \pi}{ 3} \nabla^2( \mathrm{Sc}+ \frac{3}{5}  (\tr k)^2 + \frac{1}{5} |k|^2  )
\end{pmatrix}
\end{equation}
 to be invertible  at point $p$ and this is equivalent to have $\nabla^2( \mathrm{Sc}+ \frac{3}{5}  (\tr k)^2 + \frac{1}{5} |k|^2  )$ invertible. Then there exist functions $\tau =\tau(r,\varphi)$ and $\lambda =\lambda(r, \varphi)$ such that $\tau(0,\varphi_0)=0$, $\lambda(0,\varphi_0)=\lambda_0=- \frac{1}{3}\mathrm{Sc} -\frac{2}{3} |k|^2 + \frac{2}{3} (\tr k)^2 $  and  $\Tilde{\pi}_i( \Phi(r,\tau,  r^2\varphi,\lambda ))=0$ $i=1,2 $ for $(r,\tau,\varphi,\lambda)$ close to $(0,0,\varphi_0, \lambda_0)$.

Now let us  apply the implicit function theorem to have a vanishing projection to the orthogonal to the kernel. First, we fix the map $\varphi_0 \in K^\bot$ to be the solution to the equation 
\begin{equation}\label{phi_0}
    -\Delta^{\mathbb{S}^2} (- \Delta^{\mathbb{S}^2} -2) \varphi_0= \pi^\bot\left( 9  k^\tau_{ij}\, k^\tau_{pq} x^ix^j x^px^q - (4 \mathrm{Ric}_{ij}+  6 \tr k^\tau\, k^\tau_{ij}+ 4 k^\tau_{si} \, k^\tau_{sj})x^i x^j
      \right) 
\end{equation}
where $ \pi^\bot$ is the orthogonal  projection to $K^\bot$. Then we obtain projecting (\ref{expan}) to $K^\bot$ and normalizing it by $r^2$.
\begin{equation}\label{kort}
\begin{split}
  \pi^\bot \left(  \frac{\Phi(r,\tau,  r^2\varphi,\lambda )}{r^2} \right)_{|r=0,\varphi=\varphi_0}=&\pi^\bot \big( -2(\frac{1}{3}\mathrm{Sc} + \frac{1}{15} |k|^2 + \frac{1}{5} (\tr k)^2  ) -\frac{2}{3}\mathrm{Sc}  +4 \mathrm{Ric}_{ij}x^ix^j    \\ &  -(\tr k^\tau)^2
   + (6\tr k^\tau \, k^\tau_{ij}
    + 4 k^\tau_{si} \, k^\tau_{sj})x^i x^j  -9  k^\tau_{ij}\, k^\tau_{pq} x^ix^j x^px^q\\
    &- \Delta^{\mathbb{S}^2}(\Delta^{\mathbb{S}^2} -2) \varphi_0   \big)\\
  =&  \pi^\bot\left(  (4 \mathrm{Ric}_{ij}+  6 \tr k^\tau\, k^\tau_{ij}+ 4 k^\tau_{si} \, k^\tau_{sj})x^i x^j
       -9  k^\tau_{ij}\, k^\tau_{pq} x^ix^j x^px^q \right)\\
      &-\Delta^{\mathbb{S}^2} (- \Delta^{\mathbb{S}^2} -2) \varphi_0\\
      =&0
\end{split}
\end{equation}
\begin{equation}
    \frac{\partial}{\partial \varphi}\pi^\bot \left(  \frac{\Phi(r,\tau,  r^2\varphi,\lambda )}{r^2} \right)_{|r=0,\varphi=\varphi_0}= -\Delta^{\mathbb{S}^2} (- \Delta^{\mathbb{S}^2} -2)\vert_{K^\bot}  
\end{equation}
and this operator is invertible since our equation is restricted to $K^\bot$ (the $K$ part is zero). Then by the implicit function theorem, there exist some $\delta>0$, $\tau=\tau(r)$, $\varphi (x)=\varphi(x,r)$ and $\lambda =\lambda(r) $ such that $\Phi(r, \tau(r), r^2 \varphi(r), \lambda(r) )=0  $ for $0<r<\delta $, this means that for each $r$ we have an area constrained critical surface of the Hawking functional. Now let's see that these surfaces form a foliation.

 By construction, we have the following parametrization for our surfaces.
 \begin{equation}\label{parame}
     G:\mathbb{R}^+ \times \mathbb{S}^n \mapsto M, \quad (r,x) \mapsto \exp_{c(\tau(r))}\big(rx(1+r^2 \varphi(r))\big)  
 \end{equation}
 
where we write $\varphi(r)=\varphi(r)(x) $ for simplicity. To find the lapse function of  these surfaces one calculates
 
$ \frac{\partial G }{\partial r}_{|r=0} = \left(d_x \exp_{c(\tau(r))} \right) \big(x(1 +r^2 \varphi(r)) + rx(r^2 \varphi(r))_r\big)_{|r=0} + \left( \frac{\partial  \exp_{c(\tau(r))}}{\partial r}  \right) \big( rx(1 +r^2 \varphi(r))\big)_{|r=0} $

 and this reduces to $ \frac{\partial G}{\partial r}_{|r=0}= x + \frac{\partial \tau^k}{\partial r}_{|r=0} e_k  $,
then we see that the lapse function is given by 
    \begin{equation}\label{lapse}
        \alpha:=\langle \frac{\partial G}{\partial r}_{|r=0},\nu  \rangle = 1 +  \frac{\partial \tau^k}{\partial r}\langle   e_k, \nu \rangle 
    \end{equation}
     therefore we have a foliation if  $\alpha>0$, then it suffices to show that $|\frac{\partial\tau }{\partial r}_{|r=0}|<1 $.  To estimate $\frac{\partial\tau }{\partial r}_{|r=0} $ we will use that the equation $ \Tilde{\pi}_1\left(\frac{\Phi(r, \tau, r^2 \varphi, \lambda )}{r^3}\right)=0 $ implies that  $ \frac{\partial}{\partial r} \Tilde{\pi}_1\left(\frac{\Phi(r, \tau, r^2 \varphi, \lambda )}{r^3}\right)_{|r=0}=0$ and by (\ref{expan})  this is  
\begin{equation}\label{estau}
\begin{split}
    0=& \frac{\partial}{\partial r} \Tilde{\pi}_1\left(\frac{\Phi(r, \tau, 0, \lambda )}{r^3}\right)_{|r=0}  + \frac{1}{2} \Tilde{\pi}_1\left( \Phi_{\varphi \varphi}(0, 0,0,0 ) \varphi_0 \varphi_0  \right) +\frac{1}{2} \Tilde{\pi}_1\left( \Phi_{\varphi r r}(0, 0,0,0 ) \varphi_0   \right).\\
    \end{split}
\end{equation}
Note that the second term is equal to zero. For the first, term it is not hard to see using (\ref{proj2}) and the chain rule that 
\begin{equation}
    \frac{\partial}{\partial r} \Tilde{\pi}_1\left(\frac{\Phi(r, \tau, 0, \lambda )}{r^3}\right)_{|r=0}= \frac{4 \pi}{ 3} \partial_{\tau^\beta} \partial_l( \mathrm{Sc}+ \frac{3}{5}  (\tr k)^2 + \frac{1}{5} |k|^2  )\frac{\partial\tau^\beta }{\partial r}_{|r=0}e_l
\end{equation}

then from (\ref{estau}) and the invertibility of $\nabla^2(  \mathrm{Sc} + \frac{3}{5} (\tr k)^2 + \frac{1}{5} |k|^2) $ we have
\begin{equation}\label{foli2}
     |\frac{\partial\tau }{\partial r}_{|r=0}| < \frac{3}{4 \pi} |(\nabla^2(  \mathrm{Sc} + \frac{3}{5} (\tr k)^2 + \frac{1}{5} |k|^2))^{-1}| \cdot | \frac{1}{2} \Tilde{\pi}_1\left( \Phi_{\varphi r r}(0, 0,0,0 ) \varphi_0   \right)|
\end{equation}
In the following, we show that the right hand side of the previous
expression is less than one.  The solution of the equation (\ref{phi_0}) is a function of the form  $\varphi_0 =(k\ast k)_{ijpq} \;x^i x^j x^p x^q  + C\cdot ( \mathrm{Ric} +  k\ast k )_{ij} \; x^i x^j   + C \cdot (\mathrm{Sc} +  k\ast k )$, where we denote  for any tensors $A$ and $B,$ $A \ast B$ to be any linear combination of contractions of $A$ and $B$ with the correspondent metric. In particular, we have that $\varphi_0$ is an even function. In \cite[Lemma 4.1]{locwill}, it was shown that $W_{1\varphi r r}(0, 0,0,0 ) $ is an even operator which implies that $\Tilde{\pi}_1\left( W_{1\varphi r r}(0, 0,0,0 ) \varphi_0   \right) =0$.   Unfortunately  the operator $W_{2\varphi r r}(0, 0,0,0 ) $ is not even, it has an odd part which is proportional to  $\nabla k \ast k  $, then combining this with the expression of $\varphi_0$ in (\ref{foli2}) we obtain the estimate 
$$   |\frac{\partial\tau }{\partial r}_{|r=0}| < C |(\nabla^2(  \mathrm{Sc} + \frac{3}{5} (\tr k)^2 + \frac{1}{5} |k|^2))^{-1}| \cdot \; |k|\; |\nabla k|\, (|k|^2 + |\mathrm{Ric}|  ) $$
where $C$ depends on $n$. Then if $ |(\nabla^2(  \mathrm{Sc} + \frac{3}{5} (\tr k)^2 + \frac{1}{5} |k|^2))^{-1}| \cdot \; |k|\; |\nabla k|\, (|k|^2 + |\mathrm{Ric}|  ) $  is small enough we have $|\frac{\partial \tau}{ \partial r}_{| r=0}| <1$ and in particular a foliation.

The leaves of the foliation are normal graphs of the map $r^3 \varphi(r)$  over geodesics spheres of radius $r$. This implies that the mean curvature of our surfaces  can be estimated by the mean curvature   of the geodesic sphere and $\Hess \varphi_0$. Then using that $|| \varphi||_{\mathcal{C}^2} < C$ with $C$ depending on the value of $\mathrm{Ric} $ and $k$ in these coordinates at $p$ we have
$$|H_{S_r}|<|H_{F_\tau(\mathbb{S}_r^n)} | + \mathcal{O}(r^2) < \frac{2}{r} + \mathcal{O}(r)  $$
Then proceeding in the same way as it was done in \cite[Lemma 5.1]{ikoma}, we find that the Willmore energy of the surfaces satisfy 
$$ \frac{1}{4} \int_{S_r} H^2  d\mu  = 4 \pi +  \mathcal{O}(r^2) $$
and $|S_r| = 4 \pi r^2+ \mathcal{O}(r^4)$, then it is direct to see that there exists an $\epsilon_0$ such that $$\mathcal{H}(\Sigma)= \frac{1}{4} \int_{S_r} H^2 - P^2 d\mu < 4 \pi + \epsilon_0^2$$ and $|S_r| < \epsilon_0^2 $ for any $r \in (0, \delta)$. Note that the smaller $\delta$ is, the smaller $\epsilon_0$ can be.
\end{proof}
\begin{remark}
$(i)$ Note that condition (\ref{folicon}) is a sufficient but not a necessary condition to have the foliation. The necessary condition is that $\alpha= 1 +  \frac{\partial \tau^k}{\partial r}_{|r=0} \langle   e_k, \nu \rangle >0$, if this condition is not fulfilled, then we only have a regularly centered concentration of critical surface of the Hawking functional around $p$. 

$(ii)$ Note that any initial data set with a local minimum or maximum for the function $ \mathrm{Sc} + \frac{3}{5} (\tr k)^2 + \frac{1}{5} |k|^2$ has a concentration of such surfaces. In particular, any  compact initial data set has at least two. 
\end{remark}

\subsection{Uniqueness and nonexistence}
Now we prove that a point possessing a foliation of area constrained critical surfaces of the Hawking energy cannot have any other of such foliations. That is, the previously constructed foliation is unique.
\begin{theorem}\label{uniq}
$(i)$ Assume that at $p$ $\nabla(  \mathrm{Sc} + \frac{3}{5} (\tr k)^2 + \frac{1}{5} |k|^2)=0  $, $\nabla^2(  \mathrm{Sc} + \frac{3}{5} (\tr k)^2 + \frac{1}{5} |k|^2) $ is nondegenerate and that the foliation $ \mathcal{F}$ of Theorem \ref{primfoli}  exists satisfying  $\mathcal{H}(\Sigma) < 4\pi +\epsilon_0^2$ and $|\Sigma|< \epsilon_0^2 $ for any $\Sigma \in \mathcal{F} $ and the $\epsilon_0$ of the theorem. If  $ \mathcal{F}_2$ is a  foliation around $p$ of  area constrained critical spheres of the Hawking functional, which satisfy $\mathcal{H}(\Sigma) < 4\pi +\epsilon^2$ and $|\Sigma|< \epsilon^2 $ for any $\Sigma \in \mathcal{F}_2 $ and some $\epsilon \leq \epsilon_0$, then either $\mathcal{F}$ is a restriction of $\mathcal{F}_2$ or $\mathcal{F}_2$ is a restriction of $\mathcal{F}$.

$(ii)$  Claim $(i)$  also holds if, instead of  foliations, we consider a concentration of  surfaces around $p$ that satisfy $\mathcal{H}(\Sigma) < 4\pi +\epsilon^2$ and $|\Sigma|< \epsilon^2 $ for any $\Sigma \in \mathcal{F}_2 $ and  $\epsilon \leq \epsilon_0$.
\end{theorem}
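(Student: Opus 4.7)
The plan is to reduce the problem to the local uniqueness part of the implicit function theorem used in Theorem \ref{primfoli}. The key point is that the smallness hypotheses $\mathcal{H}(\Sigma)<4\pi+\epsilon^{2}$ and $|\Sigma|<\epsilon^{2}$, together with the Euler--Lagrange equation \eqref{eulag}, force any surface $\Sigma\in\mathcal{F}_{2}$ (or in the concentration) to fall inside the neighbourhood in which the implicit function theorem was applied, so it must coincide with one of the leaves $S_{r}$ of $\mathcal{F}$.

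First I would convert the Hawking energy bound into a Willmore energy bound. Since $|P|=|\tr_{\Sigma}k|\le 2|k|_{g}$ pointwise and $|k|$ is bounded on a fixed neighbourhood of $p$, we get $\tfrac{1}{4}\int_{\Sigma}P^{2}d\mu\le C|k|_{\infty}^{2}|\Sigma|<C'\epsilon^{2}$, hence $\mathcal{W}(\Sigma)=\mathcal{H}(\Sigma)+\tfrac{1}{4}\int_{\Sigma}P^{2}d\mu<4\pi+C''\epsilon^{2}$. Together with $|\Sigma|<\epsilon^{2}$, this is the same quantitative smallness used in the Willmore case in \cite{locwill,ikoma}. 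Next I would invoke the Li--Yau inequality and standard De Lellis--M\"uller type estimates to conclude that, provided $\epsilon_{0}$ is small enough, $\Sigma$ is a topological sphere contained in a small geodesic ball around $p$ that can be written as a normal graph $x\mapsto x+r^{2}\varphi(x)\nu(x)$ over a geodesic sphere of area radius $r$ centered at some $c(\tau)$ with $|\tau|$ small, where $\varphi\in\Omega_{1}$ is small in $W^{2,2}$. The elliptic fourth order equation \eqref{eulag}, rewritten for the rescaled operator $\Phi$, is uniformly elliptic in the graph function after the standard rescaling, so Schauder estimates upgrade this closeness to a $\mathcal{C}^{4,1/2}$ bound on $\varphi$. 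In particular $(r,\tau,\varphi,\lambda)$ lies in the neighbourhood $U$ of $(0,0,\varphi_{0},\lambda_{0})$ on which the implicit function theorem was applied in Theorem \ref{primfoli}.

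Now the equation $\Phi(r,\tau,r^{2}\varphi,\lambda)=0$ decomposes into the two projected equations $\tilde{\pi}_{i}\Phi=0$ for $i=0,1$ and $\pi^{\perp}\Phi=0$. The implicit function theorem, applied with the invertible linearizations computed in the proof of Theorem \ref{primfoli}, gives a \emph{unique} solution $(\tau(r),\varphi(r),\lambda(r))$ in $U$ for every $r\in(0,\delta)$. Consequently any surface $\Sigma$ belonging to the class under consideration must satisfy $\Sigma=S_{r(\Sigma)}$ where $r(\Sigma)$ is determined by $|\Sigma|=4\pi r^{2}+\mathcal{O}(r^{4})$. This yields both parts of the statement: in $(i)$, parameterizing $\mathcal{F}_{2}$ by the area radius shows that $\mathcal{F}_{2}$ and $\mathcal{F}$ agree on the overlap of their radius ranges, so one is a restriction of the other; in $(ii)$, the same pointwise argument applies to every surface in a concentration, independently of whether the surfaces are arranged as a foliation, since uniqueness is proved surface by surface.

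The main obstacle is the second step: making precise the passage from the smallness of $\mathcal{H}(\Sigma)$ and $|\Sigma|$ to a $\mathcal{C}^{4,1/2}$ graphical representation over a geodesic sphere with small graph function, suitable for the implicit function theorem. In the purely Willmore case this is carried out in \cite{locwill,ikoma}; here the additional terms in $W_{2}$ involving $k$ and $\nabla k$ are of lower order in $r$ after rescaling and are controlled by the uniform bounds on $k$ near $p$, so they do not affect the ellipticity of the linearization $-\Delta^{\mathbb{S}^{2}}(-\Delta^{\mathbb{S}^{2}}-2)$ nor the Schauder estimates, and the Willmore argument adapts. Once this graphical representation is in place, the remainder of the argument is a direct application of the local uniqueness in the implicit function theorem.
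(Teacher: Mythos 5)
Your overall strategy -- show that each competing surface is a normal graph over a geodesic sphere with small graph function and then invoke the local uniqueness of the implicit function theorem from Theorem \ref{primfoli} -- is exactly the strategy of the paper, and your closing observation that the argument is surface-by-surface (so part $(ii)$ follows for free) is also how the paper concludes. However, as written the final step ``direct application of the local uniqueness in the implicit function theorem'' does not go through, for two concrete reasons.

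First, the implicit function theorem in Theorem \ref{primfoli} produces a unique solution only within a specific parametrization: the graph function, after dividing by $r^{2}$, must lie in $K^{\perp}$, the $L^{2}$-orthogonal complement of $K=\mathrm{Span}\{1,x^{1},x^{2},x^{3}\}$, and must be close to the particular function $\varphi_{0}$ solving \eqref{phi_0}. A given surface $\Sigma\in\mathcal{F}_{2}$ admits many graphical representations over geodesic spheres (varying the center and the radius), and a generic one has a graph function with nonzero projection onto $K$. You must therefore gauge-fix: adjust the center $\tau$ so that $\pi_{1}(\varphi)=0$ (the paper does this via the De Lellis--M\"uller theorem together with \cite[Corollary 2.1 and Lemma 2.4]{Ye}) and then adjust the radius $r\mapsto r^{*}=r(1+m(\varphi))$ so that $\pi_{0}(\varphi^{*})=0$. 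Without this normalization the quadruple $(r,\tau,\varphi,\lambda)$ you produce need not lie in the set where the implicit function theorem asserts uniqueness, so no contradiction with $\mathcal{F}$ is obtained.

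Second, being in the uniqueness neighbourhood also requires $\lambda$ close to $\lambda_{0}=-\tfrac{1}{3}\mathrm{Sc}-\tfrac{1}{15}|k|^{2}-\tfrac{1}{5}(\tr k)^{2}$ and $r^{-2}\varphi$ close to $\varphi_{0}$, and neither follows from the geometric smallness of $\Sigma$ alone: the Lagrange parameter of a leaf of $\mathcal{F}_{2}$ is a priori arbitrary, and a $\mathcal{C}^{4,1/2}$-small graph function need not, after division by $r^{2}$, be near $\varphi_{0}$. The paper extracts both facts from the Euler--Lagrange equation itself: the Schauder estimate $\|\varphi^{*}\|_{\mathcal{C}^{2,1/2}}\le Cr^{*2}$ from the rescaled equation \eqref{1b}, then the projection onto $K_{0}$ forces $\lambda(0)=\lambda_{0}$ and the projection onto $K^{\perp}$ forces $r^{-2}\varphi^{*}\to\varphi_{0}$. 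You need to add these projection arguments before the uniqueness clause of the implicit function theorem can be invoked. With these two repairs your outline coincides with the paper's proof.
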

\begin{proof}
The idea of the proof is to show that the leaves of the foliation can be expressed as normal graphs over geodesic spheres. Once this is done, we obtain the uniqueness of the foliation  from the implicit function theorems used in Theorem \ref{primfoli}.

Consider the leaves of the foliation $\mathcal{F}_2$ being parametrized by their area radius, that is, $S_r \in \mathcal{F}_2$ where $r$ satisfies $ |S_r| = 4 \pi r^2$, and we consider  $r$ so small that the leaves are contained in a small geodesic sphere where we have a decomposition of the metric as in (\ref{normalcor}). By assumption, the leaves  satisfy $\mathcal{H}(S_r) < 4\pi +\epsilon^2$ and $|S_r|< \epsilon^2 $. Therefore,  by considering $r$ smaller if necessary, we can apply directly  \cite[Proposition 3.2,   Corollary 3.3]{Alex}, obtaining that the surfaces satisfy
\begin{equation}\label{estimla}
  \int_{S_r} |\nabla^2 H|^2+ H^2 |\nabla H|^2 +H^2 |\nabla \mathring{B}|^2 + H^4|\mathring{B}|^2 d\mu <C,  
\end{equation}
\begin{equation}\label{estimsho}
    ||\mathring{B}||_{L^2(S_r)} < C| S_r|, \, \quad \, \left|\left|H - \frac{2}{r} \right|\right|_{L^\infty(S_r)} < C |S_r|^{\frac{1}{2}},
\end{equation}
where the $C$'s are constants depending on the injectivity radius of $p$, $\epsilon$ and of the value of $Ric$, $\nabla Ric$ at $p$. Note also that by  using (\ref{estimla}), (\ref{estimsho})  and Lemma \ref{Sobol} one can reproduce the proof of \cite[Lemma 2.10]{Janme} in the exact same way obtaining the estimate 
$$ ||\mathring{B}||_{L^\infty(S_r)} \leq C r.$$
From (\ref{estimsho})  and by considering $r$ small enough,  we can apply Lemma \ref{refine2}, obtaining 
\begin{equation}\label{normal2}
 \big|\big|\frac{y}{r} -\nu \big|\big|_{L^2(S_r)} <Cr^3,
\end{equation}
where $y$ denotes the position vector on some normal coordinates centered at a point $p_0$. To see that we can express our leaves as graphs over  geodesic spheres we need the normal $\nu$ to $S_r$,   to satisfy on euclidean space that  $\langle \nu, \frac{y}{r}  \rangle \neq 0$, and this is true if we have that $||\frac{y}{r} -\nu ||_{L^\infty (S_r)} $ is small. 
For any tangent vector $e_i$ to $S_r$ and its tangential projection to a sphere of radius $r$ in euclidean space $e_i^T = e_i- \delta(e_i, \frac{y}{r}) \frac{y}{r}$, we have
$$\nabla^E_{e_i} \frac{y}{r}=\frac{1}{r}\left( e_i- \delta(e_i, \frac{y}{r}) \frac{y}{r}\right) \quad \text{and} \quad \nabla_{e_i} \nu =\frac{1}{2} H e_i + \mathring{B}(e_i, \cdot) $$

then by using that $\delta(e_i, \frac{y}{r}) = (\delta-g)(e_i ,\frac{y}{r}) + g(e_i,\frac{y}{r} -\nu ) $ and the decay of the metric $g$ (like in Lemma \ref{comparison}) we obtain 
\begin{equation}\label{devire}
    \big|\nabla \big(\nu -\frac{y}{r}\big) \big| < C\big(|\partial g| + \big|H -\frac{2}{r}\big| +|\mathring{B}| +r^{-1}\big(|g- \delta| + \big|\frac{y}{r} -\nu\big| \big)\big)< Cr +Cr^{-1} \big|\frac{y}{r} -\nu\big|
\end{equation}
for some constant $C$.  From this inequality and (\ref{normal2}), we obtain $ ||\nabla (\frac{y}{r} -\nu) ||_{L^2(S_r)}  <C r^2 $, then using the inequality  (\ref{Sobol2}) from Lemma \ref{Sobol} with $p=2$ we obtain $||\frac{y}{r} -\nu ||_{L^4(S_r)} <Cr^\frac{5}{2} $, now using (\ref{devire}) again  we have  $||\nabla (\frac{y}{r} -\nu) ||_{L^4(S_r)} <Cr^\frac{3}{2} $. Finally, using the Sobolev inequality (\ref{Sobolev2}) for $p=4$ we obtain 
$$\big|\big|\frac{y}{r} -\nu \big|\big|_{L^\infty (S_r)} < C r^2.  $$
Then for $r$ small enough, we can express $S_r$ as a graph over a geodesic sphere of radius $ \Tilde{r}=\Tilde{r} (r)$ centered on a point $p_r$, then we can also characterize the leaves by this radius and denote them by $S_{\Tilde{r}}$. Let us change the notation and simply denote  $\Tilde{r}$ by $r$. Then we have $S_r= F_{\Tilde{\tau}(r)} (\alpha_r (S_{\Tilde{\varphi}}))$ for some $\Tilde{\varphi} \in \mathcal{C}^{4, \frac{1}{2}}(\mathbb{S}^2)$ and  $\Tilde{\tau}(r)$ which satisfies  $\Tilde{\tau}(r) \to 0 $ as $r \to 0$ and $c(\Tilde{\tau}) = \exp_p (\Tilde{\tau}^i e_i)$  where we used the notation of (\ref{coordinates}). 
    
Denoting by $\mathbb{S}^2(a)$ the unit sphere of center $a$ in $\mathbb{R}^3$, $ S_\varphi(a) :=\{x+ \varphi(x) \nu(x): x \in \mathbb{S}^2(a) \}$ and defining  $\Tilde{S}_r :=\alpha_{1/r}(F^{-1}_0(S_r))$ with Euclidean center of mass denoted by $x(r)$, we have that   the previous is equivalent to have $\Tilde{S}_r=S_{\Bar{\varphi}(r)}(x(r)) $ for some smooth function  $\Bar{\varphi}(r)$ on $\mathbb{S}^2(a)$. Furthermore, by Theorem \ref{muller}, our surfaces approach uniformly a round sphere in Euclidean space as $r \to 0$. Hence, in particular,  we obtain that $||\Bar{\varphi}(r) ||_{\mathcal{C}^5} \to 0$ as $r \to 0$.  Observing that this matches the main conclusion of \cite[Lemma 2.3]{Ye}, we can now apply the subsequent results, specifically  \cite[Corollary 2.1 and Lemma 2.4]{Ye}, directly to our setting. These results tell us that we can perturb the center of our spheres with a smooth function $a(r)$ with $ a(r)\in \mathbb{R}^3$ and $\lim_{r \to 0} ||a(r)|| =0$,  so that we can express our surfaces as  $S_r=F_{r(x(r) +a(r))} (\alpha_r (S_{\varphi(r,a(r))})) $, where $\varphi(r,a(r)) $ is some smooth function  on $\mathbb{S}^2$ which satisfies $\pi_1(\varphi(r,a(r)))=0 $ and that $||\varphi(r,a(r)) ||_{\mathcal{C}^5} \to 0$ as $r \to 0$. 
    
    We want  $\varphi$ to satisfy the same conditions as  in Theorem \ref{primfoli}, ensuring we can apply the implicit function theorem’s uniqueness result. In particular, we also require $ \pi_0(\varphi(r,a(r)))=0$. To achieve this, we will need to perturb the radius of our spheres.

    Denote by $m(\varphi(r)):= \pi_0(\varphi(r,a(r)) )= \frac{1}{4 \pi} \int_{\mathbb{S}^2} \varphi (r,a(r)) d \mu $ and note that $ m(\varphi(r)) \to 0  $ for $r \to 0$, then define
    \begin{equation}
        \varphi^* (r):= \frac{\varphi(r,a(r))- m(\varphi(r))}{ 1 +  m(\varphi(r))}  \quad \text{and \quad}  r^*(r):=r(1+ m(\varphi(r))).
    \end{equation}
    We then have $ \pi (\varphi^* (r))=0$ and as $r^* x(1+ \varphi^*(r) ) = r  x(1+ \varphi(r,a(r)))$ for $ x \in \mathbb{S}^2$  then $$S_r  =  F_{\tau(r)} (\alpha_r (S_{\varphi(r,a(r))})) =F_{\tau(r)} (\alpha_{r^*} (S_{\varphi^*(r)})), $$ where $\tau(r) = r(x(r) +a(r)) $. As $ r^* \to 0$ for $r\to 0$ and for $r$ small enough the relation between $r$ and $r^*$ is injective, we can write all of the relation of before in terms of $r^*$ instead of $r$, then we  write$$S_{r^*}=F_{\tau(r^*)} (\alpha_{r^*} (S_{\varphi^*(r^*)}))  $$where we also have  that  $\tau(r^*) \to 0$ and $||\varphi^*(r^*) ||_{\mathcal{C}^5}\to 0 $ for $r^* \to 0$. 
     
     As the surfaces $ S_{r^*}$ are area constraint critical points of the Hawking functional, we have that on the manifold $(\mathbb{B}_{2r_p}, g_{\tau,r},$ $k_{\tau, r} )$ they satisfy $\Phi(r^*,\tau(r^*),  \varphi^*,\lambda(r^*) )=0$ for some constants $\lambda(r^*) $. We have that $\varphi^* = \mathcal{O}(r^*)$ and then $\frac{\varphi^*}{r^*}  $ is bounded. Then  as in (\ref{expan}), we have 
     \begin{equation}\label{1b}
    \begin{split}
      -\Delta^{\mathbb{S}^2} (- \Delta^{\mathbb{S}^2} -2)\varphi^*     &= -W_1(r^*, \tau , 0, \lambda) -W_2(r^*, \tau , 0, \lambda)   \\
        &- r^{*2} \int_0^1 \int_0^1 t \Phi_{\varphi \varphi}(sr^*, \tau,st  \varphi^*,\lambda )  \frac{\varphi^*}{r^*} \frac{\varphi^*}{r^*} ds dt\\
        &- r^{*3}\int_0^1 \int_0^1 \int_0^1 s \Phi_{\varphi r r}(usr^*, \tau,ust  \varphi^*,\lambda )   \frac{\varphi^*}{r^*} du ds dt\\
        &- r^{*3} \int_0^1 \int_0^1 \int_0^1 s t \Phi_{\varphi \varphi r} (usr^*, \tau,ust  \varphi^*,\lambda )  \frac{\varphi^*}{r^*} \frac{\varphi^*}{r^*} du ds dt   + \mathcal{O}(r^{*2}) \\
        &=:r^{*2} f(r^*)
    \end{split}
\end{equation}
 where $f(r^*)$ is bounded. Then $\varphi^*$ is a solution of the elliptic PDE $ -\Delta^{\mathbb{S}^2} (- \Delta^{\mathbb{S}^2} -2) \varphi =r^{* 2} f(r^*)  $ in $K^\bot$ then, by using Schauder estimates and the injectivity of $L$ in $K^\bot$ we have $||\varphi^* ||_{\mathcal{C}^{2,\frac{1}{2}}} \leq C r^{*2}$ (for details of these result, see \cite[Chapter 6]{GilTru}). Now considering the projection to $K_0$ like in (\ref{k0}) and dividing by $r^{*2}$ we have 
 \begin{equation}
    \begin{split}
        0=&\Tilde{\pi}_0\left( \frac{\Phi(r^*,\tau,  \varphi^*,\lambda )}{r^{*2}} \right) =8 \pi (  \lambda(0) +  \frac{1}{3}\mathrm{Sc}^\tau + \frac{1}{15} |k^\tau|^2 + \frac{1}{5} (\tr k^\tau)^2) \\
         & +\Tilde{\pi} \Big( \int_0^1 \int_0^1 t \Phi_{\varphi \varphi}(sr^*, \tau,st  \varphi^*, \lambda )  \frac{\varphi^*}{r^*} \frac{\varphi^*}{r^*} ds dt\\
        &+ r^* \int_0^1 \int_0^1 \int_0^1 s \Phi_{\varphi r r}(usr^*, \tau,ust  \varphi^*, \lambda )   \frac{\varphi^*}{r^*} du ds dt\\
        &+ r^* \int_0^1 \int_0^1 \int_0^1 s t \Phi_{\varphi \varphi r} (usr^*, \tau,ust  \varphi^*,\lambda )  \frac{\varphi^*}{r^*} \frac{\varphi^*}{r^*} du ds dt \Big)  + \mathcal{O}(r^{*2})  . \\
    \end{split}
\end{equation}
Then as $||\frac{\varphi^*}{r^*} ||_{\mathcal{C}^{2}} \to 0$ for $r^* \to 0$, we have that
$$\lambda(0) =  - \frac{1}{3}\mathrm{Sc} - \frac{2}{3} |k|^2 +\frac{2}{3} (\tr k)^2.$$
Finally,  as $ 0=\pi^\bot\left( \Phi(r^*,\tau,  \varphi^*,\lambda ) \right) $ and setting $\varphi(r^*):=r^{-2} \varphi^*(r^*)$ when considering the projection to $K^\bot$ just like in (\ref{kort}),  we see that $\varphi (0)$  is given by the solution of the equation (\ref{phi_0}), then by the uniqueness of the implicit function theorems used in Theorem \ref{primfoli} the functions $\varphi(r^*)$, $\tau(r^*)$ and $\lambda(r^*)$ must agree with the ones found in the  theorem on a neighborhood of $r^*=0$. 

For $(ii)$, note that we did not use the foliation property in the previous arguments.
\end{proof}
From the proof of the previous theorem, we can also obtain directly the nonexistence result found in  \cite[Theorem 1.2]{Alex}.  Note  that for our proof, we use estimates found in \cite{Alex}.
\begin{theorem}\label{nonexis}
There exist an $\epsilon_0 >0$ such that  if at a point $p \in M$,  $\nabla(  \mathrm{Sc} + \frac{3}{5} (\tr k)^2 + \frac{1}{5} |k|^2)\neq 0 $  then there exists no concentration  of  area constrained critical spheres of the Hawking functional by surfaces satisfying $\mathcal{H}(S_r) < 4\pi +\epsilon_0^2$ and $|S_r|< \epsilon_0^2 $.
\end{theorem}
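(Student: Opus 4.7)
The plan is to argue by contradiction following the reduction scheme of Theorem \ref{uniq}(ii), and then to project onto $K_1$ rather than onto $K_0$ or $K^\bot$. Suppose a concentration $\{S_r\}$ of area-constrained critical spheres of $\mathcal{H}$ exists around $p$ with $\mathcal{H}(S_r) < 4\pi + \epsilon_0^2$ and $|S_r| < \epsilon_0^2$. Parametrize the leaves by their area radius, so that $|S_r| = 4\pi r^2$ and $r \to 0$ along the concentration.

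The first step reproduces the graph-representation argument of Theorem \ref{uniq}. For $\epsilon_0$ sufficiently small, the a priori estimates (\ref{estimla})--(\ref{estimsho}) from \cite[Proposition 3.2, Corollary 3.3]{Alex}, together with the Sobolev bootstrap via Lemma \ref{Sobol} and Lemma \ref{refine2}, yield $\|y/r - \nu\|_{L^\infty(S_r)} \leq Cr^2$, so that each $S_r$ is a normal graph over a geodesic sphere. After the center-of-mass perturbation of \cite[Corollary 2.1, Lemma 2.4]{Ye} and the radius rescaling that eliminates the $K_0$-component, we obtain
$$ S_{r^*} = F_{\tau(r^*)}\bigl(\alpha_{r^*}(S_{\varphi^*(r^*)})\bigr) $$
with $\varphi^*(r^*) \in K^\bot$, $\pi_1(\varphi^*) = 0$, $\tau(r^*) \to 0$ as $r^* \to 0$, and by Schauder theory applied to (\ref{1b}), $\|\varphi^*\|_{\mathcal{C}^{2,1/2}} \leq C r^{*2}$. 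In particular, writing $\varphi = r^{*-2}\varphi^*$, the function $\varphi$ remains bounded in $\mathcal{C}^{2,1/2}$ as $r^* \to 0$.

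The second step projects the identity $\Phi(r^*, \tau(r^*), \varphi^*(r^*), \lambda(r^*)) = 0$ onto $K_1$ and divides by $r^{*3}$. Using the expansion (\ref{expan}): the term $\Phi_\varphi(0,\tau,0,\lambda)\varphi r^{*2}$ vanishes under $\Tilde{\pi}_1$ since $K_1 \subset \ker(-\Delta^{\mathbb{S}^2}(-\Delta^{\mathbb{S}^2}-2))$; the term $\Phi_{\varphi r}(0,\tau,0,\lambda)\varphi r^{*3}$ vanishes by Lemma \ref{3.2}; and the remaining integral terms are of order $r^{*4}$ with uniformly bounded integrands. The same computation that produced (\ref{proj2}) in Theorem \ref{primfoli} therefore gives
\begin{equation*}
0 \;=\; \Tilde{\pi}_1\!\left( \frac{\Phi(r^*,\tau(r^*), \varphi^*(r^*), \lambda(r^*))}{r^{*3}} \right) \;=\; \frac{4\pi}{3}\,\partial_l\!\left(\mathrm{Sc} + \tfrac{3}{5}(\tr k)^2 + \tfrac{1}{5}|k|^2\right)\!\Big|_{c(\tau(r^*))} e_l \,+\, \mathcal{O}(r^*).
\end{equation*}
Passing to the limit $r^* \to 0$ and using that $c(\tau(r^*)) \to p$ forces $\nabla(\mathrm{Sc} + \tfrac{3}{5}(\tr k)^2 + \tfrac{1}{5}|k|^2)|_p = 0$, contradicting the hypothesis.

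The main obstacle is the uniform choice of $\epsilon_0$: it must be small enough that Friedrich's estimates, the Müller--Šverák convergence of Theorem \ref{muller}, the center perturbation from \cite{Ye}, and the Schauder bound on $\varphi^*$ all hold with constants depending only on the local geometry at $p$ and on $\epsilon_0$ itself. Once this uniformity is in place, the argument is purely algebraic: the $K_1$-projection computation already performed for (\ref{proj2}) does all the work, and no new estimates are required beyond those compiled in the proof of Theorem \ref{uniq}.
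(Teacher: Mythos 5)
Your proposal is correct and follows essentially the same route as the paper: reduce to the graph representation $S_{r^*}=F_{\tau(r^*)}(\alpha_{r^*}(S_{\varphi^*(r^*)}))$ with $\|\varphi^*\|_{\mathcal{C}^{2,1/2}}\leq Cr^{*2}$ exactly as in Theorem \ref{uniq}, project $\Phi(r^*,\tau,\varphi^*,\lambda)=0$ onto $K_1$, divide by $r^{*3}$, and let $r^*\to 0$ to force $\nabla(\mathrm{Sc}+\tfrac{3}{5}(\tr k)^2+\tfrac{1}{5}|k|^2)=0$ at $p$ via the computation of (\ref{proj2}). The paper's own proof is just a terser version of the same argument, citing the previous theorem's setup wholesale.
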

\begin{proof}
We consider $\epsilon_0$ small enough to be in the setting of the proof of the previous theorem (so small enough to apply  \cite[Proposition 3.2 ]{Alex}). Suppose  we have such surfaces and $\nabla(  \mathrm{Sc} + \frac{3}{5} (\tr k)^2 + \frac{1}{5} |k|^2)\neq 0 $.

As in the proof of the previous theorem, having that on the manifold $(\mathbb{B}_{2r_p}, g_{\tau,r},$ $k_{\tau, r} )$ our surfaces satisfy $\Phi(r^*,\tau(r^*),  \varphi^*,\lambda(r^*) )=0$ we can also consider the projection to $K_1$ and dividing by $r^{*3}$ obtain
\begin{equation}
    \begin{split}
        0=&\Tilde{\pi}_1\left( \frac{\Phi(r^*,\tau,  \varphi^*,\lambda )}{r^{*3}} \right) =\frac{4 \pi}{ 3}  \mathrm{Sc}^\tau_{,i}e_i +  \Tilde{\pi}_1 \left(  \frac{ W_2(r, \tau, 0, \lambda )}{r^{*3}}  \right) \\
         & +\Tilde{\pi} \Big( \int_0^1 \int_0^1 t \Phi_{\varphi \varphi}(sr^*, \tau,st  \varphi^*, \lambda )  \frac{\varphi^*}{r^{*2}} \frac{\varphi^*}{r^*} ds dt\\
         &+  \int_0^1 \int_0^1 \int_0^1 s \Phi_{\varphi r r}(usr^*, \tau,ust  \varphi^*, \lambda )   \frac{\varphi^*}{r^*} du ds dt\\
        &+  \int_0^1 \int_0^1 \int_0^1 s t \Phi_{\varphi \varphi r} (usr^*, \tau,ust  \varphi^*,\lambda )  \frac{\varphi^*}{r^*} \frac{\varphi^*}{r^*} du ds dt \Big).   \\
    \end{split}
\end{equation}
Then as $||\varphi^* ||_{\mathcal{C}^{2}} \leq C r^{*2}$, we find taking $r^* \to 0$ that $\frac{4 \pi}{ 3}  \mathrm{Sc}_{,i}e_i +  \Tilde{\pi}_1 \left(  \frac{ W_2(r, \tau, 0, \lambda )}{r^{*3}}  \right)_{|r=0}=0$ and proceeding as it was done for ($\ref{proj2} $) we find that $\nabla(  \mathrm{Sc} + \frac{3}{5} (\tr k)^2 + \frac{1}{5} |k|^2)= 0 $, a contradiction. 
\end{proof}

\section{Discrepancy of small sphere  limits}\label{secdis}
In this section, we will compare the small sphere limit when approaching a  point along a null cone in a spacetime $M^4 $ with  the small sphere limit  along a spacelike hypersurface $M \subset M^4$ like it was done in Section \ref{sec2}. An index $(\cdot)^4$ will denote the geometric quantities on the spacetime $M^4$. As in Section \ref{sec2}, the quantities in $M$ have no index.

Note that our critical surfaces of Theorems \ref{primfoli} and \ref{uniq} are small deformations of geodesic spheres which  satisfy that the smaller the radius, the closer the surface is to a geodesic sphere. Therefore,  to understand the discrepancy mentioned in Section \ref{secsmall}, it is a good idea to study the expansion of the Hawking energy on geodesic spheres of small radius. Recalling that the geodesic spheres are parameterized by 
 \begin{equation}
     X_G:\mathbb{R}^+ \times \mathbb{S}^2 \mapsto M,\quad (r,x) \mapsto \exp_{p}(rx)  
 \end{equation}
and that  the mean curvature of the  geodesic sphere can be expressed as 
\begin{equation}\label{H}
    H_G(x)= \frac{2}{r}-\frac{1}{3}  \mathrm{Ric}_{ij}(0) x^i x^j r- \frac{1}{4} \mathrm{Ric}_{ij;k}(0)x^ix^jx^k r^2 + \mathcal{O}(r^4).
\end{equation}
were $\mathrm{Ric}$ is evaluated at $p$. One can proceed as in \cite{Fashitam} and find that in the totally geodesic case ($k=0$), the following expansion is found
\begin{equation}
 \mathcal{E}(S_r) = \sqrt{\frac{|S_r|}{16 \pi}} \left( 1- \frac{1}{16 \pi} \int_{S_r} H^2  d\mu \right) = \frac{r^3}{12} \mathrm{Sc}_p +   \mathcal{O}(r^5)
\end{equation}
where the Hawking energy is evaluated on the geodesic sphere $S_r$ of radius $r$ and centered on a point $p$.  We can then compute, as was done in Theorem \ref{primfoli} that 
\begin{equation}
    \begin{split}
      \int_{S_r} P^2  d\mu&= 4 \pi r^2 (\tr k)^2 -2 \tr k k_{ij} \int_{\mathbb{S}^2} x^i x^j  d\mu + k_{ij}k_{pq}\int_{\mathbb{S}^2} x^i x^j x^p x^q  d\mu \\
      &= \frac{8\pi}{5} r^2  (\tr k)^2 + \frac{8\pi}{15} r^2 |k|^2
    \end{split}
\end{equation}
 with this, we then get the general expansion 
 \begin{equation}\label{geoexp}
     \mathcal{E}(S_r)=\sqrt{\frac{|S_r|}{16 \pi}} \left( 1- \frac{1}{16 \pi} \int_{S_r} H^2-P^2  d\mu \right)= \frac{r^3}{12}(\mathrm{Sc}_p  + \frac{3}{5} (\tr k)^2 + \frac{1}{5} |K|^2 ) +\mathcal{O}(r^5)
 \end{equation}
This result would agree with the result found in \cite{Alex}; therefore this gives us the idea that the problem in this discrepancy lies in the difference between the light cuts spheres and the geodesic spheres. To see this, we will follow \cite{Wang} and \cite{wangyau} in order to study in more detail the light cuts spheres and try to compare them with the geodesic spheres. 
\begin{remark}
 A natural idea would be to consider  the small sphere limit evaluating on space time constant mean curvature (STCMC) surfaces, that is, surfaces satisfying $ H^2-P^2 =4r^{-2}= \text{Constant}$. The local behaviour of these surfaces was studied in \cite{Me}, and it was shown that these surfaces are small deformations of geodesic spheres that also satisfy that the smaller the radius, the closer  the surface is to a geodesic sphere. Therefore such a small sphere limit  would also lead to (\ref{geoexp}).
\end{remark}
Let  $C_p$ be the future null cone of $p$, that is the null hypersurface generated by future null geodesics starting at $p$. Pick any future directed timelike unit vector $e_0$ at $p$, then to parameterize the light cuts $\Sigma_l$ of $C_p$ we will consider  the  map
\begin{equation}
     X_{lc}:[0, \delta) \times \mathbb{S}^2 \mapsto M^4  
 \end{equation}
such that for each point  $x \in \mathbb{S}^2  $ and $l\in [0, \delta)  $, $X_{lc}(x, l)$ is a null geodesic parameterized by the affine parameter $l$, with $X_{lc}(x, 0)=p$ and $\frac{\partial X_{lc}(x,0)}{\partial l} \in T_pM^4$ a null vector which satisfies $ \langle \frac{\partial  X_{lc}(x,0)}{\partial l} , e_0 \rangle =-1$. We define $L=\frac{\partial  X_{lc}}{\partial l} $ to be the null generator with $\nabla^4_L L=0$. We also choose a local
coordinate system $\{ u_a \}_{a=1,2}$ on $\mathbb{S}^2$ such that $\partial_a =
\frac{\partial  X_{lc}}{\partial u_a} $, $a = 1, 2$ form a tangent basis to $\Sigma_l$. We define $\Bar{L}$ to be the null normal vector along $\Sigma_l$ such that $ \langle \Bar{L} , L \rangle =-1$.  With this, we can define 
$$\sigma_{ab}^+ := \langle \partial_a, \nabla^4_{\partial_b} L  \rangle \quad \quad  \sigma_{ab}^- := \langle \partial_a, \nabla^4_{\partial_b} \Bar{L}  \rangle  $$
Then we have that the null expansions of the null cone are given by the traces $\theta^+ = \tr \sigma^+ $ and $\theta^- =\tr \sigma^-$. In this setting and with the help of normal coordinates ($y^0,\, y^i$, $i=0,..,3$ with $\frac{\partial}{\partial y_0}=e_0 $), the vectors $L$ and $\Bar{L}$ can be expressed as 
$$L= e_0 + \nu + \mathcal{O}(l) \quad \quad \Bar{L}= \frac{1}{2}(e_0 - \nu) + \mathcal{O}(l)  $$
where $ \nu = x^i \frac{\partial}{\partial x^i}$ and $x \in \mathbb{S}^2 $. We will consider a situation like in  figure \ref{figure}, that is supposing that the vector $e_0$ is a normal vector to a hypersurface $M$.    Using the results obtained in \cite{Wang} we have then that the induced metric on $\Sigma_l$ is given by 
\begin{equation}\label{metriclight}
    \begin{split}
        g^{lc}_{ab} &= l^2 \eta_{ab} +\frac{1}{3} \mathrm{Rm}^4(e_0 + \nu,\partial_a ,\partial_b,e_0 + \nu) l^2 + \mathcal{O}(l^3)\\
    \end{split}
\end{equation}
where $\eta $ is the standard metric on the sphere $\mathbb{S}^2$ and $\mathrm{Rm}^4$ is evaluated at $p$, the area of $\Sigma_l$ is given by 
\begin{equation}\label{volligt}
    |\Sigma_l| =4 \pi l^2 -\frac{2 \pi}{9}l^4 (4  \mathrm{Ric}^4 (e_0,e_0) + \mathrm{Sc}^4 ) + \mathcal{O}(l^6)
\end{equation}
 Finally, by \cite[Lemma 3.3, Lemma 3.2]{Wang}, we have that the expansions are 
 \begin{equation}
    \begin{split}
        \theta^+(l) =& \frac{2}{l} - \frac{1}{3} \mathrm{Ric}^4 (e_0 + \nu,e_0 + \nu) l+ \mathcal{O}(l^3)\\
        \theta^-(l) =& -\frac{1}{l} - \big( \frac{2}{3}\mathrm{Ric}^4 (e_0 + \nu,\frac{1}{2}(e_0 - \nu))  - \mathrm{Rm}^4 (e_0 + \nu,\frac{1}{2}(e_0 - \nu),e_0 + \nu, \frac{1}{2}(e_0 - \nu))\\
         &+ \frac{1}{6} \mathrm{Ric}^4 (e_0 + \nu,e_0 + \nu)\big) l + \mathcal{O}(l^3)\\
    \end{split}
 \end{equation}
  and therefore using that the mean curvature of $\Sigma_l$ is given by $H= \frac{\theta^+}{2} -\theta^-$ we obtain 
 \begin{equation}\label{meanli}
 \begin{split}
      H_{lc}=& \frac{2}{l} + \left(  \frac{1}{3} \mathrm{Ric}^4 (e_0 ,e_0) - \frac{1}{3} \mathrm{Ric}^4 (\nu,\nu)+\mathrm{Rm}^4 (\nu,e_0, e_0 ,  \nu) \right)l + \mathcal{O}(l^3)  
 \end{split}
 \end{equation}
where everything is evaluated at $p$. Now we want to compare the light cuts with the geodesic spheres, for this we will consider two of the surfaces with the same (small) area, that is $|S_r|=|\Sigma_l|$. First we want to find the difference between the parameters $r$ and $l$. Note that the area of a geodesic sphere of radius $r$ is given by
\begin{equation}\label{meangeo}
\begin{split}
     |S_r| =&4 \pi r^2 -\frac{2 \pi}{9}r^4  \mathrm{Sc} + \mathcal{O}(r^6)\\
     =& 4 \pi r^2 -\frac{2 \pi}{9}r^4(  \mathrm{Sc}^4 +2 \mathrm{Ric}^4 (e_0 ,e_0) - (\tr k)^2 + |k|^2  )+ \mathcal{O}(r^6)
\end{split}
\end{equation}
where in the second line we used the Gauss equation $\mathrm{Sc}= \mathrm{Sc}^4 +2 \mathrm{Ric}^4 (e_0 ,e_0) - (\tr k)^2 + |k|^2 $ (for the Lorentzian setting). Now comparing (\ref{volligt}) and (\ref{meangeo}) we can obtain the following relation 
\begin{equation}\label{r-l}
\begin{split}
     r-l=& (18- (r^2 +l^2)\mathrm{Sc}^4 )^{-1} \left( \frac{r^4}{(r+l)} (|k|^2 -(\tr k)^2) + \frac{2 (r^4 -2l^4)}{(r+l)}  \mathrm{Ric}^4 (e_0 ,e_0)  + \mathcal{O}(l^5) + \mathcal{O}(r^5) \right)\\
     =& \frac{1}{18} \left( \frac{r^4}{(r+l)} (|k|^2 -(\tr k)^2) + \frac{2 (r^4 -2l^4)}{(r+l)}  \mathrm{Ric}^4 (e_0 ,e_0)  + \mathcal{O}(l^5)+\mathcal{O}(r^5) \right)
\end{split}
\end{equation}
where we consider $r$ and $l$ to be small. As our surfaces are both parameterized over $[0, \delta) \times \mathbb{S}^2$ for some $\delta>0$, we can compare its different geometric quantities as functions. First, note that in normal coordinates, the metric of the geodesic spheres can be expressed as (by using the Gauss equation)
\begin{equation}
    \begin{split}
        g^{G}_{ab} &= r^2 \eta_{ab} +\frac{1}{3} \mathrm{Rm}(\nu,\partial_a ,\partial_b, \nu) r^2 + \mathcal{O}(r^3)\\
        &= r^2 \eta_{ab} +\frac{1}{3}\big( \mathrm{Rm}^4(\nu,\partial_a ,\partial_b, \nu)-k(\nu,\nu)k(\partial_a ,\partial_b,)+ k(\nu,\partial_a)k(\nu ,\partial_b) \big) r^2 + \mathcal{O}(r^3).
    \end{split}
\end{equation}
This expansion of the metric is similar to the one for the metric of the light cut (\ref{metriclight}), where the first term is just the metric of the round sphere. However, the second terms of the expansions are different. This would suggest that the two spheres are intrinsically different, but comparing the metrics is not enough since they are coordinate dependent quantities.  We will  compare different scalars directly to see that both spheres are geometrically distinct.   First,  we are going to compare the scalar curvature of the two spheres.  By \cite[Lemma 3.6]{Wang}, we have that the scalar curvature of the light cuts is given by 
\begin{equation}
    \mathrm{Sc}_{lc} = \frac{2}{l^2} + \mathrm{Sc}^4 + \frac{8}{3}(\mathrm{Ric}^4 (e_0 ,e_0)  -\mathrm{Ric}^4 (\nu ,\nu)  ) -4\mathrm{Rm}^4(e_0 ,\nu,e_0,\nu)  + \mathcal{O}(l^2)
\end{equation}
where $\mathrm{Sc}^4$, $\mathrm{Ric}^4$ and $\mathrm{Rm}^4$ are evaluated at $p$. Now, for the case of a geodesic sphere, we have that the Gauss curvature was calculated in  \cite{locwill} and from this we obtain 
\begin{equation}
\begin{split}
      \mathrm{Sc}_{G} =& \frac{2}{r^2} - \frac{2}{3} \mathrm{Ric}(\nu,\nu)+\mathcal{O}(r)\\
      =&   \frac{2}{r^2} - \frac{2}{3}\big(\mathrm{Ric}^4(\nu,\nu) + \mathrm{Rm}^4(\nu,e_0,e_0,\nu) - \tr k \, k(\nu,\nu) + \langle k(\nu, \cdot),  k(\cdot, \nu) \rangle  \big) +\mathcal{O}(r)  
\end{split}
\end{equation}
where as always all the quantities are evaluated in the point $p$ and $ \nu = x^i \frac{\partial}{\partial y^i}$ for $x \in \mathbb{S}^2 $.

Now, as both spheres are parameterized   on  $[0, \delta) \times \mathbb{S}^2$,  we compare the two scalar curvatures as a function over $[0, \delta) \times \mathbb{S}^2$ (assuming that they are evaluated in the same point $x\in \mathbb{S}^2$)  and  use (\ref{r-l}) to obtain
\begin{equation}
\begin{split}
     \mathrm{Sc}_{G}-  \mathrm{Sc}_{lc} =&  2T(\nu,\nu)- \frac{8}{3} \mathrm{Ric}^4 (e_0 ,e_0)   
     +\frac{2}{3}    ( \tr k \, k(\nu,\nu) - \langle k(\nu, \cdot),  k(\cdot, \nu) \rangle)\\   & - \frac{14}{3}\mathrm{Rm}^4(\nu,e_0,e_0,\nu)  + \mathcal{O}(r)  + \mathcal{O}(l^2) 
\end{split}
\end{equation}
where $T= \mathrm{Ric}^4 (\nu ,\nu) -\frac{1}{2} \mathrm{Sc}^4$.  As this quantity is in general nonzero, we conclude that the spheres are intrinsically different (note that if we consider the two functions to be evaluated in two distinct points of $\mathbb{S}^2$  the quantity is also in general nonzero).

We continue with the mean curvature of the surfaces, which gives us a measure of their  extrinsic curvature. In the case of the geodesic sphere by (\ref{H}) and the Gauss equation, its mean curvature can be expressed as 
\begin{equation}\label{H2}
    H_G(x)= \frac{2}{r}-\frac{1}{3} ( \mathrm{Ric}^4(\nu,\nu) + \mathrm{Rm}^4(\nu,e_0,e_0,\nu) - \tr k \, k(\nu,\nu) + \langle k(\nu, \cdot),  k(\cdot, \nu) \rangle) r + \mathcal{O}(r^4).
\end{equation}
  Now we compare the two mean curvatures (\ref{meanli}) and (\ref{H2}) (considering that they are evaluated in the same point $x\in \mathbb{S}^2$)  using (\ref{r-l}) obtaining after some calculations 
  \begin{equation}
  \begin{split}
       H_G- H_{lc}=& -\frac{1}{3} \Big(\frac{2}{3} \mathrm{Ric}^4 (e_0 ,e_0)  +\frac{1}{6}(|k|^2 -(\tr k)^2) +4\mathrm{Rm}^4(\nu,e_0,e_0,\nu)
        + \langle k(\nu, \cdot),  k(\cdot, \nu) \rangle\\ &- \tr k \, k(\nu,\nu) \Big)r+\mathcal{O}(r^2) +\mathcal{O}(l^2).
  \end{split}
  \end{equation}
  This result is in general nonzero (as before, even if the functions are  evaluated in two different points of $\mathbb{S}^2$). Then we have that in general, the light cuts and the geodesic spheres are intrinsically and extrinsically quite different, obtaining  different values for the Hawking energy. However, it is direct to see that if we are considering a totally geodesic hypersurface ($k=0$) then both small sphere limits will agree, and if we are also in the Minkowski space ($\mathrm{Rm}^4=0$) then the two spheres would be geometrically identical. 
  
  \begin{remark}\label{excess}
  Note that when comparing the local expansion of the Hawking energy along the critical surfaces (this is  the  expansion  (\ref{geoexp}) as the surfaces tend to  converge to geodesic spheres) with the expansion along light cuts   (\ref{lightex}), which in principle captures energy in a right way  we obtain 
  \begin{equation}
       \mathcal{E}(S_r) -  \mathcal{E}(\Sigma_l)= \frac{6}{5} |\mathring{k}|^2 l^3  + \mathcal{O}(r^5) + \mathcal{O}(l^5)>0
  \end{equation}
  where we consider $|S_r|=|\Sigma_l|$ and used (\ref{r-l}) with $l$ and $r$ small, this suggests that the geodesic spheres and the critical surfaces of the Hawking functional induce an excess of energy measured by the Hawking energy. This is a result to take into account when evaluating the Hawking energy on these surfaces. 
  \end{remark}

  \begin{remark}
   Note that the study of the small sphere limit for quasi local energies is not the only place where these geometric discrepancies are relevant. They are also present when studying small causal diamonds, as  was studied in \cite{Wang2} by Wang. The edge of a causal diamond can be thought in Minkowski space as the intersection of two light cones, a spacelike geodesic sphere emerging from the center of the diamond,  or as the light cut of one of the two cones intersecting. When considering it to be a geodesic sphere, the  Einstein tensor can be obtained by comparing  the area  of the edge (so the area of the geodesic sphere) in an arbitrary spacetime with the area of the edge in Minkowski spacetime. In \cite{Wang2}, this property was studied for the three definitions of diamonds, in higher dimensions and also in the vacuum case, obtaining different results in each case (not always proportional to the Einstein tensor) which of course diverge because of the geometric differences of the  edges.  
  \end{remark}

 \paragraph*{\emph{Acknowledgements.}} We would like to thank Jan Metzger and Claudio Paganini  for the   helpful discussions about this work and also thank Jinzhao Wang for the interesting  discussions about his results \cite{Wang2} and \cite{Wang}. This research is supported by the International Max Planck Research School for Mathematical and Physical Aspects of Gravitation, Cosmology and Quantum Field Theory.

 \appendix

\section{Some results on small surfaces}

We consider surfaces $\Sigma$ in a three dimensional Riemannian manifold $(M, g)$. If $p \in M$ and $\rho < R_p$, the injectivity radius of $(M, g)$ at $p$, we
can introduce Riemannian normal coordinates on a geodesic ball of radius $\rho$ around $p$, $B_\rho(p)$. On these coordinates, the metric can be expressed as 
\begin{equation}\label{normalcor}
    g_{ij}(r x)=  (\delta_{ij} + \sigma_{ij}(x r^2))
\end{equation}
where $\delta$ demotes the euclidean metric and  $\sigma_{ij}$ satisfies $ |\sigma_{ij}(x)| |x|^{-2}+|\partial \sigma_{ij}(x)| |x|^{-1}+|\partial^2 \sigma_{ij}(x)|  \leq \sigma_0 $. Where $\sigma_0 $ is a constant depending  on the maximum of $| \mathrm{Ric} |$, $|\nabla  \mathrm{Ric} |$ and $|\nabla^2  \mathrm{Ric}  |$ in $B_\rho(p)$.

In this context we have the following results  
\begin{lemma}[{\cite[Lemma 2.1]{ToMet}}]\label{comparison}
There exists a constant
$C$  depending only on $\rho$ and $\sigma_0$  such that for all surfaces $\Sigma \subset  B_r$ with $ r < \rho$, we have
\begin{equation}
    \begin{split}
        |\nu_\Sigma - \nu_\Sigma^E | \leq C|x|^2 \quad & \quad
        |d\mu -d \mu^E| \leq  C|x|^2\\
        |\nu -d \nu^E| \leq  C|x|^2 \quad & \quad
        |B - B^E| \leq  C(|x|+ |x|^2|B|)\\
        |R-R^E|\leq  Cr^2 R  \quad & \quad|R-R^E|\leq  Cr^2 R^E
    \end{split}
\end{equation}
Where $R := \sqrt{\frac{|\Sigma|}{4 \pi}}$ is the area radius of $\Sigma$  and the super index $E$ indicates that the quantity is evaluated with respect to the euclidean metric. In particular, the areas $|\Sigma|$ and $|\Sigma|^E$ are comparable.
\end{lemma}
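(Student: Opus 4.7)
The plan is to work entirely in the normal coordinate chart provided by the hypothesis, exploiting the uniform perturbation bound $g_{ij}(x) = \delta_{ij} + \sigma_{ij}(x)$ with $|\sigma_{ij}(x)| \leq \sigma_0 |x|^2$, $|\partial \sigma(x)| \leq \sigma_0 |x|$, and $|\partial^2 \sigma(x)| \leq \sigma_0$ on $B_\rho(p)$. Since $\Sigma \subset B_r$ with $r < \rho$, every point $x \in \Sigma$ satisfies $|x| \leq r$ and so the metric correction is uniformly $O(r^2)$. The six estimates in the lemma should follow one after another by plugging this pointwise asymptotic expansion into the coordinate formulas for each object, and using earlier estimates to bootstrap the later ones.

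First I would compare the normals. At every $x \in \Sigma$ the tangent plane $T_x\Sigma \subset \mathbb{R}^3$ is an intrinsically defined $2$-plane, independent of which metric one uses. Both $\nu$ and $\nu^E$ are unit vectors in the orthogonal complement, for $g$ and $\delta$ respectively. Writing $\nu = a \nu^E + w$ with $w \in T_x\Sigma$ and $a \in \mathbb{R}$, the orthogonality condition $g(\nu, e) = 0$ for $e \in T_x\Sigma$ reduces, using $g = \delta + \sigma$, to a linear equation for $w$ whose right-hand side is bounded by $\sigma_0 |x|^2$; hence $|w| \leq C|x|^2$, and the normalization $g(\nu,\nu) = 1$ fixes $|a - 1| \leq C|x|^2$. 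This gives the first estimate. The area element estimate is then immediate: in any local chart on $\Sigma$ one has $(g_\Sigma)_{\alpha\beta} = (\delta_\Sigma)_{\alpha\beta} + O(|x|^2)$, and expanding $\sqrt{\det g_\Sigma}/\sqrt{\det \delta_\Sigma}$ produces the factor $1 + O(|x|^2)$.

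For the second fundamental form I would write, in a local parametrization $X$ of $\Sigma$, the coordinate formula $B_{\alpha\beta} = g(\nabla_{\partial_\alpha X} \partial_\beta X, \nu) = \langle \partial^2_{\alpha\beta}X, \nu\rangle_\delta + \Gamma^k_{ij}(X)\partial_\alpha X^i\partial_\beta X^j\langle \partial_k, \nu\rangle_\delta + \sigma\text{-correction}$, and compare to $B^E_{\alpha\beta} = \langle \partial^2_{\alpha\beta}X, \nu^E\rangle_\delta$. The difference splits into a term containing the Christoffel symbols (which vanish at the origin of normal coordinates and satisfy $|\Gamma| \leq C|x|$), giving an additive contribution of order $|x|$, and a term of the form $\langle \partial^2_{\alpha\beta}X, \nu - \nu^E\rangle$, which by the first estimate is of order $|x|^2 |B^E|$. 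Using $|B^E| \leq |B| + |B^E - B|$ and absorbing the small coefficient on $|B^E - B|$ gives $|B - B^E| \leq C(|x| + |x|^2 |B|)$ with the desired form. Finally, the area-radius comparisons follow by integrating $|d\mu - d\mu^E| \leq C|x|^2 d\mu^E \leq Cr^2\, d\mu^E$ over $\Sigma$, which yields $\bigl||\Sigma| - |\Sigma|^E\bigr| \leq Cr^2 \min(|\Sigma|, |\Sigma|^E)$ and hence both forms of $|R - R^E| \leq Cr^2 R$.

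The main obstacle is the $B$-estimate, since it is not absolute but proportional to $|B|$ itself (there is no a priori curvature bound on $\Sigma$): one has to keep the constant in front of the $|B|$-absorption term smaller than $1$, which is why the bound $Cr^2$ on the metric perturbation together with $r < \rho$ is used in an essential way. The remaining issue is bookkeeping: verifying that every constant $C$ depends only on $\rho$ and $\sigma_0$ (equivalently on $|\mathrm{Ric}|$, $|\nabla \mathrm{Ric}|$, $|\nabla^2 \mathrm{Ric}|$ in $B_\rho(p)$) and never on intrinsic or extrinsic data of $\Sigma$ itself.
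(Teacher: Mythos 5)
Note first that the paper itself quotes this lemma from \cite{ToMet} without reproducing a proof, so there is no in-paper argument to compare against; what follows assesses your sketch on its own. Your overall strategy---pointwise comparison in the normal chart where $g_{ij}=\delta_{ij}+\sigma_{ij}$ with $|\sigma|\le\sigma_0|x|^2$, $|\partial\sigma|\le\sigma_0|x|$---is the standard and correct one, and it does handle the normal comparison, the area element, and the area-radius estimates cleanly (the decomposition $\nu=a\nu^E+w$ with $w\in T_x\Sigma$, then $\delta(w,e)=-\sigma(\nu,e)$ and normalization, is exactly right).

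There is, however, a gap in the second-fundamental-form step. You bound $\langle\partial^2_{\alpha\beta}X,\nu-\nu^E\rangle$ by $C|x|^2|B^E|$, but $\partial^2_{\alpha\beta}X$ decomposes (Euclidean-orthogonally along $\Sigma$) as $B^E_{\alpha\beta}\nu^E+\Gamma^{\Sigma,\gamma}_{\alpha\beta}\partial_\gamma X$, where $\Gamma^{\Sigma}$ are the Christoffel symbols of the \emph{induced} metric on $\Sigma$ in the chosen parametrization. You only account for the ambient Christoffel symbols of $g$ in normal coordinates; the tangential piece $\Gamma^{\Sigma,\gamma}_{\alpha\beta}\partial_\gamma X$ pairs with the tangential part $w$ of $\nu-\nu^E$ to give a contribution of size $|x|^2|\Gamma^{\Sigma}|$, and $|\Gamma^{\Sigma}|$ is not controlled by $|B|$; it depends on the parametrization and can be arbitrarily large. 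Two standard repairs are available: either compare the Weingarten maps $W=-\nabla\nu$ and $W^E=-\nabla^E\nu^E$ directly by differentiating the identity $\nu=a\nu^E+w$ along $\Sigma$ (this produces only $\partial\sigma$-terms of order $|x|$ and derivatives of $a,w$, which in turn involve $B^E$ with an $|x|^2$ prefactor, giving the claimed form after absorption), or observe that $|B-B^E|$ is a pointwise tensor norm and may therefore be evaluated at each $x$ in a parametrization that is normal for the induced Euclidean metric at $x$, so $\Gamma^{\Sigma}(x)=0$ and $\partial^2_{\alpha\beta}X(x)$ is purely normal with $|\partial^2_{\alpha\beta}X|\le|B^E|$. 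Either fix is short, but one of them must be made explicit. Relatedly, your sketch does not address the third displayed estimate (printed as $|\nu-d\nu^E|\le C|x|^2$, evidently a typo for a bound on the derivative of the normal, which the paper later uses in the proof of Theorem~\ref{uniq}); proving it requires exactly the differentiated version of the normal comparison discussed above, so it is natural to treat it together with the $B$-estimate.
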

In the context of the previous two lemmas, we have the following result that comes from \cite[Lemma 2.7]{ToMet} and \cite[Proposition II.1.3]{Nerzthe},   and which proofs come from the fact that the Michael-Simon-Sobolev inequality can be applied to our situation.
\begin{lemma}\label{Sobol}
For any orientable surface $\Sigma \subset B_\rho(p)$ (and $\rho$ sufficiently small), there exist a constant $C$ depending on $ \sigma_0$ and $\rho$ such that for all smooth function $f$ on $\Sigma$ we have 
\begin{equation}
  \left( \int_\Sigma f^2  d \mu\right)^{\frac{1}{2}} \leq C \int_\Sigma |\nabla f| + |H f| d\mu.
\end{equation}
Furthermore, via Hölder inequality, we have that for all $p\geq 1 $, it holds 
\begin{equation}\label{Sobol2}
     \left( \int_\Sigma f^{2p} d\mu \right)^{\frac{1}{p}} \leq C  p^2 |\text{supp} f|^\frac{1}{p} \int_\Sigma |\nabla f|^2 + |H f|^2 d\mu.  
\end{equation}
We also have that there exist a constant $c_S$ such that the Sobolev inequality, 
\begin{equation}
    ||f||_{L^2(\Sigma) } \leq c_s R^{-1}  ||f||_{W^{1,1}(\Sigma) }
\end{equation}
holds for any $ f\in \mathcal{C}^1 (\Sigma)$, where $R$ is the area radius  of $\Sigma$. From this Sobolev inequality it follows that
\begin{equation}\label{Sobolev2}
    ||f||_{L^\infty(\Sigma)} \leq 2^{\frac{2(p-1)}{p-2}} c_s R^{-\frac{2}{p}} ||f||_{W^{1,p}(\Sigma) } 
\end{equation}
for $p\in (2, \infty]$ and $ f \in W^{1,p}(\Sigma)$ and where the Sobolev norm is given by $||f||_{W^{1,p}(\Sigma) } = ||f||_{L^p(\Sigma)}+ R|| \nabla f||_{L^p(\Sigma)} $
\end{lemma}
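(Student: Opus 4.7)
The plan is to reduce all four inequalities to the Euclidean Michael–Simon–Sobolev inequality via the normal coordinate comparison in Lemma \ref{comparison}, and then to deduce the higher exponent and $L^{\infty}$ statements from the basic $L^{2}$–$W^{1,1}$ bound by standard Hölder/Moser bookkeeping.

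First I would handle the base inequality. Introduce Riemannian normal coordinates on $B_{\rho}(p)$, so that $g_{ij}=\delta_{ij}+\sigma_{ij}$ with $\sigma_{ij}$ controlled by $\sigma_{0}$, and regard $\Sigma\subset\mathbb{R}^{3}$ as a surface in Euclidean space. The classical Michael–Simon–Sobolev inequality in $\mathbb{R}^{3}$ gives
\begin{equation*}
\Bigl(\int_{\Sigma}f^{2}\,d\mu^{E}\Bigr)^{1/2}\leq C\int_{\Sigma}|\nabla^{E} f|+|H^{E}f|\,d\mu^{E}.
\end{equation*}
Now Lemma \ref{comparison} yields $d\mu=(1+O(|x|^{2}))d\mu^{E}$, $|H-H^{E}|\leq C(|x|+|x|^{2}|B|)$ and similarly for the intrinsic gradient. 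For $\rho$ chosen small enough in terms of $\sigma_{0}$, the error terms are absorbed into a constant $C=C(\sigma_{0},\rho)$, giving the stated inequality.

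Next I would obtain the $L^{2p}$ estimate by inserting $f^{p}$ into the base inequality: expanding $|\nabla f^{p}|=p\,f^{p-1}|\nabla f|$ and applying Hölder with exponents $(2,2)$ on $\int f^{p-1}|\nabla f|\,d\mu$ and $\int f^{p-1}|Hf|\,d\mu$, together with the factor $|\operatorname{supp} f|^{1/p}$ produced by Hölder on $\int_{\Sigma}f^{2p-2}\cdot 1\,d\mu$. The $p^{2}$ factor is the price of squaring the base inequality after applying $(a+b)^{2}\leq 2a^{2}+2b^{2}$. For the $L^{2}$–$W^{1,1}$ Sobolev inequality with the $R^{-1}$ factor, I would apply the base inequality to $f$ and use Hölder on $\int|Hf|\,d\mu\leq\bigl(\int H^{2}\,d\mu\bigr)^{1/2}\|f\|_{L^{2}}$; observing that $\int H^{2}\,d\mu$ is comparable to the Willmore energy and that by definition $R=\sqrt{|\Sigma|/4\pi}$, a standard rearrangement yields the claimed $R^{-1}\|f\|_{W^{1,1}}$ bound.

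Finally, the $L^{\infty}$ bound would follow from a Moser-type iteration using (\ref{Sobol2}) with a sequence of exponents $p_{k}=2^{k}$: at each step one controls $\|f\|_{L^{2p_{k+1}}}$ by $\|f\|_{W^{1,2p_{k}}}$, absorbs the geometric constants using the fact that $R$ and $|\Sigma|^{1/2}$ are comparable by Lemma \ref{comparison}, and sums the resulting geometric series of exponents to reach the stated endpoint version for $p\in(2,\infty]$. The main obstacle I expect is not any single step but keeping uniform control of the constants $C,c_{s}$ through the Moser iteration so that they depend only on $\sigma_{0}$ and $\rho$, independently of the particular surface; this requires verifying that the Michael–Simon–Sobolev constant is stable under the bi-Lipschitz comparison $g\leftrightarrow\delta$ and that the $|\text{supp} f|$ terms from iterated Hölder applications do not blow up, which in turn uses the uniform area comparability $|\Sigma|\asymp|\Sigma|^{E}$ from Lemma \ref{comparison}.
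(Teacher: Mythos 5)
The paper does not actually prove this lemma; it is quoted from \cite[Lemma 2.7]{ToMet} and \cite[Proposition II.1.3]{Nerzthe}, with the remark that everything rests on transferring the Euclidean Michael--Simon--Sobolev inequality via the metric comparison of Lemma \ref{comparison}. Your treatment of the first two inequalities follows exactly that route and is correct: the base inequality is Michael--Simon plus absorption of the $O(|x|^2)$ errors for $\rho$ small, and (\ref{Sobol2}) follows by inserting $|f|^p$, Cauchy--Schwarz on $\int |f|^{p-1}|\nabla f|$ and $\int |f|^{p-1}|Hf|$, Hölder against $\mathbf{1}_{\operatorname{supp} f}$ to peel off $|\operatorname{supp} f|^{1/(2p)}$, and squaring (whence the $p^2$).

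There are, however, two genuine gaps in the second half. For the $W^{1,1}$ Sobolev inequality your step $\int|Hf|\,d\mu\leq(\int H^2)^{1/2}\|f\|_{L^2}$ followed by ``rearrangement'' cannot work: absorbing $\|f\|_{L^2}$ into the left-hand side requires $C(\int H^2)^{1/2}<1$, and already for a round sphere $(\int H^2)^{1/2}=4\sqrt{\pi}$, so the product with the Michael--Simon constant is not small; moreover, even if absorption succeeded it would yield $\|f\|_{L^2}\leq C\|\nabla f\|_{L^1}$, which is false for constant $f$. The correct mechanism is a pointwise bound $|H|\leq C R^{-1}$ on the class of nearly round surfaces to which the lemma is applied (available from $\|H-2/R\|_{L^\infty}\leq C|\Sigma|^{1/2}$ in the setting of Theorem \ref{muller} and the estimates (\ref{estimsho})), which turns $\int|Hf|$ directly into $CR^{-1}\|f\|_{L^1}$ and produces the stated norm $\|f\|_{L^1}+R\|\nabla f\|_{L^1}$. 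For the $L^\infty$ bound, the iteration you describe controls $\|f\|_{L^{2p_{k+1}}}$ by $\|f\|_{W^{1,2p_k}}$, which would require gradient bounds in $L^{2p_k}$ for every $k$; only $\|\nabla f\|_{L^p}$ for the single given $p$ is available. The standard Stampacchia/Moser argument instead applies the $L^2$--$W^{1,1}$ inequality to increasing powers $f^{\gamma_k}$, splits $\int|\nabla f^{\gamma_k}|\leq\gamma_k\|f^{\gamma_k-1}\|_{L^{p/(p-1)}}\|\nabla f\|_{L^p}$ so that the fixed quantity $\|f\|_{W^{1,p}}$ appears at every stage, and the geometric series of exponents is what produces the explicit constant $2^{2(p-1)/(p-2)}$; this also explains why the statement degenerates as $p\downarrow 2$.
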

\begin{lemma}[{\cite[Lemma 2.5]{ToMet}}]\label{Secofun}
There exists $0 < \rho_0 < \rho$ and a constant $C$ depending only on $\rho$ and $\sigma_0$ such that for all surfaces $\Sigma \subset B_r$ with $r < \rho_0$, we have
$$||\mathring{B}^E||^2_{L^2(\Sigma, \delta)} < C  ||\mathring{B}||^2_{L^2(\Sigma, g)} +Cr^4 ||H ||^2_{L^2(\Sigma, g)}$$.
\end{lemma}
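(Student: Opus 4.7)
The plan is to reduce to a pointwise estimate in Riemannian normal coordinates centered at $p$, then integrate, and finally absorb two residual error terms. First I would fix normal coordinates so that (\ref{normalcor}) holds and, by Lemma \ref{comparison}, one has pointwise on any $\Sigma \subset B_r$ the bounds $|B-B^E|\le C(|x|+|x|^2|B|)$, $|\nu-\nu^E|+|g_\Sigma - g^E_\Sigma|\le C|x|^2$ and $|d\mu-d\mu^E|\le C|x|^2\, d\mu$. Tracing the first of these against the (comparable) induced metrics yields the analogous estimate $|H-H^E|\le C(|x|+|x|^2|B|)$.

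Next, writing
\[
\mathring B^E-\mathring B=(B^E-B)-\tfrac{1}{2}(H^E-H)\,g^E_\Sigma-\tfrac{1}{2}H\,(g^E_\Sigma-g_\Sigma),
\]
the previous estimates combine to give pointwise $|\mathring B^E-\mathring B|^2 \le C\bigl(|x|^2 + |x|^4|B|^2 + |x|^4 H^2\bigr)$. Using the elementary splitting $|\mathring B^E|^2_\delta\le 2|\mathring B|^2_\delta+2|\mathring B^E-\mathring B|^2_\delta$ together with the comparability of the $\delta$- and $g$-norms of any tensor and of the two area forms (each losing only a factor $1+C|x|^2$), integration over $\Sigma\subset B_r$ leads to
\[
\int_\Sigma |\mathring B^E|^2\, d\mu^E\le C\int_\Sigma |\mathring B|^2\, d\mu+Cr^2|\Sigma|+Cr^4\int_\Sigma |B|^2\, d\mu+Cr^4\int_\Sigma H^2\, d\mu.
\]
Substituting the dimension-two identity $|B|^2=|\mathring B|^2+\tfrac{1}{2}H^2$ and choosing $\rho_0$ small enough that the term $Cr^4\int|\mathring B|^2$ is absorbed into $C\int|\mathring B|^2$, only the contributions $C\|\mathring B\|_{L^2(\Sigma,g)}^2$, $Cr^2|\Sigma|$ and $Cr^4\|H\|^2_{L^2(\Sigma,g)}$ remain on the right.

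The main obstacle is then disposing of the leftover $Cr^2|\Sigma|$. I would handle it using the Michael--Simon--Sobolev inequality (Lemma \ref{Sobol}) applied to the constant function $f\equiv 1$, which on the one hand yields a universal lower bound $\|H\|_{L^2(\Sigma,g)}\ge c>0$ and, when paired with a monotonicity-type area bound of the form $|\Sigma|\le Cr^2\|H\|^2_{L^2(\Sigma,g)}$ available for closed surfaces contained in a ball of radius $r$, upgrades $Cr^2|\Sigma|$ to $Cr^4\|H\|^2_{L^2(\Sigma,g)}$. This absorbs cleanly into the last term and completes the stated inequality. The remaining steps are essentially bookkeeping in the normal chart; the delicate point is this final absorption, since for generic (possibly highly folded) surfaces there is no unconditional bound $|\Sigma|\le Cr^2$, and one is forced to trade area against $L^2$ mean curvature on small scales to convert the residual $r^2|\Sigma|$ into a constant multiple of $r^4\|H\|^2_{L^2}$.
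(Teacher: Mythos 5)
Since the paper cites this result from \cite{ToMet} without reproducing a proof, I cannot compare your argument against the source, but your overall strategy is sound: decompose $\mathring B^E-\mathring B$ using the identities $\mathring B = B - \tfrac12 H g_\Sigma$ and $\mathring B^E = B^E - \tfrac12 H^E g^E_\Sigma$, feed in the pointwise comparison estimates of Lemma~\ref{comparison}, integrate, substitute $|B|^2 = |\mathring B|^2 + \tfrac12 H^2$, and absorb the small terms for $r<\rho_0$. That part is correct bookkeeping.

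The one step that is stated imprecisely is the disposal of the residual $Cr^2|\Sigma|$. You invoke Michael--Simon--Sobolev with $f\equiv 1$, but that only yields $|\Sigma|^{1/2}\le C\int_\Sigma|H|\,d\mu$, and after Cauchy--Schwarz this gives the scale-free lower bound $\|H\|_{L^2(\Sigma,g)}\ge c$, which by itself does nothing to control $|\Sigma|$: a lower bound on $\|H\|_{L^2}$ cannot convert $r^2|\Sigma|$ into $r^4\|H\|^2_{L^2}$. The estimate you actually need, $|\Sigma|\le Cr^2\|H\|^2_{L^2(\Sigma)}$, is a genuine additional fact and does not come from Michael--Simon at all; it follows from the first variation of area with the Euclidean position vector field $x$: for a closed surface $\Sigma\subset B_r$ in $\mathbb{R}^3$ one has $2|\Sigma|_E = -\int_\Sigma H^E\langle x,\nu^E\rangle\,d\mu^E \le r\int_\Sigma |H^E|\,d\mu^E \le r\,|\Sigma|_E^{1/2}\,\|H^E\|_{L^2(\Sigma,\delta)}$, whence $|\Sigma|_E\le \tfrac{r^2}{4}\|H^E\|^2_{L^2(\Sigma,\delta)}$, and then Lemma~\ref{comparison} transfers this to the $g$-quantities up to a factor $1+C\rho^2$. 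Once you state this and drop the unneeded $\|H\|_{L^2}\ge c$ detour, the absorption of $Cr^2|\Sigma|$ into $Cr^4\|H\|^2_{L^2(\Sigma,g)}$ goes through, and the constant indeed only depends on $\rho$ and $\sigma_0$. With that correction the proposal is a complete proof.
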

We state the following result of De Lellis and Müller in the way how was used in \cite{ToMet}, a scaled version.
\begin{theorem}[{\cite[Theorem 1.1]{Lemu1}, \cite[Theorem 1.2]{Lemu2}}]\label{muller} There exists a universal constant $C$ with the following properties. Assume that $\Sigma \subset \mathbb{R}^3$ is a surface with $||\mathring{B}^E||^2_{L^2(\Sigma, \delta)}< 8 \pi$. Let
$R^E := \sqrt{\frac{|\Sigma|^E}{4 \pi}}$ be the Euclidean area radius of $\Sigma$ and $a^E := |\Sigma|_E^{-1} \int_\Sigma x d \mu^E$
be the Euclidean center of gravity. Then there exists a conformal map $\phi: S:= S_{R^E}(a^E)\rightarrow \Sigma \subset \mathbb{R}^3$ with the following properties. Let $\gamma^S$
be
the standard metric on $S$, $N$ the Euclidean normal vector field and $h$ the
conformal factor, that is $\phi^* \delta_{|\Sigma} = h^2 \gamma^S$. Then the following estimates hold
\begin{equation}
 \begin{split}
    ||H^E -2/R^E||_{L^2(\Sigma, \delta)} &\leq C ||\mathring{B}^E||^2_{L(\Sigma, \delta)} \\
    ||\phi -(a^E + id_S) ||_{L^\infty(S)} &\leq  C \,R^E ||\mathring{B}^E||_{L^2(\Sigma, \delta)},\\
    ||h^2-1 ||_{L^\infty(S)} &\leq  C \,R^E ||\mathring{B}^E||_{L^2(\Sigma, \delta)}\\
    ||\nu^E \circ \phi  -N ||_{L^2(S,\delta)} &\leq  C \,R^E ||\mathring{B}^E||_{L^2(\Sigma, \delta)}
 \end{split}   
\end{equation}
\end{theorem}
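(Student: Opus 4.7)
The theorem is De Lellis and Müller's rigidity result for nearly-umbilical surfaces, and my plan is to follow their two-step strategy: first establish that the smallness of $\|\mathring{B}^E\|_{L^2}$ forces $\Sigma$ to be a topological sphere carrying a globally defined conformal parametrization from the round sphere, and then upgrade this qualitative information to the four quantitative estimates by exploiting the compensated-compactness (Wente/Hélein) structure of the Liouville equation satisfied by the conformal factor.

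For the topological step, I would combine the pointwise identity $\tfrac{1}{4}(H^E)^2 - K^E = \tfrac{1}{2}|\mathring{B}^E|^2$ with Willmore's inequality $\int_\Sigma (H^E)^2/4 \geq 4\pi$ and Gauss--Bonnet $\int_\Sigma K^E = 4\pi(1-g)$ to force $g=0$, so that uniformization provides a conformal diffeomorphism $\phi : (S^2, \gamma^{S^2}) \to \Sigma$, unique up to precomposition with the $6$-parameter Möbius group. I would exhaust this freedom in two stages: use the three translational parameters to enforce the balanced Gauss-map condition $\int_{S^2} \nu^E\circ\phi\,d\mu^{S^2} = 0$, and use the remaining dilation/rotation parameters, together with $|\Sigma|^E$ and $a^E$, to identify the target round sphere with $S = S_{R^E}(a^E)$.

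For the quantitative estimates, writing $\phi^*\delta = h^2\gamma^{S^2}$ and using the conformal transformation of Gauss curvature in dimension two gives $-\Delta_{\gamma^{S^2}}\log h + 1 = K^E h^2$, and inserting $K^E = (H^E)^2/4 - |\mathring{B}^E|^2/2$ exposes a Jacobian/null-Lagrangian structure on the right-hand side. With the kernel eliminated by the Möbius normalization, Wente-type estimates applied in the Coulomb frame along $\phi$ upgrade the $L^1$-information to an $L^\infty$ bound $\|\log h\|_{L^\infty(S^2)} \leq C R^E\|\mathring{B}^E\|_{L^2}$, yielding the third estimate of the theorem. The mean-curvature bound follows from the Gauss--Bonnet identity $\int_\Sigma|\mathring{B}^E|^2 = \tfrac{1}{2}\int_\Sigma(H^E - 2/R^E)^2 + \tfrac{2}{R^E}\int_\Sigma (H^E-2/R^E)$: the linear term is controlled via Cauchy--Schwarz combined with the Minkowski-type identity $\int_\Sigma H^E\nu^E = 0$ and the normalization $\int\nu^E\circ\phi = 0$, producing the quadratic right-hand side. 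The Gauss-map bound comes from integrating the Codazzi equation in the Coulomb frame, controlling $|\nabla(\nu^E\circ\phi - N)|$ pointwise by $|\mathring{B}^E| + |\nabla\log h|$, and invoking Poincaré on $S^2$. Finally $\|\phi - (a^E + \mathrm{id}_S)\|_{L^\infty}$ is estimated by integrating $d\phi$ along great circles and comparing with the analogous integral for $\mathrm{id}_S$, using the already-established bounds on $h$ and $\nu^E\circ\phi$ together with the center-of-mass condition defining $a^E$.

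The genuine difficulty is the $L^\infty$ estimate on $\log h$ from merely $L^2$-smallness of $\mathring{B}^E$: two-dimensional elliptic regularity with only an $L^1$ right-hand side fails to embed into $L^\infty$ by a logarithm, and a naive bootstrap loses the dependence on the small parameter. The rescue is that the right-hand side of the Liouville equation carries a Jacobian structure coming from $(H^E)^2$, so Wente's improved regularity (values in the Hardy space $\mathcal{H}^1$, elliptic regularity into $C^0$) applies; organizing everything so that this structure is visible requires Hélein's Coulomb-gauge moving frame, and the Möbius normalization chosen above is precisely what prevents concentration/bubbling and makes the estimate uniform. Once the $L^\infty$ bound on $\log h$ is in hand, the remaining three estimates are routine integrations of geometric identities in the parametrization just constructed.
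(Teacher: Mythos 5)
The paper does not prove this theorem; it is stated as a black-box citation of De Lellis--M\"uller (with a scaling adjustment matching how Lamm--Metzger use it), so there is no internal proof to compare your sketch against. Evaluated as a reconstruction of the De Lellis--M\"uller argument, your outline does capture the correct global structure: genus zero via $\int_\Sigma |\mathring{B}^E|^2 = \tfrac12\int_\Sigma (H^E)^2 - 8\pi(1-g)$ together with Willmore's inequality, uniformization with a Hersch/M\"obius-type balancing to fix the conformal group, the Liouville equation for $\log h$, and the Wente/H\'elein compensated-compactness machinery to convert $L^2$-smallness of $\mathring{B}^E$ into a $C^0$ bound on the conformal factor. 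These are indeed the ingredients of the two source papers, and you correctly identify the heart of the matter (the Jacobian structure hiding in the $(H^E)^2$ term of the Gauss curvature and the role of the M\"obius normalization in preventing concentration).

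There is, however, a genuine gap in your route to the first estimate. From the identity
$\int_\Sigma |\mathring{B}^E|^2 = \tfrac12\int_\Sigma (H^E - 2/R^E)^2 + \tfrac{2}{R^E}\int_\Sigma(H^E - 2/R^E)$,
Cauchy--Schwarz on the linear term only yields the quadratic inequality
$\tfrac12 x^2 - 4\sqrt{\pi}\,x - \lVert\mathring{B}^E\rVert_{L^2}^2 \le 0$ with $x = \lVert H^E - 2/R^E\rVert_{L^2}$,
whose positive root is $\mathcal{O}(1)$, not $\mathcal{O}(\lVert\mathring{B}^E\rVert_{L^2})$. The argument would close if one could invoke $\int_\Sigma H^E \ge 8\pi R^E$ (the sharp Minkowski inequality) to discard the linear term, but that inequality is available only under convexity, and neither the Minkowski integral identity $\int H\langle x-a,\nu\rangle = 2|\Sigma|$ nor $\int\nu = 0$ supplies it. De Lellis--M\"uller in fact prove the $L^2$ estimate on $H^E - 2/R^E$ \emph{before} and \emph{independently of} the conformal parametrization, through a Gauss-map/Codazzi argument on genus-zero surfaces; your step 4 would need to be replaced by that mechanism rather than deduced from the already-built conformal chart.
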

Finally, we state \cite[Lemma 3.1 and Lemma 3.2]{Janme} in our context.
\begin{lemma}\label{refine1}
Let $\Sigma \subset M$ be a surface with extrinsic diameter $d$ such that $2d$ is smaller than the injectivity radius of $M$. Then there exists a point $p_0 \in M$ with $\text{diam} (p_0, \Sigma) \leq d$ and such that in normal coordinates $\psi$ centered at $p_0$ we have that
\begin{equation}
\begin{split}
    a=\frac{1}{|\Sigma|} \int_{\psi(\Sigma)} y d\mu =0\quad \text{and} \quad
     |a_E|_E=\left|\frac{1}{|\Sigma|_E} \int_{\psi(\Sigma)} y d\mu_E \right|_E \leq C d^3
    \end{split}
\end{equation}
where $y$ denotes the position vector on $\psi(\Sigma) $. 
\end{lemma}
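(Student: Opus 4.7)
The strategy is to recast the first assertion as a fixed-point problem for a Riemannian center-of-mass map and then deduce the Euclidean bound from the metric expansion. Fix a reference point $q \in \Sigma$; by the diameter assumption and the injectivity-radius hypothesis, for every $p$ in the closed geodesic ball $\overline{B_{d}(q)}$ the whole surface $\Sigma$ lies inside $B_{2d}(p)$, which is strictly contained in the domain of the normal chart $\psi_p$ at $p$. On $\overline{B_d(q)}$ define the continuous map
\begin{equation*}
    \Phi(p) \definedas \frac{1}{|\Sigma|}\int_{\psi_p(\Sigma)} y\, d\mu(y) \in T_pM \cong \mathbb{R}^{3},
\end{equation*}
and the associated self-map $T(p)\definedas \exp_{p}(\Phi(p))$. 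A fixed point of $T$ is exactly a point $p_0$ at which the $g$-weighted normal-coordinate center of mass $a$ of $\Sigma$ vanishes, and any such $p_0$ automatically satisfies $\diam(p_0,\Sigma)\le d$ up to enlarging the initial ball slightly, since $|\Phi(p)|\le \diam\Sigma$ on $\overline{B_d(q)}$.

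To produce $p_0$ I would run the obvious iteration $p_{n+1}=T(p_n)$ with $p_1=q$. The key estimate is that changing base point from $p$ to $T(p)$ shifts the center of mass essentially by $-\Phi(p)$, with error controlled by the metric expansion $g_{ij}=\delta_{ij}+O(|y|^{2})$ of (\ref{normalcor}): more precisely, using Lemma \ref{comparison} and the identity $\exp_{T(p)}^{-1}(x)=\exp_p^{-1}(x)-\Phi(p)+O(d^{2}|\Phi(p)|)$ valid uniformly for $x\in\Sigma$, one obtains
\begin{equation*}
    |\Phi(T(p))|\le C d^{2}\,|\Phi(p)|,
\end{equation*}
so for $d$ sufficiently small the iteration contracts to a unique $p_0 \in \overline{B_{2d}(q)}$ with $\Phi(p_0)=0$. (Alternatively, a Brouwer-type argument works: on the boundary of a sufficiently large ball the vector field $\Phi$ points inward, forcing an interior zero.) The main obstacle is precisely this change-of-basepoint estimate; it requires expanding the transition between two nearby normal charts to second order and exploiting the fact that at leading order the Riemannian center of mass transforms by translation, so that the $O(d^{2})$ factor in the contraction constant comes entirely from the curvature correction $\sigma_{ij}$.

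For the second assertion, with $p_0$ as above and $\psi=\psi_{p_0}$, write
\begin{equation*}
    a_E = \frac{1}{|\Sigma|_E}\int_{\psi(\Sigma)} y\,d\mu_E
    = \frac{1}{|\Sigma|_E}\int_{\psi(\Sigma)} y\,(d\mu_E-d\mu)
      + \left(\frac{1}{|\Sigma|_E}-\frac{1}{|\Sigma|}\right)\int_{\psi(\Sigma)} y\,d\mu.
\end{equation*}
The second term vanishes because $\int y\,d\mu=|\Sigma|\,a=0$ by the construction of $p_0$. For the first term, the normal-coordinate expansion (\ref{normalcor}) together with Lemma \ref{comparison} gives $|d\mu-d\mu_E|\le C|y|^{2}\,d\mu_E$ on $\psi(\Sigma)$; combined with $|y|\le 2d$ and the comparability of $|\Sigma|$ and $|\Sigma|_E$, this yields
\begin{equation*}
    |a_E|_E \le \frac{C}{|\Sigma|_E}\int_{\psi(\Sigma)} |y|^{3}\,d\mu_E \le C d^{3},
\end{equation*}
which is the claimed bound. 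Once the fixed point exists, this last step is a straightforward metric-comparison computation; the only nontrivial input is that $a$ vanishes exactly, which is what makes the leading $O(d)$ contribution to $a_E$ cancel and leaves only the cubic curvature error.
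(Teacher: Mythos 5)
First, a point of reference: the paper does not actually prove this lemma — it is quoted from \cite[Lemmas 3.1 and 3.2]{Janme} in the appendix — so there is no in-paper argument to compare against. Your proposal is the classical Karcher center-of-mass construction, and the cited source establishes the same statement by essentially the same mechanism (locating the zero of the vector field $p\mapsto\int_\Sigma\exp_p^{-1}(x)\,d\mu$, equivalently the critical point of $p\mapsto\int_\Sigma \mathrm{dist}(p,x)^2\,d\mu$), so your fixed-point iteration is a legitimate variant rather than a genuinely different route. The second half of your argument — splitting $a_E$ so that the exact vanishing of $a$ annihilates everything except $\int y\,(d\mu_E-d\mu)$, then using $|d\mu-d\mu_E|\le C|y|^2\,d\mu_E$, $|y|\le 2d$ and the comparability of $|\Sigma|$ and $|\Sigma|_E$ from Lemma \ref{comparison} — is complete and correct.

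Three points in the first half need attention. (1) The contraction $|\Phi(T(p))|\le Cd^2|\Phi(p)|$ only closes when $Cd^2<1$, with $C$ controlled by the curvature on $B_{2d}(q)$; this is an additional smallness requirement beyond the stated hypothesis $2d<R_{\mathrm{inj}}$. It is harmless in context (the constant in the conclusion already depends on the same curvature bounds, and the lemma is only invoked for small surfaces), but it should be stated. (2) The change-of-basepoint expansion $\exp_{T(p)}^{-1}(x)=\exp_p^{-1}(x)-\Phi(p)+\mathcal{O}(d^2|\Phi(p)|)$ is the entire analytic content of the existence argument, and you assert it rather than prove it. It does hold — it is Karcher's Hessian comparison for $\tfrac12\mathrm{dist}(\cdot,x)^2$, i.e.\ a Jacobi-field estimate giving $\nabla\exp_{(\cdot)}^{-1}(x)=-\mathrm{Id}+\mathcal{O}(|\mathrm{Rm}|\,d^2)$ along the geodesic from $p$ to $T(p)$ — but a complete write-up must include it; identifying it as "the main obstacle" is not the same as overcoming it. (3) The inequality $|\Phi(p)|\le\diam\Sigma$ does not by itself yield $\diam(p_0,\Sigma)\le d$: what is needed is that once $a=0$ one has $|\psi_{p_0}(x)|=\bigl|\tfrac{1}{|\Sigma|}\int(\psi_{p_0}(x)-\psi_{p_0}(z))\,d\mu(z)\bigr|\le\sup_z|\psi_{p_0}(x)-\psi_{p_0}(z)|$, and after comparing Euclidean with geodesic distance this gives $\sup_x\mathrm{dist}(p_0,x)\le d(1+Cd^2)$, so the clean bound $\le d$ requires either this extra step or absorbing the $(1+Cd^2)$ factor. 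None of these is fatal, but (2) is where the real work of the lemma lives and it is currently a placeholder.
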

\begin{lemma}\label{refine2}
There exist constants $C$ and $a_0 \in (0, \infty)$ such that for every closed smooth surface $\Sigma \subset M$ with $|\Sigma| \leq a_0$ and $||\mathring{B}||^2_{L^2(\Sigma)} \leq a_0$, there exist a point $p_0 \in M$, normal coordinates $\psi : B_\rho (p_0) \rightarrow B_\rho (0)\subset \mathbb{R}^3$ and in these coordinates we have that 
\begin{equation}
    ||\frac{y}{R} -\nu ||_{L^2(\Sigma)} \leq C(R^3 +R ||\mathring{B}||_{L^2(\Sigma)} )
\end{equation}
and 
\begin{equation}
    ||\text{dist}(p_0, \cdot ) -R ||_{L^\infty(\Sigma)} \leq C(R^3 +R ||\mathring{B}||_{L^2(\Sigma)} )
\end{equation}
where $R$ denotes the area radius of $\Sigma$.
\end{lemma}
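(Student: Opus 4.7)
The plan is to reduce the Riemannian statement to a quantitative Euclidean statement via Lemma \ref{Secofun} and Lemma \ref{comparison}, and then to invoke the De Lellis-Müller theorem (Theorem \ref{muller}) to control the Euclidean parametrization. For $a_0$ sufficiently small, Lemma \ref{Secofun} together with the fact that $\int H^2 \approx 16\pi$ on nearly round spheres yields
\begin{equation*}
||\mathring{B}^E||^2_{L^2(\Sigma,\delta)} \leq C||\mathring{B}||^2_{L^2(\Sigma,g)} + CR^4||H||^2_{L^2(\Sigma,g)} < 8\pi,
\end{equation*}
so the Willmore smallness hypothesis of Theorem \ref{muller} is satisfied.

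First, I would invoke Lemma \ref{refine1} to pick a basepoint $p_0 \in M$ and normal coordinates $\psi$ in which the Riemannian first moment vanishes while the Euclidean first moment $a^E$ satisfies $|a^E|_E \leq Cd^3 \leq CR^3$ (using Lemma \ref{comparison} to bound the extrinsic diameter $d$ in terms of the area radius $R$). Applying Theorem \ref{muller} to $\psi(\Sigma) \subset \mathbb{R}^3$ then produces a conformal parametrization $\phi : S_{R^E}(a^E) \to \psi(\Sigma)$ with $||\phi - (a^E + \mathrm{id}_S)||_{L^\infty(S)} \leq CR||\mathring{B}^E||_{L^2}$, conformal factor satisfying $||h^2-1||_{L^\infty(S)} \leq CR||\mathring{B}^E||_{L^2}$, and $||\nu^E \circ \phi - N||_{L^2(S,\delta)} \leq CR||\mathring{B}^E||_{L^2}$, where $N$ is the Euclidean outward normal to the round sphere $S = S_{R^E}(a^E)$.

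For the $L^\infty$ bound on $\mathrm{dist}(p_0,\cdot) - R$, I use that in normal coordinates $\mathrm{dist}(p_0, q) = |y(q)|_E$. Writing $q = \phi(s)$ with $s - a^E = R^E N(s)$, the reverse triangle inequality gives $||y(q)|_E - R^E| \leq |a^E|_E + ||\phi - (a^E + \mathrm{id}_S)||_{L^\infty(S)} \leq CR^3 + CR||\mathring{B}^E||_{L^2}$. Combining with $|R - R^E| \leq CR^3$ from Lemma \ref{comparison} and transferring $\mathring{B}^E$ back to $\mathring{B}$ via Lemma \ref{Secofun} yields the claimed $L^\infty$ estimate.

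For the $L^2$ estimate, I would split
\begin{equation*}
\frac{y}{R} - \nu = \Bigl(\frac{y}{R} - \frac{y}{R^E}\Bigr) + \Bigl(\frac{y}{R^E} - N \circ \phi^{-1}\Bigr) + (N \circ \phi^{-1} - \nu^E) + (\nu^E - \nu)
\end{equation*}
and estimate each piece separately. The first term is at most $|y|\,|R-R^E|/(R R^E) \lesssim R^2$ pointwise, so $\lesssim R^3$ in $L^2(\Sigma)$; the second, after pulling back to $S$ via $\phi$, reduces to $a^E/R^E$ plus $(\phi - (a^E + \mathrm{id}_S))/R^E$, each contributing at most $C(R^3 + R||\mathring{B}^E||_{L^2})$ in $L^2$ once multiplied by $|\Sigma|^{1/2} \lesssim R$; the third is precisely the De Lellis-Müller normal term, bounded by $CR||\mathring{B}^E||_{L^2}$ after comparing $L^2$-norms on $S$ and $\Sigma$ using the almost-isometric conformal factor; and the fourth is $\leq CR^3$ by Lemma \ref{comparison}. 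Summing and applying Lemma \ref{Secofun} produces $C(R^3 + R||\mathring{B}||_{L^2})$. The main obstacle is the careful interlocking of Euclidean estimates with Riemannian perturbation terms, and in particular calibrating $a_0$ so that the strict $8\pi$ threshold needed for Theorem \ref{muller} persists after the transfer from $\mathring{B}$ to $\mathring{B}^E$.
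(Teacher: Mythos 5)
The paper does not supply a proof of this lemma: both Lemma \ref{refine1} and Lemma \ref{refine2} are stated as direct imports of \cite[Lemma 3.1 and Lemma 3.2]{Janme}, so there is no in-paper argument to compare yours against. Assessing your reconstruction on its own merits: the overall route --- reduce to a Euclidean statement via Lemmas \ref{comparison} and \ref{Secofun}, fix a good centre via Lemma \ref{refine1}, feed the De Lellis--M\"uller Theorem \ref{muller}, and reassemble --- is the natural one, and the four-term splitting for the $L^2$ bound together with the triangle-inequality argument for the $L^\infty$ bound do close once the hypotheses of the auxiliary lemmas are in force.

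The gap is in putting those hypotheses in force; as written there is a circularity. You invoke Lemma \ref{comparison} to ``bound the extrinsic diameter $d$ in terms of the area radius $R$,'' but Lemma \ref{comparison} does not do this: it compares geometric quantities on a surface already placed inside a small geodesic ball and gives no control on how small that ball must be. That control is precisely what Lemma \ref{Secofun} (whose error term is $r^4\|H\|_{L^2}^2$ with $\Sigma\subset B_r$) and Lemma \ref{refine1} (stated for surfaces of extrinsic diameter $d$ with $2d$ below the injectivity radius) require before you can use them, and it is also what lets you run Theorem \ref{muller} to get the $L^\infty$ estimate afterwards. You need a separate a priori diameter estimate --- for instance Simon's inequality $(\operatorname{diam}\Sigma)^2\lesssim |\Sigma|\int_\Sigma H^2\,d\mu$, transferred to the Riemannian setting --- to get $d\lesssim R$ from $|\Sigma|\le a_0$ and $\|\mathring B\|_{L^2}^2\le a_0$. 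Similarly, the assertion ``$\int H^2\approx 16\pi$ on nearly round spheres'' presupposes the roundness you are trying to establish; it should instead be derived from Gauss--Bonnet and the Gauss equation,
\begin{equation*}
\tfrac14\int_\Sigma H^2\,d\mu \;=\; 2\pi\chi(\Sigma) + \tfrac12\|\mathring B\|_{L^2(\Sigma)}^2 - \int_\Sigma K_M\,d\mu,
\end{equation*}
combined with an argument that $\Sigma$ is a topological sphere under the smallness hypotheses. Repairing the order --- diameter bound first, then topology and the $\int H^2$ control, then Lemma \ref{Secofun} for the $8\pi$ threshold, then Lemma \ref{refine1} and Theorem \ref{muller} --- the remainder of your proof goes through.
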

Finally, we state the following useful integrals. 
\begin{lemma}\label{integrals}
The components of a point in the sphere $\mathbb{S}^n$ satisfy
\begin{equation*}
        \begin{split}
            \int_{\mathbb{S}^n} x^i x^j d\mu =  \frac{|\mathbb{S}^n|}{n+1} \delta_{ij},
        \end{split}
    \end{equation*}
    \begin{equation*}
        \begin{split}
            \int_{\mathbb{S}^n} x^i x^j x^k x^l d\mu =  \frac{|\mathbb{S}^n|}{(n+1)(n+3)} (\delta_{ij} \delta_{kl} + \delta_{ik} \delta_{jl} + \delta_{il} \delta_{jk}),
        \end{split}
    \end{equation*}
and 
\begin{equation*}
        \begin{split}
            \int_{\mathbb{S}^n} x^i x^j x^k x^lx^p x^q d\mu =  \frac{|\mathbb{S}^n|}{(n+1)(n+3)(n+5)}&  ( \delta_{ij} \delta_{kl} \delta_{pq} + \delta_{ij} \delta_{kp} \delta_{lq}+ \delta_{ij} \delta_{kq} \delta_{lp}\\
&+ \delta_{ik} \delta_{jl} \delta_{pq}+ \delta_{ik} \delta_{jp} \delta_{lq} + \delta_{ik} \delta_{jq} \delta_{lp}\\
&+ \delta_{il} \delta_{jk} \delta_{pq} + \delta_{il} \delta_{jp} \delta_{kq} + \delta_{il} \delta_{jq} \delta_{kp}\\
&+ \delta_{ip} \delta_{jk} \delta_{lq} + \delta_{ip }\delta_{jl} \delta_{kq} + \delta_{ip} \delta_{jq} \delta_{kl}\\
&+ \delta_{iq} \delta_{jk} \delta_{lp} + \delta_{iq} \delta_{jl} \delta_{kp} + \delta_{iq} \delta_{jp} \delta_{kl} )\\
        \end{split}
    \end{equation*}
\end{lemma}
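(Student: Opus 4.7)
The common idea is that each of the three integrals defines a totally symmetric tensor on $\mathbb{R}^{n+1}$ that is invariant under the natural action of $O(n+1)$, since both $\mathbb{S}^n$ and its standard surface measure $d\mu$ are $O(n+1)$-invariant. A classical fact from invariant theory states that every symmetric $O(n+1)$-invariant tensor is a polynomial in the Kronecker delta; for even ranks $2$, $4$, $6$ the corresponding spaces of symmetric isotropic tensors are one-dimensional, spanned respectively by $\delta_{ij}$, by $\delta_{ij}\delta_{kl}+\delta_{ik}\delta_{jl}+\delta_{il}\delta_{jk}$, and by the totally symmetric sum of the $15$ distinct pair-matchings of six indices written on the right-hand side of the lemma. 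Hence each integral equals a single scalar multiple of the tensor that appears, and the proof reduces to identifying that scalar.

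\textbf{Determining the constants.} I would compute each constant by taking the full trace of both sides with $\delta^{ij}$, $\delta^{ij}\delta^{kl}$, and $\delta^{ij}\delta^{kl}\delta^{pq}$ respectively, and using the defining identity $\sum_i (x^i)^2=1$ on $\mathbb{S}^n$. In all three cases the left-hand side collapses to $\int_{\mathbb{S}^n}1\,d\mu=|\mathbb{S}^n|$. On the right-hand side, the rank-$2$ trace produces $n+1$; the rank-$4$ trace produces $(n+1)^2+2(n+1)=(n+1)(n+3)$; and the rank-$6$ trace produces, as I verify next, $(n+1)(n+3)(n+5)$. Dividing by these quantities yields exactly the constants claimed.

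\textbf{The combinatorial step.} The only step that is not completely mechanical is the rank-$6$ trace, where the $15$ delta products have to be contracted against $\delta^{ij}\delta^{kl}\delta^{pq}$. The cleanest bookkeeping is to regard each summand as a perfect matching on $\{i,j,k,l,p,q\}$; its contraction against the ``outer'' matching $(ij)(kl)(pq)$ equals $(n+1)^c$, where $c$ is the number of cycles in the superposition of the two matchings. A short enumeration (grouping the $15$ matchings by how many of their pairs coincide with those of the outer matching) shows that $1$ matching gives $3$ cycles, $6$ matchings give $2$ cycles, and $8$ matchings give $1$ cycle, so the total is
\[
(n+1)^3+6(n+1)^2+8(n+1)=(n+1)\bigl((n+1)+2\bigr)\bigl((n+1)+4\bigr)=(n+1)(n+3)(n+5),
\]
as needed. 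This cycle-counting is the main point of friction, but it is still elementary.

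\textbf{Alternative.} If avoiding the isotropic-tensor citation is preferred, a purely analytic backup is to apply the divergence theorem on $\mathbb{S}^n$ to polynomial vector fields of the form $x^{i_1}\cdots x^{i_k}\partial_{i_0}$; the identity $\int_{\mathbb{S}^n} \diver_{\mathbb{S}^n}(\cdot)\,d\mu=0$, combined with the relation between the Euclidean and spherical divergences on radially homogeneous fields, yields recursions that determine the monomial integrals of degree $2m$ in terms of those of degree $2m-2$, and these recursions are directly consistent with the stated formulas.
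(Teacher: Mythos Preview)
Your proof is correct. The paper actually states this lemma without proof (it is listed in the appendix as one of several standard auxiliary facts), so there is no argument to compare against; your approach via $O(n+1)$-invariance and tracing is the standard one, and your cycle-counting for the rank-$6$ contraction is accurate.
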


\bibliographystyle{amsplain}
\bibliography{Lit_new}
\end{document}